\def\BibTeX{{\rm B\kern-.05em{\sc i\kern-.025em b}\kern-.08em
    T\kern-.1667em\lower.7ex\hbox{E}\kern-.125emX}}
\newcommand{\bsDelta}{\boldsymbol{{\Delta}}} 
\newcommand{\bsbeta}{\boldsymbol{\beta}}
\newcommand{\bsx}{\mathbf{x}}
\newcommand{\bfx}{\mathbf{x}}
\newcommand{\bfxs}{\mathbf{x}^*}
\newcommand{\bshx}{\what{\mathbf{x}}}
\newcommand{\bsu}{\mathbf{u}} 
\newcommand{\bfu}{\mathbf{u}} 
\newcommand{\bfust}{\widetilde{\mathbf{u}}^*} 
\newcommand{\bshu}{\what{\mathbf{u}}}
\newcommand{\bshz}{\what{\mathbf{z}}}
\newcommand{\bsy}{\mathbf{y}} 
\newcommand{\bsO}{\mathbf{O}} 
\newcommand{\bsJ}{\mathbf{J}} 
\newcommand{\bsJtil}{\widetilde{\mathbf{J}}}
\newcommand{\bsM}{\mathbf{M}}
\newcommand{\real}{\mathbb{R}}
\newcommand{\mbf}[1]{\mathbf{#1}}
\newcommand{\bs}[1]{\boldsymbol{#1}}
\newcommand{\mcal}[1]{\mathcal{#1}}
\newcommand*{\QEDB}{\hfill\ensuremath{\square}}
\newcommand{\what}{\widehat}
\newcommand{\wtilde}{\widetilde}
\DeclareMathOperator*{\argmin}{arg\,min}
\newcommand{\Rank}{\operatorname{Rank}}
\newcommand{\transpose}{\mathsf{T}}
\newcommand{\bfzeros}{\mathbf{0}}
\newcommand{\vertiii}[1]{{\left\vert\kern-0.25ex\left\vert\kern-0.25ex\left\vert #1 
		\right\vert\kern-0.25ex\right\vert\kern-0.25ex\right\vert}}
\newcommand{\bfA}{\mathbf{A}}
\newcommand{\bfB}{\mathbf{B}}
\newcommand{\bfC}{\mathbf{C}}
\newcommand{\bfJ}{\mathbf{J}}
\newcommand{\sysmatS}{(\bfA, \bfB_S, \bfC)}
\newcommand{\bsPsi}{\boldsymbol{\Psi}}
\newcommand{\norm}[1]{\left\lVert#1\right\rVert}
\newtheorem{theorem}{\bf \emph{Theorem}}
\newtheorem{lemma}[theorem]{\bf \emph{Lemma}}
\newtheorem{definition}{Definition}
\newtheorem{corollary}[theorem]{Corollary}
\newtheorem{proposition}[theorem]{Proposition}
\newtheorem{assumption}[theorem]{Assumption}
\newtheorem{example}{Example}
\begin{document}
\title{Localization and Estimation of Unknown Forced Inputs: A Group LASSO Approach}
	\author{Rajasekhar~Anguluri,~\IEEEmembership{Member,~IEEE,}
		Lalitha~Sankar,~\IEEEmembership{Senior~Member,~IEEE,}
		and~Oliver~Kosut,~\IEEEmembership{Member,~IEEE}
		\thanks{This material is based upon work supported by the National Science Foundation  under  Grant  No.s  OAC-1934766. All the authors are with the School of Electrical, Computer, and Energy
			Engineering, Arizona State University, Tempe, AZ 85281 USA (e-mail:
			\{rangulur,lalithasankar,okosut\}@asu.edu).}}

\maketitle

\begin{abstract}
We model and study the problem of localizing a set of sparse forcing inputs for linear dynamical systems from noisy measurements when the initial state is unknown. This problem is of particular relevance to detecting forced oscillations in electric power networks. We express measurements as an additive model comprising the initial state and inputs grouped over time, both expanded in terms of the basis functions (i.e., impulse response coefficients). Using this model, with probabilistic guarantees, we recover the locations and simultaneously estimate the initial state and forcing inputs using a variant of the group LASSO (linear absolute shrinkage and selection operator) method. Specifically, we provide a tight upper bound on: (i) the probability that the group LASSO estimator wrongly identifies the source locations, and (ii) the $\ell_2$-norm of the estimation error. Our bounds explicitly depend upon the length of the measurement horizon, the noise statistics, the number of inputs and sensors, and the singular values of impulse response matrices.  Our theoretical analysis is one of the first to provide a complete treatment for the group LASSO estimator for linear dynamical systems under input-to-output delay assumptions. Finally, we validate our results on synthetic models and the IEEE 68-bus, 16-machine system. 
\end{abstract}


\begin{IEEEkeywords}
Forced oscillations, unknown input, group LASSO, invariant zeros, source localization, sparse estimation. 
\end{IEEEkeywords}

\section{Introduction}
\label{sec:introduction}
	Low-frequency oscillations in the electric transmission grid are indicative of the type of disturbance afflicting the system. 
	\emph{Natural oscillations}, with frequencies in between 0.1--0.2 Hz, are triggered by random load fluctuations and sudden network switching. In contrast, \emph{forced oscillations} (FOs), with frequencies in between 0.1--15 Hz, result from external inputs injected by malfunctioned or compromised devices, such as power system stabilizers (PSS), generator controllers and exciters, and cyclic loads etc. \cite{BW-KS:17}. FOs remain undamped for longer periods of time, and if not mitigated, they pose a greater risk to the power systems operation, potentially causing blackouts. 
	
	A popular and inexpensive method to mitigate FOs in power systems is to remove the source triggering these oscillations \cite{TH-NMF-PRK-LX:20, SCC-VP-KT:18, BW-KS:17}. This amounts to accurately locating the FO sources. As installing sensors at each potential source is expensive, recent research suggests using phasor measurement unit (PMU) measurements based source localization algorithms. These algorithms range from physics-based energy approaches to completely data-driven approaches \cite{BW-KS:17}; the latter, albeit their impressive performance on test cases, lack theoretical guarantees. This deficiency makes it harder to quantify the performance and limitations of measurement-based methods on what is and is not possible. 
	
	We address the lack of guarantees of existing data-driven approaches by posing the localization problem as a regularized optimization problem---referred to as the group LASSO estimator. The regularization term imposes sparsity constraints on the number of source locations, which is often the case in many practical systems, including power systems \cite{TH-NMF-PRK-LX:20, SM-BW-QZ-FM-XL-SK-EL:16}. The input to our optimization problem are the noisy measurements and dynamical system matrices. It returns the source locations and estimates of unknown initial state and inputs (oscillatory or not) injected by these sources. Formally, we consider
	\begin{align}\label{eq: intro_opt}
	\hspace{-1.0mm}\underbrace{\begin{bmatrix}
	    \what{\mbf{x}}_0 \\ \what{\bsu} 
	\end{bmatrix}}_{\what{\bsbeta}}\!\in \!\argmin_{\substack{\mbf{x}_0,\{\bsu_j\}_{j=1}^m
		}}\norm{\mbf{y}\!-\!\mbf{O}\mbf{x}_0\!-\!\sum_{j=1}^{m}\mbf{J}_j\bsu_j}^2_2 \!+\!\lambda \sum_{j=1}^m\norm{\bsu_{i}}_2,
	\end{align}
	where $\mbf{u}_j =[u_j[0],\ldots,u_j[N]]^\transpose$ is a vector of inputs injected by the $j^{th}$ source, $j \in \{1,\ldots,m\}$, over a discrete time horizon $\mathcal{D}\triangleq \{0,\ldots,N\}$; $\mbf{y}$ is the noisy batch measurements collected over $\mathcal{D}$ from multiple sensors; $\mbf{O}$ and $\mbf{J}_j$ are the observability and forced impulse response matrices, resp; and $\lambda\geq 0$ is the tuning parameter. Let $\bsbeta^*=(\mbf{x}^*_0,\bsu^*_1,\ldots,\bsu^*_m)$ be the unknown ground truth and
	$S\triangleq \{j: \bsu_j^*\ne 0\}\subset \{1,\ldots,m\}$ be the set of active sources. By sparse number of sources, we mean $|S|=m^*\ll m$. Defining $\widehat{S}\triangleq \{j: \widehat{\bsu}_j\ne 0\}$, where $\widehat{\bsu}$ as the estimate, we show that $\widehat{S}=S^*$ and $\|\widehat{\bsbeta}-{\bsbeta}^*\|_2\leq \epsilon$, for any $\epsilon >0$, hold with high probability.

	
In the context of regression models, including linear, logistic, and functional models, a rich literature exists on quantifying the theoretical performance of the group LASSO estimator and its variants; see \cite{KL-MP-SG-ABT:39, NS-RT:12, ML-SVD-PB:08}. However, these works assume $\mbf{J}_j$ and $\mbf{O}$ to be random or to satisfy rather restrictive assumptions, either of them may not hold for $\mbf{J}_j$ and $\mbf{O}$ obtained from linear dynamical systems. Further, $\mbf{J}_j$ associated with the non-zero input $\mbf{u}^*_j$ could be rank deficient, especially if the underlying linear dynamical system is only $d$-\emph{delay} left invertible\footnote{A dynamical system is said to be $d$-delay left invertible if $u_j[k]$ can be uniquely determined from noise-less measurements $\{y[k], y[k+1], · · · , y[k+d]\}$.} \cite{SK-HP-EZ-DSB:11}; this in turn
eliminates the strict convexity property of the objective in \eqref{eq: intro_opt}. As a result, there may exist multiple optimal solutions ($\what{\bsbeta}$); hence, it is not clear if $\what{S}$ is common for all these solutions. We address all these issues by imposing physically meaningful assumptions on $\mbf{O}$ and $\mbf{J}_i$. 
	
	Going beyond the motivating example of forced oscillations in electric power systems, the problem setup in \eqref{eq: intro_opt} is general and the formal results in this paper can be used to localize and reconstruct sparse inputs for a variety of practical engineering systems modeled as linear dynamical systems.  
	
	

	
%
%
%

		\textit{Paper Contributions}:  
		The problem we introduce in \eqref{eq: intro_opt} is distinct from state of the art regularized based optimization methods in seeking to localize inputs and estimate initial state using sufficiently delayed measurements over a block of time. For this model, our main contributions as follows.
	\begin{enumerate}
		\item Our first result is in deriving sufficient conditions under which the following hold with high probability: (i) the estimation error in the $\ell_2$-sense is bounded, and (ii) the localized sources match the true sources. A key contribution is that despite the rank deficiency of model matrices, we guarantee that the group LASSO can localize the sources correctly. For rank deficient matrices $\mbf{J}_i$, we provide estimation guarantees for the delayed inputs (see Section \ref{sec: delayed estimation}). Our result hinges on introducing and thresholding a \emph{mutual incoherence condition} (MIC) on the augmented $\mbf{O}$ and $\mbf{J}_i$ matrices.
		\item The time-domain MIC condition we introduce requires computing correlations among $\mbf{O}$ and $\mbf{J}_i$. This operation is computationally hard especially for large system dimension and estimation horizon $N$. To tackle this hurdle,  we upper bound the time-domain MIC with a frequency-domain MIC. Interestingly, the latter MIC is a sufficient condition if we were to consider a LASSO estimator in the frequency-domain. We also establish a relationship between the performance of the proposed group LASSO approach and the absence of invariant zeros for the sub-system excited by non-zero inputs, and thresholding the frequency domain MIC. 
\item We validate the group LASSO estimator's performance on synthetic data and the IEEE 68-bus, 16-machine system. We implement our estimator using the Alternating Direction Method Multipliers (ADMM) method \cite{SELP09}. 
	\end{enumerate}  
	
\smallskip 
	\textit{Related Literature}: In the context of power systems, model based-approaches, e.g., energy dissipation methods based on frequency domain data and statistical signal processing methods based on AR and ARMA models, are commonly used to localize unknown forced oscillatory inputs. Reference \cite{SC:PV:KT-19} proposes a Bayesian approach to localize sources based on the generators frequency response functions. In \cite{UA:JWP:JF:DD:DT:MD-17}, the pseudo-inverse of a set of system transfer functions are multiplied by a vector of PMU measurements to yield an FO solution vector. In \cite{NZ-MG-SA:17}, the authors leverage the properties of magnitude and phase responses of transfer functions between different buses to identify possible oscillation sources. Instead, completely data-driven methods include the use of novel machine learning methods on the multivariate PMU time series data in \cite{YM-ZY-NL-DS:21} and the robust PCA method in \cite{TH-NMF-PRK-LX:20}, which exploits the low-rank nature of PMU data and also the sparsity of the locations.

%
%
%
	
More broadly, there is a growing research on source localization and unknown state and sparse input reconstruction in dynamical systems based on sparsity constrained optimization methods. The problem of source location in the context of attacks on inputs and sensors is studied in \cite{HF-PT-SD:14} and \cite{fd-fp-fb:12b}. However, these works work with noise-free measurements and do not focus on unknown input estimation. In \cite{MBW-BMS-TLV:10, GJ-RCM:19}, by assuming the knowledge of inputs and using randomly sampled measurements, the authors obtained sample complexity (bounds on the number of measurements) results for reconstructing the initial state with sparsity constraints. 
Instead, the authors in \cite{SS-NJC-RV:15} and \cite{SMF-FG-SG-AYK-DS:19}, consider sparse input and non-sparse state reconstruction using batch wise noise-less measurements and sequential noisy measurements, respectively. However, these works do not address location recovery guarantees for the unknown sources and the initial state. Finally, we acknowledge works in \cite{fp-fd-fb:10y,ML-DG-ZW-HX:19}, where the authors used banks of input observers based residual generation methods to identify source locations in noise-free systems---albeit with strong assumptions, as they do not consider sparse inputs. 

In contrast to these works, we consider a unified framework, based on a LASSO method, to jointly locate the sources, and estimate the sparse inputs along with the unknown initial state. As highlighted in several other non-sparsity based input identification methods \cite{SY-MZ-EF:16,fp-fd-fb:10y,SS-CNH:11}, our results also highlight the role of invariant zeros for sparse input recovery.

\textit{Mathematical Notation}: We denote the vectors and matrices are by boldface lower case and upper case letters. Denote the $d\times d$ identity matrix by $\mbf{I}_d$. Denote the pseudoinverse of $\mbf{X}$ by $\mbf{X}^\dagger$. The rangespace of $\mbf{X}$ is defined by $\mathcal{R}(\mbf{X})=\{\mbf{X}\mbf{z}: \mbf{z} \in \mathbb{R}^m\}$. Given $S \subset \{1,\ldots,m\}$ and $\mbf{x}\in \mathbb{R}^m$, we write $\mbf{x}_S$ for the sub-vector of $\mbf{x}$ formed from the entries of $\mbf{x}$ indexed by $S$. Similarly, we write $\mbf{M}_S$ for the submatrix of $\mbf{M}$ formed from the columns of $\mbf{M}$ indexed by $S$. For $1\leq p<\infty$ and the vector $\mbf{x}=[x_1,\ldots,x_m]$, denote $\norm{\mbf{x}}_p=(\sum_{i=1}^m|x_i|^p)^{1/p}$. Instead, $\norm{\mbf{u}}_\infty=\max_{l}|u_l|$. The $\ell_{a,b}$-mixed-norm, with $a,b \geq 0$, of $\mbf{z}=[\mbf{z}_1^\transpose, \ldots, \mbf{z}_r^\transpose]^\transpose$ is given by $\norm{\mbf{z}}_{a,b}^b=\sum_{j=1}^r\|\mbf{z}_j\|^b_a$. By convention, $\norm{\mbf{z}}_{a,0}\triangleq \sum_{j=1}^{r}I(\|\mbf{z}_j\|_a\ne 0)$, where $I(\cdot)$ is the indicator function, counts the number of non-zero vectors. For a positive integer $m$, we denote $[m]=\{1,\ldots,m\}$. 
		

\section{Problem Setup and Preliminaries}\label{sec: problem_setup}
For a sampled system, we obtain a linear relation between the batch measurements and the initial state and forced inputs. We then formulate a group LASSO optimization problem for the above model to estimate the initial state and inputs, and to locate the unknown sources. 

\vspace{-3.0mm}
	\subsection{Linear dynamics under sparse forced inputs} 
Consider the following continuous-time linear system subjected to external inputs: 
	\begin{align}\label{eq: CT-system}
		\dot{\mbf{x}}_c(t)&=\mbf{A}_c\mbf{x}_c(t)+\mbf{B}_c\mbf{u}^*_c[t], \quad t \in \mathbb{R}, 
	\end{align}
	where $\bs{\Delta}\mbf{x}_c(t)\in \mathbb{R}^n$ and $\mbf{u}^*_c[t]\in \mathbb{R}^{m}$ is the state and input. We assume the input to be sparse, that is $\|\mbf{u}^*_c(t)\|_0\leq m^*<<m$ for all $t\in \mathbb{R}$. In the context of power systems, the state $\bs{\Delta}\mbf{x}_c(t)$ consists of the dynamical states of generators and their control systems, including rotor angles, speed deviations, field excitation voltage, etc. Instead, $\mbf{u}^*_c(t)=[{u}^*_{c,1}(t),\ldots,{u}^*_{c,m}(t)]^\transpose$ is the vector of inputs triggered by the sources of FOs, among which only $m^*$ locations are active. However, our model in \eqref{eq: CT-system}, except for sparsity constraints, is general and allows for multi-dimensional un-modeled exogenous stochastic or deterministic disturbances,  benign faults, or adversarial attacks. 
	
We consider the discrete-time dynamics of \eqref{eq: CT-system} together with a measurement equation: 
\begin{align}
    \mbf{x}[k+1]&=\mbf{A}  \mbf{x}[k]+\mbf{B}\mbf{u}^*[k]\label{eq: DT-system-state}, \\
    \mbf{y}[k]&=\mbf{C}	\mbf{x}[k]+\mbf{v}[k], 	 \quad k=0,1,\ldots, \label{eq: DT-system-measurements}
\end{align}
where $\mbf{A}=e^{\mbf{A}_c\delta t}$, $\mbf{B}=(\int_{0}^{\delta t}e^{\mbf{A}_c\tau}d\tau){\mbf{B}_c}$, and $\delta t$ is the sampling time period, and $\mbf{u}^*[k]=[u_1[k],\ldots,u_{m}[k]]^\transpose$. Further, $\mbf{y}[k]\!=\![y_1[k],\ldots,y_p[k]]^\transpose \!\in\! \mathbb{R}^p$ is the measurement, $\mbf{v}[k]\overset{\text{iid}}{\sim} \mcal{N}(\mbf{0},\sigma^2 \bf{I})$ is noise, and $\mbf{C}\!\in\! \mathbb{R}^{p\times n}$ is the sensor matrix. In Section IV, we consider dynamics in \eqref{eq: DT-system-state} with process noise, and also relax the diagonal covariance assumption on $\mbf{v}[k]$. 

Let $S=\{j: {u}_j[k]\ne 0 \text{ for at least one } k\geq 0\}\subset [m]$ and $S^c=[m]\setminus S$. We refer $S$ and $S^c$ to as the active and inactive set. Partition $\mbf{B}$ as $\mbf{B}=[\mbf{B}_S\,\mbf{B}_{S^c}]$ and $\mbf{u}[k]=[\mbf{u}^\transpose_S[k]\,\, \mbf{u}_{S^c}^\transpose[k]]^\transpose$, with $\mbf{u}^*_{S^c}[k]=[u^*_{i_1}[k],\ldots,u^*_{i_r}[k]]$ and  $\mbf{B}_{S^c}=[\mbf{b}_{i_1},\ldots,\mbf{b}_{i_r}]$, where $i_r \in S^c$ and $r=|S^c|=m-m^*$. Similarly, define $\mbf{u}^*_S[k]$ and $\mbf{B}_S$. Then, the input term in \eqref{eq: DT-system-state} can be written as 
	\begin{align}\label{eq: input representation}
	\begin{split}
	    	    \mbf{B}\mbf{u}^*[k]=\sum_{j=1}^{m}\mbf{b}_j{u}^*_j[k]&=\sum_{j\in S}\mbf{b}_j{u}^*_j[k]+\sum_{j\in S^c}\mbf{b}_j{u}^*_j[k]\\
	    &=\mbf{B}_S\mbf{u}^*_S[k]+\mbf{B}_{S^c}\mbf{u}^*_{S^c}[k].
	\end{split}
	\end{align}
The above representations will play a key role in formulating our group LASSO problem in Section \ref{subsec: group lasso problem}. 
	
	Using \eqref{eq: DT-system-state}-\eqref{eq: DT-system-measurements}, we express the batch measurements
	$\mbf{y}$ (see below) as a linear model with added noise. Define the vectors 
	\begin{align}\label{eq: time collected vectors}
		\mbf{y}\!=\!\begin{bmatrix}\mbf{y}[0] \\ \vdots \\ \mbf{y}[N]\end{bmatrix}, 
		\mbf{v}\!=\!\begin{bmatrix}\mbf{v}[0] \\ \vdots \\ \mbf{v}[N] \end{bmatrix}, \text{ and } 
		\mbf{u}^*_j\!=\!\begin{bmatrix}u^*_j[0] \\ \vdots \\ u^*_j[N] \end{bmatrix},
	\end{align}
	where $\mbf{y}$, $\mbf{v}\in \mathbb{R}^{p(N+1)}$ and $\mbf{u}^*_j \in \mathbb{R}^{N+1}$, for all $j \in S^c$. Here, $N+1$, with $N>0$ is the length of the estimation horizon. We also define the observability matrix $\mbf{O}\in \real^{p(N+1)\times n}$ and the impulse response matrix $\mbf{J}_j\in \real^{p(N+1)\times N+1}$ as
	\begin{align}\label {eq: system matrices}
		\begin{split}
			\hspace{-2.5mm}	\mbf{O}\!=\!\begin{bmatrix}
				\bfC\\	\bfC\bfA \\ \bfC\bfA^2 \\ \vdots \\ \bfC\bfA^{N}
			\end{bmatrix};
			{{\bfJ}}_j\!=\!
			\begin{bmatrix}
				\mbf{H}^{(j)}_0 & \bf{0} & \bf{0} & \ldots & \bf{0}\\
				\mbf{H}^{(j)}_1 & \mbf{H}^{(j)}_0 & \mbf{0} & \ldots & \bf{0}\\
				\mbf{H}^{(j)}_2 & \mbf{H}^{(j)}_1 & \mbf{H}^{(j)}_0 & \ldots & \bf{0}\\
				\vdots & \vdots & \ddots & \ddots & \vdots\\
				\mbf{H}^{(j)}_{N} & \mbf{H}^{(j)}_{N-1} & \ldots & \mbf{H}^{(j)}_{1} & \mbf{H}^{(j)}_{0}\\
			\end{bmatrix},
		\end{split}
	\end{align} 
	where $j \in S\cup S^c$, and the $l$-th impulse response (Markov) parameter, $\mbf{H}^{(j)}_l\in \mathbb{R}^{p\times 1}$, at the $j$-th location is defined as 
	\begin{align}\label{eq: markov parameters}
		\mbf{H}^{(j)}_l := \left\{\begin{array}{lr}
			\mbf{0} & \text{if } l=0,\\
			{\bfC \bfA^{l-1}\mathbf{b}_j} & \text{if } l\geq 1.
		\end{array}\right.
	\end{align}
	
	Let $\mbf{x}[0]=\mbf{x}^*_0$ be the unknown initial state. From \eqref{eq: DT-system-state}-\eqref{eq: DT-system-measurements} and the fact that $\mbf{B}\mbf{u}^*[k]=\sum_{j=1}^{m}\mbf{b}_j{u}^*_j[k]$, we observe that 
	\begin{align}\label{eq: lasso measurement form}
	\mbf{y}=\mbf{O}\mbf{x}_0^*+\sum_{j=1}^m\mbf{J}_j\mbf{u}_j^*+\mbf{v},
	\end{align}
	where $\mathbf{v}\sim \mathcal{N}(\mbf{0},\sigma^2\mathbf{I}_{p(N+1)})$, $\mbf{u}^*_j$ is in \eqref{eq: time collected vectors} and $\mbf{J}_j$ is in \eqref{eq: system matrices}. 

	\subsection{Initial State and Unknown Input Estimation under Sparsity Constraints: A Group LASSO for Approach}\label{subsec: group lasso problem}
	Based the measurement model in \eqref{eq: lasso measurement form}, we introduce the group LASSO estimator to estimate $(\mbf{x}_0^*,\mbf{u}_1^*,\ldots,\mbf{u}_m^*)$ and also the active set $S$. Let $\mbf{J}=[\mbf{J}_1^\transpose,\ldots,\mbf{J}_m^\transpose]$ and 
	$\mbf{u}=[\mbf{u}_1^\transpose,\ldots,\mbf{u}_m^\transpose]$, where $\mbf{u}_j \in \mathbb{R}^{N+1}$. Recall the definition of $\ell_{p,0}$-norm from the notation section, and consider
 	\begin{align}\label{eq: subset selection problem}
	\hspace{-2.7mm}\begin{bmatrix}\what{\bsx}_0 \\ \what{\bsu}\end{bmatrix} = \argmin_{\substack{\bsx_0, \mbf{u}}}\left\lbrace\frac{1}{2T}\norm{\mbf{y}-\mbf{O}\bsx_0-\mbf{J}\bsu}_2^2\!+\!\lambda_{T} \|\mbf{u}\|_{p,0}\right\rbrace, 
	\end{align}
	where the regularization parameter $\lambda_T\!\geq\!0$ and $T=p(N+1)$ is the dimension of $\mbf{y}$ in \eqref{eq: lasso measurement form}. The above problem is called subset (or block-column) selection problem because the optimization problem amounts to finding $\mbf{J}_j$ that contributes to $\mbf{y}$ in \eqref{eq: lasso measurement form}. 


 


	

	
	Unfortunately, \eqref{eq: subset selection problem} is a combinatorial optimization problem and its computationally complexity is exponential in $m$. We circumvent this difficulty by replacing the $\|\mbf{u}\|_{p,0}$ with the $\|\mbf{u}\|_{p,1}$-norm. This is a common relaxation technique widely used in the literature of compressed sensing and statistics; see \cite{YCE:PK:HB-10, MJW:09}. Thus, we end up with the group LASSO problem:
	\begin{align}\label{eq: group LASSO opt}
		\hspace{-2.7mm}	\begin{bmatrix}\what{\bsx}_0 \\ \what{\bsu}\end{bmatrix} \!\in \!\argmin_{\substack{\bsx_0, \bsu }}\left\lbrace\frac{1}{2T}\norm{\mbf{y}-\mbf{O}\bsx_0-\mbf{J}\bsu}_2^2\!+\!\lambda_{T} \|\mbf{u}\|_{p,1}\right\rbrace.
	\end{align}
	
	
	For definiteness, we set $p=2$, although our analysis extends to the case $p\ne 2$. In the literature, $\|\mbf{u}\|_{2,1}=\sum_{j=1}^m\norm{\bsu_{j}}_2$ is referred to as the block or group norm. Our optimization problem in \eqref{eq: group LASSO opt} differs from the traditional group LASSO \cite{NS-RT:12} because we do not penalize $\mbf{x}_0$. This is subtle yet important distinction because in many applications, including power systems, initial state is rarely sparse.  In Section VI, we provide details on how to numerically solve \eqref{eq: group LASSO opt}. Instead, in Section III, for a specific range of $\lambda_T$, we show that the group-norm based regularizer promotes group sparsity in $\widehat{\mbf{u}}$ and that $\what{S}=S$ holds with high probability, where $\what{S}\triangleq\{{j}: \widehat{\mbf{u}}_j\ne 0\}$.

	Due to the presence of additive noise in the measurement vector $\mbf{y}$ in \eqref{eq: lasso measurement form}, neither the estimate 
	$\what{\bsbeta}=(\what{\bsx}_0, \what{\bsu})$ in \eqref{eq: group LASSO opt} need to identically match $\bsbeta^*=(\bsx_0^*, \bsu^*)$ nor does $\what{S}=S$. Thus, we evaluate the quality of our estimates (i.e., the hatted quantities) in a probabilistic sense using the error metrics: 
	\begin{itemize}
		\item $\what{\bsbeta}$ is said to be $\ell_2$-consistent if  $\|\what{\bsbeta}-\bsbeta^*\|_2 \leq o(T)$ with probability at least $1-c_1\exp{(-c_2T)}$, for some $c_1,c_2\!>\!0$. 
		\item $\what{\bsu}$ is said to be location recovery consistent if $\what{S}\!=\! S$ with probability at least $1-c_3\exp(-c_4T)$, for $c_3,c_4>0$. 
	\end{itemize}
	Here $o(T)$ implies that the upper bound on the error tends to zero as $T\to \infty$. 
	The $\ell_2$-error bound ensures that the estimate $\what{\bsbeta}\approx \bsbeta^*$ by increasing $T=p(N+1)$. Instead, the location selection consistency ensures that as as long as $T$ is sufficiently large, $\what{S}$ correctly identifies the true sources of FOs. 
	
\section{Delayed Estimation and Invariant Zeros}\label{sec: delayed estimation}
In this section we cull recent results on the initial state and delayed input recovery using finite number of measurements \cite{AA-DSB:19}, by assuming the knowledge set $S$. These results provide a starting point to prove our main results in Section \ref{sec: main results}. 

We begin by expressing $\mbf{y}$ in \eqref{eq: lasso measurement form} in a slightly different way. From \eqref{eq: input representation}, we have $\mbf{B}\mbf{u}^*[k]=\mbf{B}_S\mbf{u}^*_S[k]+\sum_{j\in S^c}\mbf{b}_j{u}^*_j[k]$. Substituting this fact in \eqref{eq: DT-system-state} and recursively expanding $\mbf{y}[k]$ in \eqref{eq: DT-system-measurements} yields us the following model for $\mbf{y}$ defined in \eqref{eq: time collected vectors}.
	\begin{align}\label{eq: measurement model1*}
	\begin{split}
	    	\mbf{y}&=\underbrace{\begin{bmatrix}
	                \mbf{O} & \mbf{J}_S
	            \end{bmatrix}}_{\triangleq \bs{\Psi}_S}\underbrace{\begin{bmatrix}
	                \mbf{x}^*_0\\
	                \mbf{u}^*_S
	            \end{bmatrix}}_{\triangleq \bsbeta^*_S}+\sum_{j\in S^c}{\mbf{J}}_j\bsu^*_j+\mbf{v},
	\end{split}
	\end{align} 
where $\mbf{u}^*_S$ and
$\mbf{J}_S\in \real^{p(N+1)\times m^*(N+1)}$ are defined as 
	\begin{align}\label{eq: system matrices1}
		\begin{split}
        \hspace{-3.0mm}\mbf{u}^*_S\!=\!\begin{bmatrix}
		\mbf{u}^*_S[0]\\\mbf{u}^*_S[1] \\ \mbf{u}^*_S[2] \\ \vdots \\ \mbf{u}^*_S[N]
			\end{bmatrix};
			{{\bfJ}}_S\!=\!
			\begingroup 
\setlength\arraycolsep{1.5pt}\begin{bmatrix}
				\mbf{H}^{(S)}_0 & \bf{0} & \bf{0} & \ldots & \bf{0}\\
				\mbf{H}^{(S)}_1 & \mbf{H}^{(S)}_0 & \mbf{0} & \ldots & \bf{0}\\
				\mbf{H}^{(S)}_2 & \mbf{H}^{(S)}_1 & \mbf{H}^{(S)}_0 & \ldots & \bf{0}\\
				\vdots & \vdots & \ddots & \ddots & \vdots\\
				\mbf{H}^{(S)}_{N} & \mbf{H}^{(S)}_{N-1} & \ldots & \mbf{H}^{(S)}_{1} & \mbf{H}^{(S)}_{0}\\
			\end{bmatrix}\endgroup ,
		\end{split}
	\end{align} 
	with $\mbf{H}_0^{(S)}=\mbf{0}_{p\times m^*}$ and $\mbf{H}^{(S)}_l={\bfC \bfA^{l-1}\mathbf{B}_S}$, for all $l\geq 1$. Note that $\mbf{y}$ in \eqref{eq: lasso measurement form} and \eqref{eq: measurement model1*} are exactly the same. Importantly, $\mbf{u}_S^*$ in \eqref{eq: system matrices1} is a concatenation of inputs $\mbf{u}^*_S[k]$ associated with $S$ from $k=0$ (top) to $N$ (bottom), but not a concatenation of $\mbf{u}_j^*$ in \eqref{eq: time collected vectors}, for all $j \in S$. 
	
To show that the group LASSO is location recovery consistent, or $\what{S}=S$ holds with high probability,  $\bs{\Psi}_S=\begin{bmatrix}
	\mbf{O} & \mathbf{J}_S
\end{bmatrix}$ in \eqref{eq: measurement model1*} should be of full column rank. To see this, suppose that $\sigma^2\approx 0$ and that we know $S$. Then, by substituting $\mbf{u}_j^*=\mbf{0}$, for all $j \in S^c$, and $\mbf{v}=\mbf{0}$ in  
$\mbf{y}$ in \eqref{eq: measurement model1*}, it follows that 
	\begin{align}\label{eq: noiseless partitioned measurements}
	\begin{split}
		\mbf{y}&= \bs{\Psi}_S\bsbeta^*_S. 
	\end{split}
\end{align}
Thus for a rank deficient $\bs{\Psi}_S$, we cannot perfectly recover $\bsbeta^*_S=(\mbf{x}_0^*, \mbf{u}_S^*[0],\ldots,\mbf{u}^*_S[N])$ even with noise-free measurements and with the knowledge of $S$. However, unfortunately, unlike the model matrices, such as random design and Fourier basis matrices, considered in signal processing and statistics applications, $\bs{\Psi}_S$ could be rank deficient. This is so because
	system in \eqref{eq: DT-system-state}-\eqref{eq: DT-system-measurements} may not be initial state and input observable \cite{SK-HP-EZ-DSB:11}; that is, either $\mbf{O}$ or $\mbf{J}_S$ is rank deficient, or both $\mbf{O}$ and $\mbf{J}_S$ have full ranks, but $[
	\mbf{O} \,\, {\mathbf{J}}_S]$ is rank deficient. 
	
From the foregoing discussion, it is clear that recovering $\bsbeta_S^*$ and full rank of $\bsPsi_S$ are intimately connected. Interestingly, for $d$-delay invertible linear systems, 
even when $\bsbeta_S^*$ is not recoverable, a portion of it is perfectly recoverable \cite{SK-HP-EZ-DSB:11, AA-DSB:19}. In fact, we can recover $\bsbeta_{S,[0:N-d]}^*=(\mbf{x}^*_0,\mbf{u}_S^*[0],\ldots,\mbf{u}_{S,[N-d]})$, where $N\geq d$, from $\mbf{y}^\transpose=[\mbf{y}^\transpose[0],\ldots, \mbf{y}^\transpose[N]]$
Here, $d\geq 0$ is called \emph{delay} and we refer $\bsbeta^*_{S,[0:N-d]}$ to as the delayed input. As a result, we show that a specific sub-matrix of  $\bsPsi_{S}$ has full column rank even when $\bsPsi_{S}$ is rank deficient.

We formalize the notion of $d$-delay. Let $\mbf{x}_0\!=\!\mbf{0}$ to note that $\bsPsi_S\!=\!\mbf{J}_S$ and $\bsbeta^*_S\!=\!\mbf{u}^*_S$. Substituting $\mbf{J}_S$ \eqref{eq: system matrices1} in \eqref{eq: noiseless partitioned measurements}, yields 
\begin{align}
	\underbrace{\begin{bmatrix}
		\mbf{y}[0]\\
		\mbf{y}[1]\\
		\vdots \\
		\mbf{y}[N]
	\end{bmatrix}}_{\mbf{y}_N}&\!=\!\underbrace{\left[\begin{array}{c|ccc}
\mbf{H}^{(S)}_0 & \mbf{0} & \ldots & \mbf{0}\\
\hline 
\mbf{H}^{(S)}_1  & \mbf{H}^{(S)}_0 & \ldots & \mbf{0}\\
\vdots & \ddots & \ddots & \vdots \\
\mbf{H}^{(S)}_N & \mbf{H}^{(S)}_{N-1} & \ldots & \mbf{H}^{(S)}_0 
\end{array}
\right]}_{\triangleq \bfJ_{S,[N:0]}}
\underbrace{\begin{bmatrix}
	{\mbf{u}}^*_S[0]\\
	\hline 
	{\mbf{u}}^*_S[1]\\
	\vdots \\
	{\mbf{u}}^*_S[N]
\end{bmatrix}}_{{\mbf{u}}^*_{S,[0:N]}}.\label{eq: expanded l measurements1}
\end{align}
Notice that $\mbf{J}_S=\mbf{J}_{S,[N:0]}$ and $\mbf{u}^*_S=\mbf{u}^*_{S,[0:N]}$. Define 
\begin{align}
   \mbf{J}_{S,[N:0]} =\left[\begin{array}{c|c|cc}
	\mbf{M}^{(S)}_N & \mbf{M}^{(S)}_{N-1} & \ldots & \mbf{M}^{(S)}_{0}
\end{array}
\right],\label{eq: expanded l measurements2}
\end{align}
where $\mbf{M}^{(S)}_l$ denotes the $l^{th}$ block column of $\mbf{J}_{S,[N:0]}$ labeled right $(l=0)$ to left $(l=N)$. By construction $\bfJ_{S,[N:0]}$ is rank deficient because $\mbf{H}^{(S)}_0=\mbf{0}$ in \eqref{eq: expanded l measurements1}. Thus, we cannot recover ${\mbf{u}}^*_S[N]$ using $\mbf{y}_N$. Further, in several practical applications, $\mbf{H}^{(S)}_1=\bfC\bfB_S=\bfzeros$ (or has non-full column rank). This is because sensors may not be located at the inputs. For e.g., in power systems, bus level PMUs do not directly measure PSS's output. Thus it is impossible to recover ${\mbf{u}}^*_S[N-1]$ using $\mbf{y}_N$. 


\begin{definition}{\bf \emph{(System delay)}}\label{def: system delay} For a non-negative integer $d\geq 0$, let $\mbf{{J}}_{S,[d:0]}$ be defined as in \eqref{eq: expanded l measurements1}. System in \eqref{eq: DT-system-state}-\eqref{eq: DT-system-measurements}, with $\mbf{x}_0^*=\mbf{0}$, $\mbf{u}_{S^c}^*=\mbf{0}$ and $\sigma^2=0$, is \emph{$d$-delay} left invertible if
\begin{align}\label{eq: minimum input delay}
\Rank (\mbf{{J}}_{S,[d:0]}) - \Rank (\mbf{{J}}_{S,[d-1:0]})=m^*, 
\end{align}
for $\mbf{{J}}_{S,[d:0]}$ defined in \eqref{eq: expanded l measurements1} and $m^*$ is the dimension of ${\mbf{u}}^*_S[k]$. The smallest $d$ that satisfies \eqref{eq: minimum input delay} is denoted as $\eta_S$. \QEDB
\end{definition}

\smallskip 
Throughout we assume $d=\eta_S$ and set $d\triangleq \infty$ if \eqref{eq: minimum input delay} does not hold for any $d\geq 0$. Suppose that $\eta_S<\infty$. Then, from the rank properties of partitioned matrices \cite{AA-DSB:19}, it follows that $\mbf{M}^{(S)}_d$ in \eqref{eq: expanded l measurements1} is full column rank. Thus, there exists a matrix $\mbf{S}$ such that $\mbf{S}\mbf{y}_d={\mbf{u}}^*_S[0]$. We may recover ${\mbf{u}}^*_S[1]$ using the residual $\what{\mbf{y}}_{d+1}\triangleq \mbf{y}_{d+1}-\mbf{M}^{(S)}_{d+1}{\mbf{u}}^*_S[0]$. In fact,  $\mbf{S}\what{\mbf{y}}_{d+1}={\mbf{u}}^*_S[1]$. By iterating this procedure, we can recover inputs in  ${\mbf{u}}^*_{S,[0:N-d]}\triangleq[(\mbf{u}^*_{S}[0])^\transpose,  \ldots, (\mbf{u}^*_{S}[N-d])^\transpose]^\transpose$ using $\mbf{y}_{N}$. 

\smallskip 
We relax $\mbf{x}_0^*=\bfzeros$ assumption and extend the rank condition in \eqref{eq: minimum input delay} to recover jointly $\bsbeta_{S,[0:N-d]}^*=(\mbf{x}^*_0,{\mbf{u}}^*_{S,[0:N-d]})$, as a whole rather than sequentially, using $\mbf{y}_N$. First, we define the smallest delay for recovering $\mbf{x}^*_0$ in the presence of input: 
	\begin{align}\label{eq: smallest index}
	\mu_S&\!\triangleq\! \min\{d \geq 0: \Rank ([
	\mbf{O}_d \,\, {\mbf{J}}_{S,[d:0]}])\!-\!\Rank (\bfJ_{S,[d:0]})\!=\!n\}, 
\end{align}
where $\mbf{O}_d=[\mbf{C}^\transpose \,\, (\mbf{C}\mbf{A})^\transpose \, \ldots, \, (\mbf{C}\mbf{A}^{d})^\transpose]$, and $n$ is the dimension of $\mbf{A}$. The rank condition in \eqref{eq: smallest index} says that $\mbf{O}_d$ has full column rank $(=n)$ and that the columns in $\mbf{O}_d$ are linearly independent of columns in ${\mbf{J}}_{S,[d:0]}$. This condition is stronger than system in \eqref{eq: DT-system-state}-\eqref{eq: DT-system-measurements} being observable, as shown below: 

\begin{example}
	Let $\mbf{A}=\begin{bmatrix}
		1 & 2;0 & 3
	\end{bmatrix}$, $\mbf{B}_S=\begin{bmatrix}
	2 & 3
\end{bmatrix}^\transpose$, and $\mbf{C}=\begin{bmatrix}
1 & 0
\end{bmatrix}$. Then $\eta_S=1$ and $\Rank \mbf{O}_l=2$, for any $\ell \geq 2$; that is, the system is observable. However, $\mu_S = \infty$ because the rank condition in \eqref{eq: smallest index} does not hold. This is to be expected because the second column of $\mbf{A}$ is identical to $\mbf{B}_S$. \QEDB
\end{example}

\smallskip 
Let $\mbf{M}_l^{(S)}$ be as in \eqref{eq: expanded l measurements2}. For $N\geq d \geq 0$, consider
\begin{align}\label{eq: Psi}
	\underbrace{\begin{bmatrix}
			\mbf{O} & \bsJ_{S}
	\end{bmatrix}}_{\bsPsi_S}&=\begin{bmatrix}
		\smash[b]{\underbrace{\begin{matrix}\mbf{O}& \mbf{M}^{(S)}_N &  \ldots & \mbf{M}^{(S)}_{d}\end{matrix}}_{\bsPsi_{S,[N:d]}}} & \smash[b]{\underbrace{\begin{matrix}\mbf{M}^{(S)}_{d-1} & \ldots & \mbf{M}^{(S)}_{0}\end{matrix}}_{\bsPsi_{S,[d-1:0]}}}
	\end{bmatrix}. 
\end{align}
Let $\bsPsi^+_S$ be the pseudo inverse of $\bsPsi_S$. The proposition below states conditions under which we can recover $(\mbf{x}_0^*, {\mbf{u}}^*_{S,[0:N-d]})$. 

\smallskip 
\begin{proposition}\label{prop: sub-matrix full rank}
Suppose that $\eta_S$ in \eqref{eq: minimum input delay} and $\mu_S$ in \eqref{eq: smallest index} are finite. Then, for $N\geq\max\{\eta_S,\mu_S\}$ with $d\geq \eta_S$, we have
	\begin{enumerate}
		\item $\bsPsi_{S,[N:d]}$ defined in \eqref{eq: Psi} has full column rank. 
		\item $\mathcal{R}(\bsPsi_{S,[N:d]})\cap \mathcal{R}(\bsPsi_{S,[d-1:0]})=\{0\}$. 
	\end{enumerate}
Moreover, for $t_S\triangleq (N-d+1)m^*$ and $m^*=|S|$, we have
\begin{align}\label{eq: Pi selection above}
	\begin{bmatrix}
		\bfxs_0\\
		\bfust_{S,[0:N-d]}
	\end{bmatrix}\!=\!&\underbrace{\begin{bmatrix}
			\mbf{I}_{n+t_S} & \mbf{0}_{(n+t_S)\times dm^*}
	\end{bmatrix}}_{\wtilde{\boldsymbol{\Pi}}_{S,[0:N-d]}}\bsPsi_S^+\mbf{y}. 
\end{align}
\end{proposition}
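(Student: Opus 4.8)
The plan is to convert the two rank assertions into statements about the noiseless trajectory $\mbf{y}=\bsPsi_S\bsbeta_S^*$ of \eqref{eq: noiseless partitioned measurements} and then read off \eqref{eq: Pi selection above} by a minimum-norm/pseudoinverse argument. I would fix the block split $\bsPsi_S=[\mbf{P}_1\;\mbf{P}_2]$ with $\mbf{P}_1\triangleq\bsPsi_{S,[N:d]}=[\mbf{O}\;\mbf{M}^{(S)}_N\cdots\mbf{M}^{(S)}_d]$ and $\mbf{P}_2\triangleq\bsPsi_{S,[d-1:0]}=[\mbf{M}^{(S)}_{d-1}\cdots\mbf{M}^{(S)}_0]$, and split $\bsbeta_S^*=(\bsbeta^*_{S,[0:N-d]},\,\mbf{u}^*_{S,[N-d+1:N]})$ conformally, so that $\mbf{y}=\mbf{P}_1\bsbeta^*_{S,[0:N-d]}+\mbf{P}_2\,\mbf{u}^*_{S,[N-d+1:N]}$.

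For claim (1) I would show $\mbf{P}_1$ has full column rank in three steps. (i) The $\mbf{J}$-block $[\mbf{M}^{(S)}_N\cdots\mbf{M}^{(S)}_d]$ equals the leading $N-d+1$ block columns of $\mbf{J}_{S,[N:0]}$; since $d\ge\eta_S$ and $N\ge\eta_S<\infty$, the sequential recovery argument following Definition~\ref{def: system delay} recovers $\mbf{u}^*_{S,[0:N-d]}$ from $\mbf{y}_N$, which forces $\Ker\mbf{J}_{S,[N:0]}$ into the span of the last $dm^*$ coordinate directions and hence makes those leading block columns linearly independent. (ii) $\mbf{O}$ has full column rank $n$ because its leading block-row submatrix $\mbf{O}_{\mu_S}$ already has rank $n$ by \eqref{eq: smallest index} and $N\ge\mu_S$. (iii) The crux is $\mathcal{R}(\mbf{O})\cap\mathcal{R}([\mbf{M}^{(S)}_N\cdots\mbf{M}^{(S)}_d])=\{\mbf{0}\}$: an element of the intersection means the output over $\{0,\dots,N\}$ generated by initial state $\mbf{a}$ and input $-[\mbf{c}^\transpose\;\mbf{0}^\transpose]^\transpose$ (which vanishes for $k>N-d$) is identically zero, and I would use the rank condition \eqref{eq: smallest index} --- which says the $\mbf{O}_d$-columns are independent of $\mathcal{R}(\mbf{J}_{S,[d:0]})$ --- propagated to horizon $N$ via the partitioned-rank lemmas of \cite{AA-DSB:19} to conclude $\mbf{a}=\mbf{0}$, and then $\mbf{c}=\mbf{0}$ by (i). Combining (i)--(iii) gives $\Rank\mbf{P}_1=n+t_S$.

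For claim (2), $\mathcal{R}(\mbf{P}_1)\cap\mathcal{R}(\mbf{P}_2)=\{\mbf{0}\}$ is equivalent to the rank-additivity $\Rank\bsPsi_S=\Rank\mbf{P}_1+\Rank\mbf{P}_2$, which I would check from Definition~\ref{def: system delay} and \eqref{eq: smallest index} with the same partitioned block-Toeplitz rank identities of \cite{AA-DSB:19}: a trajectory in $\mathcal{R}(\mbf{P}_2)$ is driven from the zero state by inputs supported only on $\{N-d+1,\dots,N\}$, and the $d$-delay structure prevents any nonzero such trajectory from also being produced by an initial state plus inputs on $\{0,\dots,N-d\}$. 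With (1) and (2) in hand, the formula follows: for any solution $\bsbeta=[\bsbeta_1^\transpose\;\bsbeta_2^\transpose]^\transpose$ of $\bsPsi_S\bsbeta=\mbf{y}$ we get $\mbf{P}_1\bsbeta_1+\mbf{P}_2\bsbeta_2=\mbf{P}_1\bsbeta^*_{S,[0:N-d]}+\mbf{P}_2\,\mbf{u}^*_{S,[N-d+1:N]}$, the trivial intersection forces $\mbf{P}_1\bsbeta_1=\mbf{P}_1\bsbeta^*_{S,[0:N-d]}$, and full column rank of $\mbf{P}_1$ gives $\bsbeta_1=\bsbeta^*_{S,[0:N-d]}=(\bfxs_0,\bfust_{S,[0:N-d]})$; applying this to the minimum-norm solution $\bsbeta=\bsPsi_S^+\mbf{y}$ and left-multiplying by $\wtilde{\boldsymbol{\Pi}}_{S,[0:N-d]}=[\mbf{I}_{n+t_S}\;\mbf{0}_{(n+t_S)\times dm^*}]$ yields \eqref{eq: Pi selection above}.

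The hard part will be step (iii) of claim (1) and the rank-additivity in claim (2): both require lifting the horizon-$\eta_S$ and horizon-$\mu_S$ rank conditions to the longer horizon $N$ and tracking the interaction of the shifted block-Toeplitz column spaces $\mathcal{R}(\mbf{O})$, $\mathcal{R}([\mbf{M}^{(S)}_N\cdots\mbf{M}^{(S)}_d])$, and $\mathcal{R}([\mbf{M}^{(S)}_{d-1}\cdots\mbf{M}^{(S)}_0])$, keeping careful account of where the all-zero block $\mbf{H}^{(S)}_0$ sits in each $\mbf{M}^{(S)}_l$; this is precisely the bookkeeping that the partitioned-rank machinery of \cite{AA-DSB:19} is built to carry out, so I expect the argument to reduce largely to invoking those results with the correct identifications.
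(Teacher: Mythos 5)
The paper does not actually prove Proposition~\ref{prop: sub-matrix full rank}: immediately after the statement it defers entirely to \cite[Theorem 7]{AA-DSB:19}, so there is no in-paper argument to compare against. Judged on its own, your reconstruction is sound and follows the route that citation is built on: split $\bsPsi_S=[\mbf{P}_1\;\mbf{P}_2]$, establish full column rank of $\mbf{P}_1$ and $\mathcal{R}(\mbf{P}_1)\cap\mathcal{R}(\mbf{P}_2)=\{0\}$, and then the derivation of \eqref{eq: Pi selection above} from (1) and (2) via the minimum-norm solution is complete and correct as you wrote it.

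The two steps you flag as "the hard part" can in fact be closed elementarily, without invoking the partitioned-rank machinery as a black box, by exploiting the block lower-triangular structure of $\mbf{J}_{S,[N:0]}$. For your step (iii) and for claim (2): if $\mbf{O}\mbf{a}\in\mathcal{R}(\mbf{J}_{S,[N:0]})$, then restricting to the first $\mu_S+1$ block rows (which, by lower triangularity, involve only the inputs $\mbf{u}_S[0],\dots,\mbf{u}_S[\mu_S]$) gives $\mbf{O}_{\mu_S}\mbf{a}\in\mathcal{R}(\mbf{J}_{S,[\mu_S:0]})$, and \eqref{eq: smallest index} then forces $\mbf{a}=\mbf{0}$; this is the whole "propagation to horizon $N$". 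With $\mbf{a}=\mbf{0}$, both step (iii) and the rank-additivity in claim (2) reduce to the statement that $\Ker\mbf{J}_{S,[N:0]}$ projects to zero on the first $(N-d+1)m^*$ coordinates, which is exactly the sequential-recovery consequence of $d\geq\eta_S$ that you already use in step (i) (note that recoverability of $\mbf{u}^*_{S,[0:N-d]}$ simultaneously yields full column rank of $[\mbf{M}^{(S)}_N\cdots\mbf{M}^{(S)}_d]$ and its trivial range intersection with $[\mbf{M}^{(S)}_{d-1}\cdots\mbf{M}^{(S)}_0]$, so claims (1) and (2) come out of one argument rather than two). The only loose end worth stating explicitly is that \eqref{eq: minimum input delay}, once satisfied at $d=\eta_S$, remains satisfied for all $d\geq\eta_S$, which justifies running the recovery with the hypothesis $d\geq\eta_S$ rather than $d=\eta_S$. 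With those observations written out, your proposal is a self-contained proof.
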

The proof of this fact is given in \cite[Theorem 7]{AA-DSB:19}. Part (1) of proposition states that the sub-matrix $\bsPsi_{S,[N:d]}$ has full rank even when $\bsPsi_{S,[N:0]}$ is rank deficient. This fact plays a vital role in the performance analysis of the group LASSO estimate. 

For Proposition \ref{prop: sub-matrix full rank} to hold, we require $\eta_S,\mu_S<\infty$. Using the notion of zeros and rank of the system matrix (see below), we state verifiable conditions to check if $\eta_S,\mu_S<\infty$. For all $z \in \mathbb{C}$, define the system and transfer matrix: 
\begin{align}
	\mathcal{Z}_S[z]&\triangleq \begin{bmatrix}
		z\mbf{I}-\bfA & -\mbf{B}_S\\
		\bfC & \mbf{0}
	\end{bmatrix} \text{ and } \label{eq: rosenbrock matrix}\\
	\mathcal{G}_{S}[z]&\triangleq\bfC(z\mbf{I}-\bfA)^{-1}\bfB_S, \quad z \notin \text{spec}(\mbf{A})\label{eq: TF matrix}, 
\end{align}
where $\text{spec}(\bfA)$ is the multiset of eigenvalues of $\mbf{A}$. Define the normal ranks of $\mathcal{Z}_S[z]$ and $\mathcal{G}_S[z]$, respectively, as $\text{nRank} \mathcal{Z}_S\triangleq \max_{z \in \mathbb{C}} \Rank \mathcal{Z}_S[z]$ and $\text{nRank} \mathcal{G}_S\triangleq \max_{z \in \mathbb{C}} \Rank \mathcal{G}_S[z]$.  A number $z_0\in \mathbb{C}$ is called the invariant zero of $\sysmatS$ if $\Rank \mathcal{Z}_S[z_0]< \text{nRank} \mathcal{Z}_S$. If $\sysmatS$ has invariant zeros, there exists $\mbf{u}^*_S\ne 0$ and $\mbf{x}_0\ne 0$ such that (noise-free) $y[k]=\mbf{0}$, for all $k\geq 0$ \cite{BDOA:MD-09}. (Thus, we cannot distinguish between non-zero and zero inputs from $\mbf{y}_N$ alone.) Hence, $\eta_S,\mu_S=\infty$.  

\begin{lemma}\label{lma: invariant zeros and conditions one and two}
Let $\sysmatS$ has no invariant zeros. Then, (i) $\mu_S<\infty$ and, for $N\geq \mu_S$, system in \eqref{eq: DT-system-state}-\eqref{eq: DT-system-measurements} is initial state observable; and (ii) if $\mathrm{nRank} \mathcal{Z}_S=n+m^*$, $\eta_S<\infty$.  
\end{lemma}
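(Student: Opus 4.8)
\emph{Proof plan.} The idea is to convert the spectral hypotheses on the Rosenbrock pencil $\mathcal{Z}_S[z]$ in \eqref{eq: rosenbrock matrix} into statements about the growth rate in $d$ of the ranks of the block‑Toeplitz matrix $\mathbf{J}_{S,[d:0]}$ of \eqref{eq: expanded l measurements1} and of $[\mathbf{O}_d\,\,\mathbf{J}_{S,[d:0]}]$. I would begin with the elementary reduction: for $z\notin\mathrm{spec}(\mathbf{A})$ the Schur complement of the $(1,1)$ block $z\mathbf{I}-\mathbf{A}$ in $\mathcal{Z}_S[z]$ equals $\mathcal{G}_S[z]$ of \eqref{eq: TF matrix}, so $\Rank\mathcal{Z}_S[z]=n+\Rank\mathcal{G}_S[z]$ outside a finite set, hence $\mathrm{nRank}\,\mathcal{Z}_S=n+\mathrm{nRank}\,\mathcal{G}_S$. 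In particular the hypothesis of part~(ii), $\mathrm{nRank}\,\mathcal{Z}_S=n+m^*$, is the same as $\mathrm{nRank}\,\mathcal{G}_S=m^*$, i.e.\ left invertibility of $\sysmatS$; this is the form I would use.

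For part~(ii), put $r_d\triangleq\Rank\mathbf{J}_{S,[d:0]}-\Rank\mathbf{J}_{S,[d-1:0]}$, the increment compared with $m^*$ in \eqref{eq: minimum input delay}. I would establish three facts: $r_d\le m^*$ always, since only the $m^*$ columns multiplying $\mathbf{u}^*_S[d]$ are appended; the sequence $(r_d)_{d\ge 1}$ is non‑decreasing — the ``staircase''/structure‑at‑infinity property of a block‑Toeplitz Markov‑parameter matrix, obtained by shifting a maximal independent column set of $\mathbf{J}_{S,[d-1:0]}$ one block column rightward inside $\mathbf{J}_{S,[d:0]}$ and exploiting the lower‑triangular Toeplitz pattern of \eqref{eq: expanded l measurements1}; and $r_d\to\mathrm{nRank}\,\mathcal{G}_S$ as $d\to\infty$. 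Together with the reduction above, $(r_d)$ is then a non‑decreasing integer sequence bounded by $m^*$ with limit $m^*$, so $r_d=m^*$ for some finite $d$; the least such $d$ is $\eta_S$ by Definition~\ref{def: system delay}, hence $\eta_S<\infty$.

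For part~(i), introduce the chain $\mathcal{W}_d\triangleq\{\mathbf{x}_0:\mathbf{O}_d\mathbf{x}_0+\mathbf{J}_{S,[d:0]}\mathbf{u}=\mathbf{0}\text{ for some }\mathbf{u}\}=\{\mathbf{x}_0:\mathbf{O}_d\mathbf{x}_0\in\mathcal{R}(\mathbf{J}_{S,[d:0]})\}$. A short rank computation shows that the identity defining $\mu_S$ in \eqref{eq: smallest index} holds exactly when $\mathcal{W}_d=\{\mathbf{0}\}$, and that this automatically forces $\Rank\mathbf{O}_d=n$. Since $\mathcal{W}_0\supseteq\mathcal{W}_1\supseteq\cdots$ is a descending chain of subspaces of $\real^n$ it is constant for $d\ge n$, with stable value equal to the weakly unobservable subspace $\mathcal{V}^*$ of $\sysmatS$ (the largest $(\mathbf{A},\mathbf{B}_S)$‑controlled‑invariant subspace contained in $\Ker\mathbf{C}$). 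A Hautus/PBH argument on \eqref{eq: rosenbrock matrix} then shows that the absence of invariant zeros forces $\mathcal{V}^*=\{\mathbf{0}\}$ — equivalently, $\sysmatS$ is strongly observable — since a nonzero output‑nulling trajectory supported on $\mathcal{V}^*$ would, at the eigenvalues of the dynamics it induces on $\mathcal{V}^*$, drop $\Rank\mathcal{Z}_S[z]$ below $\mathrm{nRank}\,\mathcal{Z}_S$. Hence $\mathcal{W}_d=\{\mathbf{0}\}$ for all $d\ge n$, i.e.\ $\mu_S\le n<\infty$, and for any $N\ge\mu_S$ identity \eqref{eq: smallest index} holds, which is exactly the asserted initial‑state observability of \eqref{eq: DT-system-state}--\eqref{eq: DT-system-measurements}.

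The bookkeeping in the Schur reduction and in the characterization of $\mathcal{W}_d$ is routine. The two substantive ingredients are (a) the monotonicity and the limiting value of the rank increments $(r_d)$ in part~(ii), and (b) the implication ``no invariant zeros $\Rightarrow\mathcal{V}^*=\{\mathbf{0}\}$'' with its $n$‑step quantitative form in part~(i); both are instances of the partitioned‑rank identities and invariant‑zero structure theory in \cite{AA-DSB:19,SK-HP-EZ-DSB:11,BDOA:MD-09}, and I expect (b) to be the main obstacle, since it is precisely the point where the observability block $\mathbf{O}_d$ and the input block $\mathbf{J}_{S,[d:0]}$ must be controlled simultaneously.
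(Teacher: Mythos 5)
The paper offers no proof of this lemma at all: part (i) is delegated to \cite[Prop.~5]{AA-DSB:19} and part (ii) to \cite[Thm.~1]{SK-HP-EZ-DSB:11}, so you are reconstructing an argument the authors omitted. Your part (ii) is correct and is essentially the structure-at-infinity argument behind the cited theorem: the Schur-complement identity $\Rank\mathcal{Z}_S[z]=n+\Rank\mathcal{G}_S[z]$ off $\mathrm{spec}(\mathbf{A})$ turns the hypothesis into $\mathrm{nRank}\,\mathcal{G}_S=m^*$, and the rank increments $r_d$ of the Markov-parameter Toeplitz matrices are indeed bounded by $m^*$, non-decreasing, and convergent to $\mathrm{nRank}\,\mathcal{G}_S$, which forces $r_d=m^*$ at some finite $d$.

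Part (i) has a genuine gap at exactly the step you flagged as the main obstacle. Under the paper's definition an invariant zero is a point where $\Rank\mathcal{Z}_S[z_0]<\mathrm{nRank}\,\mathcal{Z}_S$, and with that definition ``no invariant zeros'' does \emph{not} imply $\mathcal{V}^*=\{\mathbf{0}\}$. Your eigenvector construction produces, for an eigenvalue $\lambda$ of the friend dynamics on $\mathcal{V}^*$, a nonzero vector in $\Ker\mathcal{Z}_S[\lambda]$ with nonzero state component, hence $\Rank\mathcal{Z}_S[\lambda]<n+m^*$; this is a drop below the \emph{normal} rank only if $\mathrm{nRank}\,\mathcal{Z}_S=n+m^*$, a hypothesis the lemma attaches to part (ii) but not to part (i). Concretely, take $\mbf{A}=\mbf{I}_2$, $\mbf{B}_S=[1\;\;0]^\transpose$, $\mbf{C}=[0\;\;1]$: then $\Rank\mathcal{Z}_S[z]\equiv 2=\mathrm{nRank}\,\mathcal{Z}_S$ for all $z$, so there are no invariant zeros in the paper's sense, yet $\mathcal{V}^*=\mathrm{span}\{\mbf{e}_1\}\neq\{\mathbf{0}\}$ and $\Rank\mbf{O}_d=1<n$ for every $d$, so $\mu_S=\infty$. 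The repair is to carry $\mathrm{nRank}\,\mathcal{Z}_S=n+m^*$ (equivalently, left invertibility, which is a standing assumption in \cite{AA-DSB:19}) into part (i), or equivalently to read ``invariant zero'' as any $z_0$ where $\mathcal{Z}_S[z_0]$ loses full column rank $n+m^*$ --- the Hautus test for strong observability you are implicitly invoking. With that hypothesis added, the rest of your part (i) (the identification of \eqref{eq: smallest index} with $\mathcal{W}_d=\{\mathbf{0}\}$, the recursion $\mathcal{W}_{d+1}=\{\mbf{x}\in\Ker\mbf{C}:\mbf{A}\mbf{x}\in\mathcal{W}_d+\mathcal{R}(\mbf{B}_S)\}$, and its stabilization at $\mathcal{V}^*$ within $n$ steps) is sound and closes the argument.
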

\smallskip 
A proof for the statement (i) can be found in \cite[Proposition 5]{AA-DSB:19}. Instead, the statement (ii) follows from \cite[Theorem 1, pp. 227]{SK-HP-EZ-DSB:11}. Thus, if $\sysmatS$ satisfies conditions in Lemma \ref{lma: invariant zeros and conditions one and two}, the assumptions in Proposition \ref{prop: sub-matrix full rank} hold. Hence, the sub-matrix $\bsPsi_{S,[N:d]}$ has full rank and we can recover $(\mbf{x}_0^*,\mbf{u}^*_{S:[0:N-d]})$. 

\section{Location Recovery and Estimation Consistency of the group LASSO Estimator}\label{sec: main results}
We theoretically investigate the performance of the group LASSO estimator in \eqref{eq: group LASSO opt} using the previously stated results for the delayed input estimation. Our results generalize the existing group LASSO's guarantees for static (or non-dynamical) systems \cite{KL-MP-SG-ABT:39, HL-JZ:09} to the dynamical systems with delay $d\geq 0$.

Recall that the estimate in \eqref{eq: group LASSO opt} is $(\what{\mbf{x}}_0,\what{\mbf{u}}_1,\ldots,\what{\mbf{u}}_m)$, where $\what{\bfu}_j=[\hat{u}_j[0],\ldots,\hat{u}_j[N]]^\transpose$. For any $R\subset \{1,\ldots,m\}$, we define $\what{\bfu}_R[k]=[\what{u}_{r_1}[k],\ldots,\what{u}_{r_{|R|}}[k]]$, for all $k \geq 0$ and $r_j \in R$. In words, we group the estimated inputs associated with the set $R$. Further, define $\what{\bfu}_R^\transpose=[\what{\bfu}^\transpose_R[0],\ldots,\what{\bfu}^\transpose_R[N]]$. Thus, we can compare $\what{\bsbeta}_S=(\what{\bfx}_0,\what{\bfu}_S)$ and $\bsbeta^*_S=(\mbf{x}^*_0,{\bfu}^*_S)$ in \eqref{eq: measurement model1*}.

 Recall that $S=\{j: {\mbf{u}}^*_j \ne 0 \}$ and $\what{S}=\{j: \what{\mbf{u}}_j \ne 0 \}$. 
 We derive conditions under which (i) $\what{S}=S$ and (ii) $\|\bsbeta^*_{S,[0:N-d]}-\what{\bsbeta}_{S,[0:N-d]}\|_2\leq \epsilon$, for any $\epsilon > 0$, hold with high probability. To this aim, we make a few assumptions. If $d=0$ and $\mbf{x}_0^*=\mbf{0}$, these assumptions reduce to the standard group LASSO's assumptions \cite{MJW:09}. 
 
 	\begin{assumption} {\bf \emph{(Identifiability and mutual incoherence conditions)}} 
		Consider the following conditions: 
		%
		\begin{itemize}
\item [(A1)]\textit{Group normalization}: The column block matrices $\mbf{O}$ and $\mbf{J}_i$ in \eqref{eq: system matrices} satisfy the group normalization condition: 
			\begin{align}\label{eq: group normalization}
		\max\left\lbrace{\norm{\bsO}_2},{\norm{\bsJ_1}_2},\ldots,{\norm{\bsJ_m}_2}\right\rbrace\leq C\sqrt{T}<\infty . 
			\end{align}
			\item [(A2)]\textit{Least singular value}:
			Let $\bsPsi_{S,[N:d]}$ and $\bsPsi_{S,[d-1:0]}$ be as in \eqref{eq: Psi}, and define $\mbf{M}=[\mbf{I}-\bsPsi_{S,[d-1:0]}\bsPsi_{S,[d-1:0]}^+]$. Then
			\begin{align}\label{eq: strong-invertibility}
			 \norm{\left(\frac{\bsPsi_{S,[N:d]}^\transpose \mbf{M}\bsPsi_{S,[N:d]}}{T}\right)^\dagger}_2\leq \frac{1}{c_\text{min}} <\infty. 
			\end{align}
			\item [(A3)] \textit{Mutual incoherence}: There exists some $\alpha \in [0,1)$, referred to as \textit{"mutual incoherence"} parameter, such that  
			\begin{align}\label{eq: mutual incoherence}
				\mathrm{MIC}\triangleq \max_{j \in S^c}\norm{\mbf{J}_j^\transpose\bsPsi_S(\bsPsi_S^\transpose\bsPsi_S)^+}_2\leq \alpha/m^*.
			\end{align}
		\end{itemize}
	\end{assumption}
	
Assumption (A1) holds for asymptotically stable systems; that is, the eigenvalues of $\mbf{A}$ completely lie inside a complex unit circle. Assumption (A2) enforces conditions on the model identifiability; that is, the uniqueness of the delayed estimate $\what{\mbf{u}}_{S,[0:N-d]}$ but not necessarily on  $\what{\mbf{u}}_{S,[0:N]}$ as we shall see later. Assumption (A2) is satisfied for $d$-delay systems because $\bsPsi_{S,[N:d]}$ has full column rank if $N\geq d$ (see Section III). 

Assumption (A3) is satisfied if $\bsPsi_S$ and $\mbf{J}_j$ are orthogonal ($\mbf{J}_j^\transpose \bsPsi_S=\bfzeros$, for all $j \in S^c$). Orthogonality is restrictive as number of inputs can be more than outputs, or any column of $\bfB_S$ in \eqref{eq: DT-system-state} can be a linear combination of $\mbf{b}_j$, for $j \in S^c$. Nonetheless, (A6) imposes a type of "approximate" orthogonality between $\mbf{J}_j$, where $j \in S^c$, and $\bsPsi_S$. We quantify this approximation using the parameter $\alpha$. The $\ell_2$-norm bound in \eqref{eq: mutual incoherence} could be conservative as the bound depends on $m^*$. This dependence can be avoided by working with the $\ell_1$-norm bound; that is, $\max_{j \in S^c}\norm{\mbf{J}_j^\transpose\bsPsi_S(\bsPsi_S^\transpose\bsPsi_S)^+}_1\leq \alpha$. However, we stick with \eqref{eq: mutual incoherence} as it is useful to derive an upper bound on MIC in \eqref{eq: mutual incoherence} using the system transfer function. In simulations, we study the conservatism incurred due to $\ell_2$-norm based MIC. 

	\begin{theorem}{\bf \emph{(Location recovery consistency)}}\label{thm: location-consistency} Consider the model \eqref{eq: measurement model1*} satisfying assumptions (A1)-(A3) with the active set $S= \{1,\ldots, m^*\}$. For some $\delta>0$ suppose that we select
		\begin{align}\label{eq: optimal lambda}
			\lambda_T=\frac{\sqrt{32}C\sigma}{1-\alpha}\left\lbrace\sqrt{\frac{(N+1)c_1+\log(m-m_0)}{T}}+\frac{\delta}{2} \right\rbrace,  
		\end{align}
		where $c_1=\log(5)$. Then, the following hold with probability at least $1-4\exp(-T\delta^2/2)$. 
		
	\begin{enumerate}[label=(\alph*)]
	\item \textit{(Non-uniqueness)}: For $d>0$, there are infinitely many solutions of \eqref{eq: group LASSO opt}; however, if $d=0$, $\what{\bsbeta}$ in \eqref{eq: group LASSO opt} is unique. 
	\item \textit{(No false inclusion)}: For all $d\geq 0$, the support set of any optimal estimate $\what{\bsbeta}$ is contained with in the true support set; that is, $\what{S}\subset S$.
	\item ($\ell_{\infty}$ bounds): The delayed inputs satisfy the following bound:  $\max_{j \in S}\|\widehat{\mbf{u}}_{j,[0:N-d]}\!-\!\mbf{u}^*_{{j},[0:N-d]}\|_\infty\leq \beta_\text{min}$, where 
		\begin{align}\label{eq: beta min}
			\beta_\text{min}&\!=\! \frac{\sigma}{\sqrt{c_{\min}}}\left\lbrace\sqrt{\frac{2\log((N-d+1)m^*)}{T}}+\delta\right\rbrace\nonumber \\
			&\,\, +\lambda_T\norm{\boldsymbol{\Pi}_{S,[0:N-d]}\left(\frac{\bsPsi^\transpose_S\bsPsi_S}{T}\right)^+}_{\infty}, 
		\end{align}
		$\boldsymbol{\Pi}_{S,[0:N-d]}=[\mbf{0}_{t_S\times n} \,\mbf{I}_{t_S\times dm^*}\,\mbf{0}_{t_{S^c}\times t_{S^c}}]$ and $t_{S}=(N-d+1)m^*$. 
		\item \textit{(Minimum input magnitude and no false exclusion)}: If $\min_{j\in S}\|\mbf{u}_{j,[0:N-d]}^*\|_{\infty}\geq \beta_\text{min}$, we have $\what{S}=S$.
	\end{enumerate}
		\begin{proof}
			See Appendix. 
		\end{proof}
	\end{theorem}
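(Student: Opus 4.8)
I would run a \emph{primal--dual witness} (PDW) argument tailored to the possibly rank-deficient design. Given $S=\{1,\dots,m^*\}$ and $S^c=\{m^*+1,\dots,m\}$, the subdifferential optimality conditions for \eqref{eq: group LASSO opt} read: $(\what{\bsx}_0,\what{\bsu})$ is optimal iff there exist $\what{\mbf{z}}_j\in\real^{N+1}$ with $\norm{\what{\mbf{z}}_j}_2\le 1$, and $\what{\mbf{z}}_j=\what{\bsu}_j/\norm{\what{\bsu}_j}_2$ whenever $\what{\bsu}_j\ne\mbf{0}$, such that $\mbf{O}^\transpose(\mbf{y}-\mbf{O}\what{\bsx}_0-\mbf{J}\what{\bsu})=\mbf{0}$ (no penalty on $\bsx_0$) and $\frac{1}{T}\mbf{J}_j^\transpose(\mbf{y}-\mbf{O}\what{\bsx}_0-\mbf{J}\what{\bsu})=\lambda_T\what{\mbf{z}}_j$ for all $j$. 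First I would analyze the oracle-restricted program, i.e.\ \eqref{eq: group LASSO opt} with $\bsu_{S^c}$ clamped to $\mbf{0}$; with $\bsbeta_S=(\bsx_0,\bsu_S)$ and $\bsPsi_S=[\mbf{O}\ \mbf{J}_S]$ as in \eqref{eq: measurement model1*}, its stationarity reads $\bsPsi_S^\transpose\bsPsi_S(\what{\bsbeta}_S-\bsbeta^*_S)=\bsPsi_S^\transpose\mbf{v}-T\lambda_T\what{\mbf{z}}_{\bsbeta_S}$, where $\what{\mbf{z}}_{\bsbeta_S}$ is zero on the $\bsx_0$-block and carries the group subgradients on the $\bsu_S$-block. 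Because $\bsPsi_S$ may be rank-deficient this does not pin down $\what{\bsbeta}_S$; but by Proposition~\ref{prop: sub-matrix full rank} the kernel of $\bsPsi_S$ lies entirely in the non-identifiable tail $\bsu_{S,[N-d+1:N]}$, so the identifiable part $\bs{\Pi}_{S,[0:N-d]}\what{\bsbeta}_S$ is uniquely determined and satisfies $\bs{\Pi}_{S,[0:N-d]}(\what{\bsbeta}_S-\bsbeta^*_S)=\bs{\Pi}_{S,[0:N-d]}(\bsPsi_S^\transpose\bsPsi_S)^+(\bsPsi_S^\transpose\mbf{v}-T\lambda_T\what{\mbf{z}}_{\bsbeta_S})$.

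For part~(b) I would then certify strict dual feasibility on $S^c$. Substituting $\mbf{y}=\bsPsi_S\bsbeta^*_S+\mbf{v}$ (legitimate since $\bsu^*_{S^c}=\mbf{0}$) into the $S^c$-stationarity and eliminating $\what{\bsbeta}_S-\bsbeta^*_S$ yields, for $j\in S^c$,
\[
\what{\mbf{z}}_j=\frac{1}{\lambda_T T}\mbf{J}_j^\transpose\big(\mbf{I}-\bsPsi_S(\bsPsi_S^\transpose\bsPsi_S)^+\bsPsi_S^\transpose\big)\mbf{v}+\mbf{J}_j^\transpose\bsPsi_S(\bsPsi_S^\transpose\bsPsi_S)^+\what{\mbf{z}}_{\bsbeta_S}.
\]
The second term is bounded in $\ell_2$ by $\mathrm{MIC}\cdot\norm{\what{\mbf{z}}_{\bsbeta_S}}_2\le(\alpha/m^*)\sqrt{m^*}\le\alpha$ via (A3) and the crude bound $\norm{\what{\mbf{z}}_{\bsbeta_S}}_2\le\sqrt{m^*}$, which holds for \emph{any} valid subgradient, hence for every minimizer of the restricted program. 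The matrix in the first term is the orthogonal projector onto $\mathcal{R}(\bsPsi_S)^\perp$, so that term is a Gaussian vector in $\real^{N+1}$; covering the unit sphere of $\real^{N+1}$ by a $1/2$-net of size $5^{N+1}$, using (A1) to bound each directional variance by $\sigma^2 C^2 T$, and combining a Gaussian tail bound with a union bound over the $m-m^*$ inactive sources shows this term is $<1-\alpha$ with probability at least $1-2\exp(-T\delta^2/2)$ once $\lambda_T$ is taken as in \eqref{eq: optimal lambda} (the $(N+1)c_1$, $\log(m-m_0)$, $\sqrt{32}$ and $\delta/2$ all surface at this step). Thus $\max_{j\in S^c}\norm{\what{\mbf{z}}_j}_2<1$, and the standard PDW uniqueness lemma forces $\what{\bsu}_{S^c}=\mbf{0}$ for every optimal $\what{\bsbeta}$, i.e.\ $\what{S}\subset S$. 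Part~(a) then follows: if $d=0$, $\bsPsi_S$ has full column rank, the restricted objective is strictly convex, and strict dual feasibility upgrades this to a unique global minimizer; if $d>0$, $\Ker(\bsPsi_S)\ne\{\mbf{0}\}$ (it is the non-identifiable tail), strict convexity fails, and one exhibits a nontrivial family of minimizers.

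For part~(c) I would use the identity above for $\bs{\Pi}_{S,[0:N-d]}(\what{\bsbeta}_S-\bsbeta^*_S)$, splitting it into a stochastic and a bias piece. The stochastic piece $\bs{\Pi}_{S,[0:N-d]}\bsPsi_S^+\mbf{v}$ has mean-zero Gaussian coordinates whose variances are at most $\sigma^2/(c_{\min}T)$ by (A2), so a maximal inequality over its $t_S=(N-d+1)m^*$ entries yields the first line of \eqref{eq: beta min}, namely $\frac{\sigma}{\sqrt{c_{\min}}}\big(\sqrt{2\log(t_S)/T}+\delta\big)$, on an event of probability at least $1-2\exp(-T\delta^2/2)$. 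The bias piece is deterministic and bounded in $\ell_\infty$ by $\lambda_T\,\norm{\bs{\Pi}_{S,[0:N-d]}(\bsPsi_S^\transpose\bsPsi_S/T)^+}_\infty$ (using $\norm{\what{\mbf{z}}_{\bsbeta_S}}_\infty\le1$), the second term of \eqref{eq: beta min}. Intersecting with the event from part~(b) gives overall probability at least $1-4\exp(-T\delta^2/2)$, and translating from the identifiable block to each source gives $\max_{j\in S}\norm{\what{\bsu}_{j,[0:N-d]}-\bsu^*_{j,[0:N-d]}}_\infty\le\beta_{\min}$. Part~(d) is then immediate: if $\min_{j\in S}\norm{\bsu^*_{j,[0:N-d]}}_\infty\ge\beta_{\min}$, the bound forces $\what{\bsu}_{j,[0:N-d]}\ne\mbf{0}$ for each $j\in S$, hence $S\subset\what{S}$, and with $\what{S}\subset S$ from part~(b) we conclude $\what{S}=S$.

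The main obstacle is making the PDW argument robust to rank-deficiency of $\bsPsi_S$: in the classical setting one inverts the restricted Gram matrix and argues about a single candidate, whereas here the oracle-restricted program has a positive-dimensional solution set, and parts~(b)--(d) must hold for \emph{every} minimizer. Two ingredients resolve this. First, Proposition~\ref{prop: sub-matrix full rank} isolates exactly which coordinates ($\bsx_0$ and $\bsu_{S,[0:N-d]}$) are common to all minimizers, which is why the $\ell_\infty$ bound in part~(c) is stated only for those. Second, the $\ell_2$ form of the mutual incoherence condition with the $1/m^*$ threshold in \eqref{eq: mutual incoherence} is precisely what lets the incoherence term absorb the subgradient on $S$ via $\norm{\what{\mbf{z}}_{\bsbeta_S}}_2\le\sqrt{m^*}$, a bound insensitive to how the tail of $\what{\bsu}_S$ — and hence the subgradient direction — varies across the solution set. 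The remaining work is the concentration bookkeeping (the sphere-covering bound for the $S^c$ term and the Gaussian maximal inequality for the identifiable block) and checking the constants in \eqref{eq: optimal lambda} and \eqref{eq: beta min}.
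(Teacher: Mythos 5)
Your proposal is correct and follows essentially the same route as the paper: a primal--dual witness construction with an oracle-restricted subproblem, strict dual feasibility on $S^c$ certified by the $\ell_2$ mutual incoherence bound plus a sphere-covering Gaussian tail argument (the source of the $5^{N+1}$, i.e.\ $(N+1)\log 5$, term in $\lambda_T$), extraction of the identifiable block $(\bsx_0,\bsu_{S,[0:N-d]})$ via Proposition~\ref{prop: sub-matrix full rank}, a Gaussian maximal inequality for the $\ell_\infty$ bound, and a triangle-inequality step for no false exclusion. The only cosmetic difference is that you bound the subgradient block by $\|\what{\mbf{z}}_S\|_2\le\sqrt{m^*}$ where the paper uses $\sum_{j\in S}\|\what{\mbf{z}}_j\|_2\le m^*$; both absorb into the $\alpha/m^*$ threshold.
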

	
		\begin{corollary}\label{cor: l2-consistency} Consider  $\what{\bsbeta}_{S,[0:N-d]}=(\what{\bfx}_0,\what{\bfu}_{S,[0:N-d]})$ and $\bsbeta^*_{S,[0:N-d]}=(\mbf{x}^*_0,{\bfu}^*_{S,[0:N-d]})$. Let $c_1=\log(5)$ and $t_S=(N-d+1)m^*$. Under the assumptions of Theorem \ref{thm: location-consistency}, with probability at least $1-\exp(-\delta^2T/2)$, we have
		\begin{align}\label{eq: l2 bound corollary}
			\norm{\bsbeta^*_{S,[0:N-d]}-\what{\bsbeta}_{S,[0:N-d]}}_2&\!\leq\!\nonumber \\
			&\hspace{-21.0mm} \frac{2\sigma}{\sqrt{c_{min}}}\left\lbrace\sqrt{\frac{2c_1(n+t_S)}{T}}+\delta\right\rbrace+ \lambda_T\sqrt{\frac{m^*}{Tc_{\min{}}}}, 
		\end{align}
	\end{corollary}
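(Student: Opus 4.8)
The plan is to build directly on Theorem~\ref{thm: location-consistency}, whose proof --- a primal--dual witness argument --- supplies the explicit form of $\what{\bsbeta}$ on the good event that I would reuse rather than re-run. Conditional on the high-probability event of Theorem~\ref{thm: location-consistency}, part~(b) gives $\what{S}\subset S$, so every optimal $\what{\bsbeta}$ has $\what{\bsu}_j=\bfzeros$ for $j\in S^c$ and hence solves the \emph{reduced} program $\min_{\bsbeta_S}\bigl\{\tfrac{1}{2T}\norm{\mbf{y}-\bsPsi_S\bsbeta_S}_2^2+\lambda_T\norm{\bsu_S}_{2,1}\bigr\}$. Writing $\bsDelta=\what{\bsbeta}_S-\bsbeta^*_S$, substituting $\mbf{y}=\bsPsi_S\bsbeta^*_S+\mbf{v}$ into the stationarity condition of this reduced program and restricting to the row space of $\bsPsi_S$ yields
\begin{align*}
	\what{\bsbeta}_{S,[0:N-d]}-\bsbeta^*_{S,[0:N-d]}
	&= \boldsymbol{\Pi}_{S,[0:N-d]}\bsPsi_S^{+}\mbf{v}\\
	&\quad-\lambda_T\,\boldsymbol{\Pi}_{S,[0:N-d]}\Bigl(\tfrac{\bsPsi_S^{\transpose}\bsPsi_S}{T}\Bigr)^{+}\begin{bmatrix}\bfzeros\\ \what{\bsz}_S\end{bmatrix},
\end{align*}
where $\what{\bsz}_S$ is a subgradient of $\norm{\cdot}_{2,1}$ at $\what{\bsu}_S$, so each of its $m^*$ source-blocks has Euclidean norm at most $1$ and thus $\norm{\what{\bsz}_S}_2\le\sqrt{m^*}$. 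The subtle point is that, although $\what{\bsbeta}_S$ is itself non-unique when $d>0$ (Theorem~\ref{thm: location-consistency}(a)), Proposition~\ref{prop: sub-matrix full rank}(1)--(2) forces $\Ker(\bsPsi_S)$ to lie entirely in the coordinates complementary to $\bsbeta_{S,[0:N-d]}$; hence $\boldsymbol{\Pi}_{S,[0:N-d]}$ annihilates $\Ker(\bsPsi_S)$, the displayed identity is unambiguous, and it is exactly the quantity the corollary controls. The corollary then follows from the triangle inequality once the two terms on the right are bounded.

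For the stochastic term, $\boldsymbol{\Pi}_{S,[0:N-d]}\bsPsi_S^{+}\mbf{v}$ is centered Gaussian, supported on an $(n+t_S)$-dimensional subspace, with covariance $\sigma^2\,\boldsymbol{\Pi}_{S,[0:N-d]}(\bsPsi_S^{\transpose}\bsPsi_S)^{+}\boldsymbol{\Pi}_{S,[0:N-d]}^{\transpose}$. By the block-pseudoinverse identity $\boldsymbol{\Pi}_{S,[0:N-d]}(\bsPsi_S^{\transpose}\bsPsi_S)^{+}\boldsymbol{\Pi}_{S,[0:N-d]}^{\transpose}=(\bsPsi_{S,[N:d]}^{\transpose}\mbf{M}\bsPsi_{S,[N:d]})^{-1}$ --- legitimate precisely because Proposition~\ref{prop: sub-matrix full rank} makes the relevant Schur complement nonsingular --- together with Assumption~(A2), this covariance is dominated in the Loewner order by $\tfrac{\sigma^2}{Tc_{\min}}\bfI$. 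I would then bound its Euclidean norm by $2\max_{\mbf{w}\in\mathcal{N}}\langle\boldsymbol{\Pi}_{S,[0:N-d]}\bsPsi_S^{+}\mbf{v},\mbf{w}\rangle$ over a $\tfrac{1}{2}$-net $\mathcal{N}$ of the unit sphere of that subspace, with $|\mathcal{N}|\le 5^{\,n+t_S}$ (whence $c_1=\log 5$); a union bound over $\mathcal{N}$ and the one-sided Gaussian tail give, with probability at least $1-\exp(-T\delta^2/2)$,
\begin{align*}
	\norm{\boldsymbol{\Pi}_{S,[0:N-d]}\bsPsi_S^{+}\mbf{v}}_2\le\frac{2\sigma}{\sqrt{c_{\min}}}\Bigl(\sqrt{\tfrac{2c_1(n+t_S)}{T}}+\delta\Bigr),
\end{align*}
the factor $2$ being the usual net-discretization loss. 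The deterministic term is handled exactly as the $\ell_\infty$ bound of Theorem~\ref{thm: location-consistency}(c), but with $\ell_2$ in place of $\ell_\infty$ throughout: the same identity and Assumption~(A2), together with $\norm{\what{\bsz}_S}_2\le\sqrt{m^*}$, bound $\lambda_T\norm{\boldsymbol{\Pi}_{S,[0:N-d]}(\bsPsi_S^{\transpose}\bsPsi_S/T)^{+}[\bfzeros;\what{\bsz}_S]}_2$ by $\lambda_T\sqrt{m^*/(Tc_{\min})}$. Summing the two displays, and intersecting this additional concentration event with the event of Theorem~\ref{thm: location-consistency}, yields \eqref{eq: l2 bound corollary}.

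The main obstacle --- and the only place real thought is needed --- is managing the rank deficiency of $\bsPsi_S$: because the minimizer is not unique for $d>0$, one must verify that the object being bounded, $\what{\bsbeta}_{S,[0:N-d]}$, is well-defined and that every pseudoinverse above acts like a genuine inverse on the recoverable block, so that the single constant $c_{\min}$ of Assumption~(A2) governs both terms. This is exactly where Proposition~\ref{prop: sub-matrix full rank} enters (equivalently, the nonsingular Schur complement $\bsPsi_{S,[N:d]}^{\transpose}\mbf{M}\bsPsi_{S,[N:d]}$), and it is also what makes the $\ell_2$ deterministic constant come out as $\sqrt{m^*/(Tc_{\min})}$ rather than something involving the uncontrolled smallest nonzero singular value of $\bsPsi_S$; after that bookkeeping the Gaussian-net estimate and the subgradient-norm bound are routine, the only care being to keep the effective dimension at $n+t_S$ and to match the numerical constants to those in \eqref{eq: optimal lambda} and Theorem~\ref{thm: location-consistency}(c).
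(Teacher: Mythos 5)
The paper states this corollary without giving a proof, so there is nothing to match against line by line; but your argument is assembled from exactly the ingredients the paper itself deploys for Theorems \ref{thm: location-consistency} and \ref{thm: l2-consistency}: the stationarity identity \eqref{eq: difference theta2}, the block-diagonal structure \eqref{eq: dagger identity} of $\bsPsi_S^\dagger\bsPsi_S$ (which, via Proposition \ref{prop: sub-matrix full rank}, is what makes $\what{\bsbeta}_{S,[0:N-d]}$ well-defined despite the non-uniqueness for $d>0$ --- your observation that differences of optimizers lie in $\Ker(\bsPsi_S)$, which sits entirely in the non-recoverable coordinates, is correct and is the right way to handle this), the covariance identity $\wtilde{\boldsymbol{\Pi}}_S(\bsPsi_S^\transpose\bsPsi_S)^\dagger\wtilde{\boldsymbol{\Pi}}_S^\transpose=[(\mbf{M}\bsPsi_{S,[N:d]})^\dagger(\mbf{M}\bsPsi_{S,[N:d]})]^\transpose$ from the proof of Theorem \ref{thm: l2-consistency}, and the $5^{l}$-net Gaussian tail of Lemma \ref{lma: sub-gaussian norm}, which is indeed where $c_1=\log 5$ comes from. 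Your treatment of the stochastic term is correct: with $l=n+t_S$ and $\norm{\boldsymbol{\Sigma}}_2\le\sigma^2/(Tc_{\min})$, the threshold $\tfrac{2\sigma}{\sqrt{c_{\min}}}\bigl(\sqrt{2c_1(n+t_S)/T}+\delta\bigr)$ yields failure probability $5^{n+t_S}\exp(-c_1(n+t_S)-T\delta^2/2)=\exp(-T\delta^2/2)$, exactly the first term of \eqref{eq: l2 bound corollary}.

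The weak point is the deterministic term, which you assert rather than derive. Two issues hide there. First, Assumption (A2) controls only the symmetric restriction $\wtilde{\boldsymbol{\Pi}}_S(\bsPsi_S^\transpose\bsPsi_S/T)^{+}\wtilde{\boldsymbol{\Pi}}_S^\transpose$ to the recoverable block, whereas the stacked subgradient vector $\bigl[\bfzeros^\transpose\;(\mbf{P}^\transpose\bshz_S)^\transpose\bigr]^\transpose$ has nonzero entries in the non-recoverable coordinates $\mbf{u}_{S,[N-d+1:N]}$ whenever $d>0$; bounding $\norm{\wtilde{\boldsymbol{\Pi}}_S(\bsPsi_S^\transpose\bsPsi_S/T)^{+}\bigl[\bfzeros^\transpose\;(\mbf{P}^\transpose\bshz_S)^\transpose\bigr]^\transpose}_2$ therefore also requires controlling the off-diagonal block $\wtilde{\boldsymbol{\Pi}}_S(\bsPsi_S^\transpose\bsPsi_S/T)^{+}(\mbf{I}-\wtilde{\boldsymbol{\Pi}}_S^\transpose\wtilde{\boldsymbol{\Pi}}_S)$, which (A2) says nothing about. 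This is precisely why Theorem \ref{thm: location-consistency}(c) keeps the full matrix norm $\norm{\boldsymbol{\Pi}_{S,[0:N-d]}(\bsPsi_S^\transpose\bsPsi_S/T)^{+}}_\infty$ inside $\beta_{\min}$ rather than replacing it with $1/c_{\min}$. Second, even on the diagonal block the chain of inequalities you describe ($\norm{(\bsPsi_S^\transpose\bsPsi_S/T)^{+}}$ restricted $\le 1/c_{\min}$ times $\norm{\bshz_S}_2\le\sqrt{m^*}$) produces $\lambda_T\sqrt{m^*}/c_{\min}$, not $\lambda_T\sqrt{m^*/(Tc_{\min})}$; the extra $T^{-1/2}$ and the square root on $c_{\min}$ do not fall out of that computation. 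This mismatch is arguably inherited from the corollary statement itself, but as written the second term of your bound is the one step that is not actually proved.
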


We use the \emph{primal-dual witness} technique of Wainwright \cite{MJW:09, MJW:19, HL-JZ:09} to prove Theorem \ref{thm: location-consistency}. 


Part (a) in Theorem \ref{thm: location-consistency} states that the group LASSO estimate $\bsbeta$ is non-unique unless the sub-system realized by $\sysmatS$ has zero delay. This is because, for $N>d>0$, the sub-matrix $\bsPsi_{S,[N:d]}$ in \eqref{eq: Psi} has full rank, but not $\bsPsi_{S}$. However, Part (b) in Theorem \ref{thm: location-consistency} states that $\widehat{S}\subseteq S$, for any optimal estimate $\bsbeta$ in \eqref{eq: group LASSO opt}. Thus, the estimated inputs restricted to the complement set are zero: $\widehat{\bfu}_{j^c}=\bfzeros$, for all $j \in S^c$. Thus, the  non-uniqueness of the optimal solution does not effect the location consistency of the group LASSO estimator. 

Part (d) in Theorem \ref{thm: location-consistency} (d)---a consequence of the $\ell_\infty$ norm bound in part (b)---says that for $\widehat{S}= S$ to hold (i.e., to detect true inputs correctly) , the true non-zero input signal strength should not be too small, precisely, smaller than $\beta_{min}$ in \eqref{eq: beta min}.  The probabilistic result in Theorem \ref{thm: location-consistency} also helps determine the number of measurements ($N$) or sensors ($p$) required to achieve certain amount of performance. Let us simplify $\lambda_T$ in \eqref{eq: optimal lambda}  to comment on its scaling. By substituting $T=p(N+1)$ and assuming that $\log(m-m*)/(N+1)>>c_1$, we have
\begin{align}\label{eq: simplified lambda}
	\lambda_T=O\left(\sqrt{\frac{\log(m-m^*)}{p(N+1)}}+\frac{\delta}{2}\right). 
\end{align}
For $p=1$, $\lambda_T$ in \eqref{eq: simplified lambda} reduces to that of $\lambda_T$ for the traditional LASSO problem \cite{MJW:09}.  Thus, the term $c_1(N+1)/T$  in \eqref{eq: optimal lambda} takes into consideration the number of unknowns in $\bsu^*_j$, and $p$ in $p(N+1)$ accounts for the number of sensors. 

The choice of $\lambda_T$ plays an important role in determining if Theorem \ref{thm: location-consistency} (c) (that is, $\what{S}=S$) holds. In fact, the smaller the $\lambda_T$, the smaller the minimum threshold $\beta_\text{min}$. Interestingly, for $\lambda_T=0$, which happens, say, when $\sigma=0$, the optimization problem in \eqref{eq: group LASSO opt} reduces to the standard ordinary least squares (OLS) problem. Thus, there is no shrinkage of input estimates toward zero. Further, $\lambda_T$ does not depend on $c_{min}$ in \eqref{eq: strong-invertibility} but depends on the group normalization constant $C$ in \eqref{eq: group normalization} and the mutual incoherence parameter $\alpha$ in \eqref{eq: mutual incoherence}. 

To understand the role of $C$ on $\lambda_T$, and ultimately on $\beta_\text{min}$, let $d=0$ and note that $\bsPsi_S$ full rank. Assuming (A1) holds with equality, from the standard norm inequalities, we have 
\begin{align*}
    \kappa_1+\lambda_T\sqrt{\kappa_2}/C^2\geq\beta_\text{min}\geq \kappa_1+\lambda_T/(C^2\sqrt{\kappa_2}),
\end{align*}
where $\kappa_1$ is the first term on the right side of the equality in \eqref{eq: beta min} and $\kappa_2=(N+1)m^*$ is the dimension of $\mbf{u}^*_S$. Noting that $\lambda_T$ is proportional to $C$, we see that $\beta_\text{min}=\kappa_1+O(\sqrt{\kappa_2}/C)$. As expected, larger values of $C$ results in smaller $\beta_\text{min}$ because the effective signal strength of $\bsPsi_S\mbf{u}^*_S$ is large. Instead, smaller values of $C$ results in higher $\beta_\text{min}$, thereby requiring $\mbf{u}_S^*$ to be large. If not, the strength of $\bsPsi_S\mbf{u}^*_S$ decreases. Finally, from \eqref{eq: optimal lambda}, we observe that $\lambda_T$ is an increasing function of $\alpha\in [0,1)$; thus, higher the $\alpha$ larger is the $\beta_\text{min}$. Recall that $\alpha$ is large if $\mbf{J}_j$, for $j \in S^c$, is highly correlated with $\bsPsi_S$.


We now comment on the $\ell_2$-error bound between $\bsbeta^*_{S,[0:N-d]}$ and $\what{\bsbeta}_{S,[0:N-d]}$ given in Corollary \ref{thm: location-consistency}. First, the error bound depends on the number of unknown parameters $n+t_S=n+(N-d+1)m^*$, i.e., the dimension of the initial state and delayed input. Letting $T=p(N+1)\gg n$, we observe that the first term of the bound in \eqref{eq: l2 bound corollary} scales as $O(\tilde{c}(\sqrt{m^*/p}+\delta))$, where $\tilde{c}=2\sigma/\sqrt{c_\text{min}}$. Thus, more PMUs result in less error. However, the bound is loose for large values of $\lambda_T$. 
To remedy this shortcoming, we consider the following OLS estimate: 
\begin{align}\label{eq: OLS estimate}
\what{\bsbeta}^{(OLS)}_{\widehat{S},[0:N-d]}\triangleq\wtilde{\boldsymbol{\Pi}}_{\widehat{S},[0:N-d]}(\bsPsi_{\widehat{S}}^+\mbf{y}),
\end{align}
where $\wtilde{\boldsymbol{\Pi}}_{\widehat{S},0:N-d}$ is defined similar to $\wtilde{\boldsymbol{\Pi}}_{{S},0:N-d}$ in \eqref{eq: Pi selection above}. We present the second main result of this section: an oracle bound on the error $\|\bsbeta^*_{S,[0:N-d]}-\what{\beta}^{(OLS)}_{\widehat{S},[0:N-d]}\|_2$.

\begin{theorem}{\bf \emph{($\ell_2$-consistency: oracle bounds)}}\label{thm: l2-consistency} Suppose that the hypotheses in Theorem \ref{thm: location-consistency} hold. Then, for any $\delta,\delta_1>0$, with probability at least $1-4\exp(-T\delta^2/2)-\delta_1$,
		\begin{align}\label{eq: l2 bound oracle}
			\norm{\bsbeta^*_{S,[0:N-d]}-\what{\bsbeta}^{(OLS)}_{\what{S},[0:N-d]}}_2&\!\leq\!\frac{4\sigma}{\sqrt{c_{min}}}\left\lbrace\sqrt{\frac{(n+t_S)}{T}}\right\rbrace\nonumber \\
			& \hspace{-8.0mm} +\frac{2\sigma}{\sqrt{c_{min}}}\left\lbrace\sqrt{\frac{1}{T}\log\left(\frac{1}{{\delta_1}}\right)}\right\rbrace,
		\end{align}
\end{theorem}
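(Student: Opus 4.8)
The plan is to condition on the event that the group LASSO recovers the support exactly, on which the OLS estimator becomes an explicit linear-in-noise map, and then to control the resulting Gaussian error using Assumption~(A2).

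First I would reduce the error to a pure noise term. By Theorem~\ref{thm: location-consistency} (parts (b) and (d), the latter invoking the standing minimum input-magnitude hypothesis carried into this theorem), the event $\mcal{E}\triangleq\{\what{S}=S\}$ holds with probability at least $1-4\exp(-T\delta^2/2)$. On $\mcal{E}$ we have $\bsPsi_{\what{S}}=\bsPsi_S$ and $\wtilde{\boldsymbol{\Pi}}_{\what{S},[0:N-d]}=\wtilde{\boldsymbol{\Pi}}_{S,[0:N-d]}$, so $\what{\bsbeta}^{(OLS)}_{\what{S},[0:N-d]}=\wtilde{\boldsymbol{\Pi}}_{S,[0:N-d]}\bsPsi_S^+\mbf{y}$. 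Since $\mbf{u}^*_j=\bfzeros$ for $j\in S^c$, model \eqref{eq: measurement model1*} reads $\mbf{y}=\bsPsi_S\bsbeta^*_S+\mbf{v}$; combining this with the noise-free recovery identity of Proposition~\ref{prop: sub-matrix full rank}, namely $\wtilde{\boldsymbol{\Pi}}_{S,[0:N-d]}\bsPsi_S^+\bsPsi_S\bsbeta^*_S=\bsbeta^*_{S,[0:N-d]}$, gives on $\mcal{E}$ the clean identity $\what{\bsbeta}^{(OLS)}_{\what{S},[0:N-d]}-\bsbeta^*_{S,[0:N-d]}=\mbf{W}\mbf{v}$, where $\mbf{W}\triangleq\wtilde{\boldsymbol{\Pi}}_{S,[0:N-d]}\bsPsi_S^+$.

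Next I would compute the covariance structure of $\mbf{W}\mbf{v}$. Writing $\bsPsi_S=[\bsPsi_{S,[N:d]}\ \ \bsPsi_{S,[d-1:0]}]$ and eliminating the $\bsPsi_{S,[d-1:0]}$-block from the normal equations $\bsPsi_S^\transpose\bsPsi_S\mbf{z}=\bsPsi_S^\transpose\mbf{y}$ (which is legitimate with pseudo-inverses throughout because $\mcal{R}(\bsPsi_{S,[d-1:0]}^\transpose\bsPsi_{S,[N:d]})\subseteq\mcal{R}(\bsPsi_{S,[d-1:0]}^\transpose)$), the $\bsPsi_{S,[N:d]}$-block of \emph{any} least-squares solution, in particular of $\bsPsi_S^+\mbf{y}$, equals $(\bsPsi_{S,[N:d]}^\transpose\mbf{M}\bsPsi_{S,[N:d]})^{-1}\bsPsi_{S,[N:d]}^\transpose\mbf{M}\,\mbf{y}$, with $\mbf{M}=\mbf{I}-\bsPsi_{S,[d-1:0]}\bsPsi_{S,[d-1:0]}^+$ the orthogonal projector onto $\mcal{R}(\bsPsi_{S,[d-1:0]})^\perp$; the indicated inverse exists by parts (1)--(2) of Proposition~\ref{prop: sub-matrix full rank}. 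Hence $\mbf{W}=(\bsPsi_{S,[N:d]}^\transpose\mbf{M}\bsPsi_{S,[N:d]})^{-1}\bsPsi_{S,[N:d]}^\transpose\mbf{M}$, and using $\mbf{M}=\mbf{M}^\transpose=\mbf{M}^2$,
\[
\mbf{W}\mbf{W}^\transpose=\left(\bsPsi_{S,[N:d]}^\transpose\mbf{M}\bsPsi_{S,[N:d]}\right)^{-1}.
\]
Assumption~(A2) then yields $\norm{\mbf{W}}_2^2=\norm{\mbf{W}\mbf{W}^\transpose}_2\le 1/(Tc_{\min})$, and since $\mbf{W}\mbf{W}^\transpose$ is $(n+t_S)\times(n+t_S)$, $\norm{\mbf{W}}_F^2=\trace(\mbf{W}\mbf{W}^\transpose)\le (n+t_S)/(Tc_{\min})$. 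Finally, since $\mbf{g}\mapsto\norm{\mbf{W}\mbf{g}}_2$ is $\norm{\mbf{W}}_2$-Lipschitz and $\mbf{v}\sim\mcal{N}(\bfzeros,\sigma^2\mbf{I}_T)$, Gaussian (Borell--TIS) concentration together with $\mathbb{E}\norm{\mbf{W}\mbf{v}}_2\le(\mathbb{E}\norm{\mbf{W}\mbf{v}}_2^2)^{1/2}=\sigma\norm{\mbf{W}}_F$ gives, for any $\delta_1>0$, with probability at least $1-\delta_1$,
\[
\norm{\mbf{W}\mbf{v}}_2\le \frac{\sigma}{\sqrt{c_{\min}}}\sqrt{\tfrac{n+t_S}{T}}+\frac{\sigma}{\sqrt{c_{\min}}}\sqrt{\tfrac{2\log(1/\delta_1)}{T}},
\]
which is below the right-hand side of \eqref{eq: l2 bound oracle} (the constants $4$ and $2$ leave ample room; a cruder Laurent--Massart $\chi^2$-tail bound on $\norm{\mbf{W}\mbf{v}}_2^2=\mbf{v}^\transpose\mbf{W}^\transpose\mbf{W}\mbf{v}$ yields the same shape). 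A union bound over $\mcal{E}$ and the concentration event gives the stated probability $1-4\exp(-T\delta^2/2)-\delta_1$.

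The main obstacle is the middle step: showing that the $(n+t_S)$-dimensional block of the (generally singular) Gram pseudo-inverse $(\bsPsi_S^\transpose\bsPsi_S)^+$ collapses exactly to the invertible partialled-out matrix $\bsPsi_{S,[N:d]}^\transpose\mbf{M}\bsPsi_{S,[N:d]}$ that appears in (A2). This is precisely where the $d$-delay structure matters: the range condition $\mcal{R}(\bsPsi_{S,[N:d]})\cap\mcal{R}(\bsPsi_{S,[d-1:0]})=\{\bfzeros\}$ of Proposition~\ref{prop: sub-matrix full rank}(2) is what makes $\bsPsi_{S,[N:d]}^\transpose\mbf{M}\bsPsi_{S,[N:d]}$ nonsingular, so the delayed-parameter OLS is well defined even though $\bsPsi_S$ is rank deficient; carrying the block elimination through with pseudo-inverses (and checking the attendant range inclusions) is the only genuinely delicate bookkeeping, while everything downstream is routine Gaussian-concentration boilerplate.
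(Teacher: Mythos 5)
Your proposal is correct and follows essentially the same route as the paper: condition on the exact-recovery event $\{\what{S}=S\}$ from Theorem~\ref{thm: location-consistency}, write the OLS error as the Gaussian vector $\wtilde{\boldsymbol{\Pi}}_{S,[0:N-d]}\bsPsi_S^+\mbf{v}$, show via the delay structure of Proposition~\ref{prop: sub-matrix full rank} that its covariance collapses to $\sigma^2(\bsPsi_{S,[N:d]}^\transpose\mbf{M}\bsPsi_{S,[N:d]})^{-1}$ so that (A2) bounds its operator norm by $\sigma^2/(Tc_{\min})$, and finish with a Gaussian norm-concentration bound plus a union bound. The only cosmetic differences are that you derive the covariance identity by explicit block elimination of the normal equations where the paper cites an external lemma, and you invoke Borell--TIS concentration (yielding slightly sharper constants) where the paper uses its Lemma~\ref{lma: sub-gaussian norm}.
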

The proof is in Appendix. Similar to the bound in Corollary \ref{cor: l2-consistency}, the first term in \eqref{eq: l2 bound oracle} is $O(\tilde{c}(\sqrt{m^*/p}))$; however, the second term in \eqref{eq: l2 bound oracle} does not depend on $\lambda_T$ and it approaches zero as $T\to \infty$. Thus, the overall error is dictated by $m*/p$. We call the bound in \eqref{eq: l2 bound oracle} as the oracle because the bound holds for $\what{\bsbeta}^{(OLS)}_{{S},[0:N-d]}$, albeit with probability $1-\delta_1$.

\vspace{-1.0mm}
\subsection{Extensions of group LASSO guarantees to noisy dynamics}\label{sec: noisy dynamics}
We extend our results in Theorems \ref{thm: location-consistency} and \ref{thm: l2-consistency} to the setting where system in \eqref{eq: DT-system-state}-\eqref{eq: DT-system-measurements} is affected by both state and measurement noises. We also relax the diagonal covariance structure of the measurement noise. Consider the following dynamics: 
	\begin{align}\label{eq: DT-system-process noise}
\begin{split}
			\mbf{x}[k+1]&=\mbf{A} \mbf{x}[k]+\mbf{B}_S\mbf{u}^*_S[k]+\sum_{j\in S^c}\mbf{b}_j{u}^*_j[k]+\mbf{w}[k]\\
			\mbf{y}[k]&=\mbf{C}	\mbf{x}[k]+\mbf{v}[k], 	     
		\end{split}
	\end{align}
	where the noise random vectors $\mbf{w}[k]\overset{iid}{\sim} \mathcal{N}(\mbf{0},\mbf{Q})$ and $\mbf{v}[k]\overset{iid}{\sim} \mathcal{N}(\mbf{0},\mbf{R})$, with $\mbf{Q}\succeq \mbf{0}$ and $\mbf{R} \succ \mbf{0}$, are uncorrelated. Let $\mbf{y}=[\mbf{y}[0]^\transpose\ldots \mbf{y}[N]^\transpose]^\transpose$, and from \eqref{eq: DT-system-process noise}, note that 
		\begin{align}
		\mbf{y}=&\mbf{O}\bsx^*_0+{\mbf{J}}_S\bsu^*_S+\sum_{j\in S^c}{\mbf{J}}_j\bsu^*_j+{\mbf{J}_{w}\mbf{w}+\mbf{v}},\label{eq: measurement model4}
	\end{align}
	where $\mbf{w}\triangleq[\mbf{w}[0]^\transpose\ldots \mbf{w}[N]^\transpose]^\transpose$ and $\mbf{v}=[\mbf{v}[0]^\transpose\ldots \mbf{v}[N]^\transpose]^\transpose$. The noise response matrix $\bf{J}_w$ is obtained by replacing $\mbf{H}_k^{(1)}$ in $\mbf{J}_1$, given by \eqref{eq: system matrices}, with $\mbf{C}\bfA^{k-1}$, for all $k\geq 0$. Because $\mbf{w}$ and $\mbf{v}$ are Gaussian, it follows that $\mbf{J}_{w}\mbf{w}+\mbf{v}\sim \mathcal{N}(\mbf{0},\boldsymbol{\Sigma}_{\wtilde{\mbf{v}}})$, where $\boldsymbol{\Sigma}_{\wtilde{\mbf{v}}}=[\mbf{J}_{w}\mbf{w}+\mbf{v}][\mbf{J}_{w}\mbf{w}+\mbf{v}]^\transpose$. Finally, define $\widetilde{\sigma}^2=\|\boldsymbol{\Sigma}_{\wtilde{\mbf{v}}}\|_2$. 
	
	Suppose that we solve the group LASSO problem in \eqref{eq: group LASSO opt} for the model in \eqref{eq: measurement model4}. Then, Theorems in \ref{thm: location-consistency} and \ref{thm: l2-consistency} hold true for $\sigma^2=\widetilde{\sigma}^2$. However, the modified noise variance ($\widetilde{\sigma}^2$) could be large depending on the system matrices in \eqref{eq: DT-system-process noise}.

\subsection{Mutual Incoherence: Frequency Domain}
Thus far we discussed the location recovery- and estimation-consistency of the group LASSO estimator in \eqref{eq: group LASSO opt} assuming that
assumptions in (A1)-(A3) hold of which the first two are satisfied by stable dynamical systems with $\sysmatS$ having no invariant zeros\footnote{Systems having invariant zeros lie in a zero measure set \cite{BDOA:MD-09}.}. However, (A3) might not hold for arbitrary systems, and moreover, verifying \eqref{eq: mutual incoherence} can be computationally demanding when either $N$ (the measurement horizon) or $n$ (dimension of system matrix $\mbf{A}$) is large. In what follows, we bound $\max_{j \in S^c}\|\mbf{J}_j^\transpose\bsPsi_S(\bsPsi_S^\transpose\bsPsi_S)^+\|_2$ in \eqref{eq: mutual incoherence} using a quantity that depends on the transfer function matrices associated with $(\mbf{A},\mbf{B}_S,\mbf{C})$ and $(\mbf{A},\mbf{b}_j,\mbf{C})$, for $j \in S^c$. The advantage is that this upper bound can computed efficiently, as it depends only on the lower dimensional system matrices but not on $N$. 

To simplify the exposition, we assume $\mbf{x}_0=\mbf{0}$; thus, $\bsPsi_S=\mbf{J}_S$. Similar to the transfer matrix $\mcal{G}_S[z]$ in \eqref{eq: TF matrix}, define $\mcal{G}_j[z]=\mbf{C}(z\mbf{I}-\bfA)^{-1}\mbf{b}_j$ and $\mcal{G}_{S^c}[z]=\mbf{C}(z\mbf{I}-\bfA)^{-1}\mbf{B}_{S^c}$, where $\mbf{B}_{S^c}$ is the matrix composed of columns $\mbf{b}_j$, with $j \in S^c$. 
\begin{theorem} \label{thm: frequency MIC} Assumption (A6) holds if $\mathrm{nRank} \mathcal{Z}_S\!=\!n\!+\!m^*$ and 
\begin{align}
				\max_{j \in S^c}\max_{\{z\in \mathbb{C}:|z|=1\}}\norm{\mathcal{G}^{+}_S[z]\mcal{G}_j[z]}_2&\leq \alpha/m^*<1. \label{eq: frequency mutual incoherence}
			\end{align}
\end{theorem}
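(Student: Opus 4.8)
Throughout keep the simplification $\mbf{x}_0=\mbf{0}$, so that $\bsPsi_S=\mbf{J}_S$ and the quantity to be controlled is $\mathrm{MIC}=\max_{j\in S^c}\norm{\mbf{J}_j^\transpose\mbf{J}_S(\mbf{J}_S^\transpose\mbf{J}_S)^+}_2$. The first step is the pseudoinverse identity $\mbf{J}_S(\mbf{J}_S^\transpose\mbf{J}_S)^+=(\mbf{J}_S^+)^\transpose$, which, together with transpose invariance of the spectral norm, yields $\mathrm{MIC}=\max_{j\in S^c}\norm{\mbf{J}_S^+\mbf{J}_j}_2$. The operator $\mbf{J}_S^+\mbf{J}_j$ sends an input trajectory $\mbf{u}_j$ to the minimum–norm least–squares trajectory $\widehat{\mbf{u}}_S$ reproducing the output $\mbf{J}_j\mbf{u}_j$ through the $S$–channel; this is precisely what $\mathcal{G}_S^+[z]\mathcal{G}_j[z]$ does frequency–by–frequency in the $z$–domain, which is why one expects the frequency bound to transfer.

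Second, I would diagonalize in the DFT basis. The matrices $\mbf{J}_S$ and $\mbf{J}_j$ in \eqref{eq: system matrices} are finite, lower–triangular block–Toeplitz matrices whose block symbols on $|z|=1$ are exactly the transfer functions $\mathcal{G}_S[z]$ and $\mathcal{G}_j[z]$, since the blocks are the Markov parameters $\mbf{H}^{(\cdot)}_l=\mbf{C}\mbf{A}^{l-1}\mbf{b}$. Let $\mbf{F}$ be the unitary $(N+1)$–point block DFT, $\omega_k=e^{2\pi\mathrm{i}k/(N+1)}$, and let $\mbf{C}_S,\mbf{C}_j$ be the block–circulant matrices defined by $\mbf{F}\mbf{C}_S\mbf{F}^*=\blkdiag(\mathcal{G}_S[\omega_0],\dots,\mathcal{G}_S[\omega_N])$ and likewise for $\mbf{C}_j$. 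Here the hypothesis $\mathrm{nRank}\,\mathcal{Z}_S=n+m^*$ enters: it is equivalent to left invertibility of $(\mbf{A},\mbf{B}_S,\mbf{C})$, i.e. $\mathcal{G}_S[z]$ has full column rank $m^*$ for all $z$ off a finite set, and the finiteness of the right–hand side of \eqref{eq: frequency mutual incoherence} rules out invariant zeros on the unit circle; hence every $\mathcal{G}_S[\omega_k]$ has full column rank and $\mathcal{G}_S^+[\omega_k]\mathcal{G}_S[\omega_k]=\mbf{I}_{m^*}$. Since the pseudoinverse of a block–circulant matrix is block–circulant with block–diagonalization $\blkdiag(\mathcal{G}_S^+[\omega_k])$, we get $\mbf{F}(\mbf{C}_S^+\mbf{C}_j)\mbf{F}^*=\blkdiag\big(\mathcal{G}_S^+[\omega_k]\mathcal{G}_j[\omega_k]\big)_k$, whence
\begin{align*}
\norm{\mbf{C}_S^+\mbf{C}_j}_2 &=\max_{0\le k\le N}\norm{\mathcal{G}_S^+[\omega_k]\mathcal{G}_j[\omega_k]}_2 \\
&\le \max_{|z|=1}\norm{\mathcal{G}_S^+[z]\mathcal{G}_j[z]}_2\le \frac{\alpha}{m^*}.
\end{align*}

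Third, I would transfer this back to the linear–convolution matrices $\mbf{J}_S,\mbf{J}_j$. The difference $\mbf{J}_S-\mbf{C}_S$ is the ``wrap–around'' correction localized in the upper–right corner, built from tail Markov parameters $\mbf{C}\mbf{A}^{l}\mbf{b}_S$ with $l$ near $N$; under (A1) the system is asymptotically stable, so these decay geometrically and $\norm{\mbf{J}_S-\mbf{C}_S}_2,\ \norm{\mbf{J}_j-\mbf{C}_j}_2=O(\rho^{N})$ for some $\rho<1$. Feeding this into a Wedin–type perturbation bound for pseudoinverses—legitimate because $\sigma_{\min}$ of $\mbf{C}_S$ on its range is bounded below uniformly in $N$ by $\min_{|z|=1}\sigma_{\min}(\mathcal{G}_S[z])>0$, again a consequence of the rank hypothesis and the absence of unit–circle zeros—yields $\norm{\mbf{J}_S^+\mbf{J}_j-\mbf{C}_S^+\mbf{C}_j}_2=O(\rho^{N})$, so $\mathrm{MIC}\le\alpha/m^*+O(\rho^{N})$; since $\alpha<1$ in \eqref{eq: frequency mutual incoherence} the bound \eqref{eq: mutual incoherence} holds once the geometric remainder is absorbed (and exactly in the horizon–$\to\infty$ limit). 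A slicker alternative for a clean inequality is to invoke the classical compression bound $\norm{T_N(f)}_2\le\sup_{|z|=1}\norm{f[z]}_2$ for truncated block–Toeplitz matrices, applied to $\mbf{J}_j$ and to a Toeplitz factor of $\mbf{J}_S^+$ obtained from a spectral factorization $\mathcal{G}_S^\ast\mathcal{G}_S=\Lambda^\ast\Lambda$ with $\Lambda$ outer; this sidesteps the DFT but needs the same rank hypothesis to make $\Lambda$ invertible.

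The main obstacle is precisely the third step: reconciling the pseudoinverse of the \emph{finite} linear–convolution operator $\mbf{J}_S$ with the circularly structured (DFT–diagonalizable) object $\mbf{C}_S$, i.e. propagating the $O(\rho^N)$ truncation error through an inversion whose conditioning must be certified uniformly in $N$. The remaining ingredients—the algebraic reduction to $\norm{\mbf{J}_S^+\mbf{J}_j}_2$, the block–diagonalization, and reading $\mathrm{nRank}\,\mathcal{Z}_S=n+m^*$ as left invertibility of $(\mbf{A},\mbf{B}_S,\mbf{C})$—are routine.
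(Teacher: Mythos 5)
Your opening reduction is fine: $\mbf{J}_S(\mbf{J}_S^\transpose\mbf{J}_S)^+=(\mbf{J}_S^+)^\transpose$ and transpose-invariance of the spectral norm give $\mathrm{MIC}=\max_{j\in S^c}\|\mbf{J}_S^+\mbf{J}_j\|_2$, and you correctly name the crux of the problem (relating the pseudoinverse of the \emph{finite} convolution matrix to a frequency-domain object). But the main route you propose does not close. The claim $\norm{\mbf{J}_S-\mbf{C}_S}_2=O(\rho^N)$ is false: the circulant completion of a lower-triangular block-Toeplitz matrix places the \emph{leading} Markov parameters $\mbf{H}^{(S)}_1,\mbf{H}^{(S)}_2,\dots$ in the far upper-right corner (the $(0,N)$ block of a circulant generated by $(\mbf{H}_0,\dots,\mbf{H}_N)$ is $\mbf{H}_1$, not a tail term), so the difference has operator norm $\Theta(1)$. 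Toeplitz and circulant matrices are asymptotically equivalent only in the normalized Frobenius sense, which is useless for a spectral-norm/Wedin perturbation argument. And even if the perturbation step worked, you would obtain $\mathrm{MIC}\le\alpha/m^*+O(\rho^N)$, which is strictly weaker than the theorem's clean finite-$N$ inequality; the theorem asserts that the frequency-domain bound implies \eqref{eq: mutual incoherence} exactly, not asymptotically.

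The paper's proof is essentially the ``compression bound'' you relegate to a parenthetical, and it avoids circulants entirely. Under $\mathrm{nRank}\,\mathcal{Z}_S=n+m^*$, the delayed left inverse satisfies $\mathcal{G}_S^+[z]\mathcal{G}_S[z]=z^{-d}\mbf{I}$, and (citing a deadbeat-filter construction together with uniqueness of the pseudoinverse) the finite-horizon map $\mbf{u}_j^*\mapsto\bsPsi_S^\dagger\mbf{J}_j\mbf{u}_j^*$ is identified with the restriction to $\{0,\dots,N\}$ of the infinite-horizon convolution with symbol $z^{-d}\mathcal{G}_S^+[z]\mathcal{G}_j[z]$. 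Parseval then gives $\sqrt{\sum_{k\ge 0}\|\wtilde{\mbf{y}}[k]\|_2^2}\le\sup_{|z|=1}\|\mathcal{G}_S^+[z]\mathcal{G}_j[z]\|_2\,\|\mbf{u}_j^*\|_2$, and since $\sum_{k=0}^{N}\le\sum_{k=0}^{\infty}$ the truncation can only decrease the norm, yielding $\|\bsPsi_S^\dagger\mbf{J}_j\|_2\le\sup_{|z|=1}\|\mathcal{G}_S^+[z]\mathcal{G}_j[z]\|_2\le\alpha/m^*$ with no residual term. If you want to salvage your write-up, drop the circulant step and instead prove the identification of $\bsPsi_S^\dagger\mbf{J}_j$ with the compressed infinite-horizon inverse filter — that identification, not a Toeplitz-circulant approximation, is the content of the obstacle you flagged.
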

\begin{proof}
    See Appendix.
\end{proof}
We refer to the expression in \eqref{eq: frequency mutual incoherence} as the frequency domain mutual incoherence condition. 
Thus to verify Assumption (A6), we need to check if the worst case gain of the transfer matrix $\mathcal{G}^{+}_S[z]\mcal{G}_j[z]$ is bounded above by $\alpha/m^*$; see Fig.~\ref{fig: MIC plot}. If computing \eqref{eq: frequency mutual incoherence} is prohibitive for each $j \in S^c$, we can resort to the weaker condition: $\max_{\{z\in \mathbb{C}:|z|=1\}}\norm{\mathcal{G}^{+}_S[z]\mcal{G}_{S^c}[z]}_2\leq \alpha/m^*<1$. To appreciate the condition in \eqref{eq: frequency mutual incoherence}, we take $\mathcal{Z}$-transform of system in \eqref{eq: DT-system-state}-\eqref{eq: DT-system-measurements}, and then note that 
\begin{align*}
    \mbf{y}[z]=\mcal{G}_S[z]\mbf{u}_S[z]+\sum_{j\in S^c}\mcal{G}_j[z]\mbf{u}_j[z], \quad \forall z \notin \text{spec}(\mbf{A}). 
\end{align*}
By pre-multiplying the above identity with $\mcal{G}^{+}_S[z]$, we have
\begin{align*}
    \mcal{G}^{+}_S[z]\mbf{y}[z]&=\mbf{u}_S[z]+\sum_{j\in S^c}\mcal{G}^{+}_S[z]\mcal{G}_j[z]\mbf{u}_j[z] \quad \forall z \notin \text{spec}(\mbf{A})\\
    &=\mbf{u}_S[z]+\mcal{G}^{+}_S[z]\mcal{G}_{S^c}[z]\mbf{u}_{S^c}[z],
\end{align*}
Thus to recover $\mbf{u}_S[z]$ accurately, the gain $\|\mathcal{G}^{+}_S[z]\mcal{G}_j[z]\|_2$ or $\|\mathcal{G}^{+}_S[z]\mcal{G}_{S^c}[z]\|_2$ needs to be small. 

\begin{figure}[!t]
	\centering\includegraphics[width=1.0\linewidth]{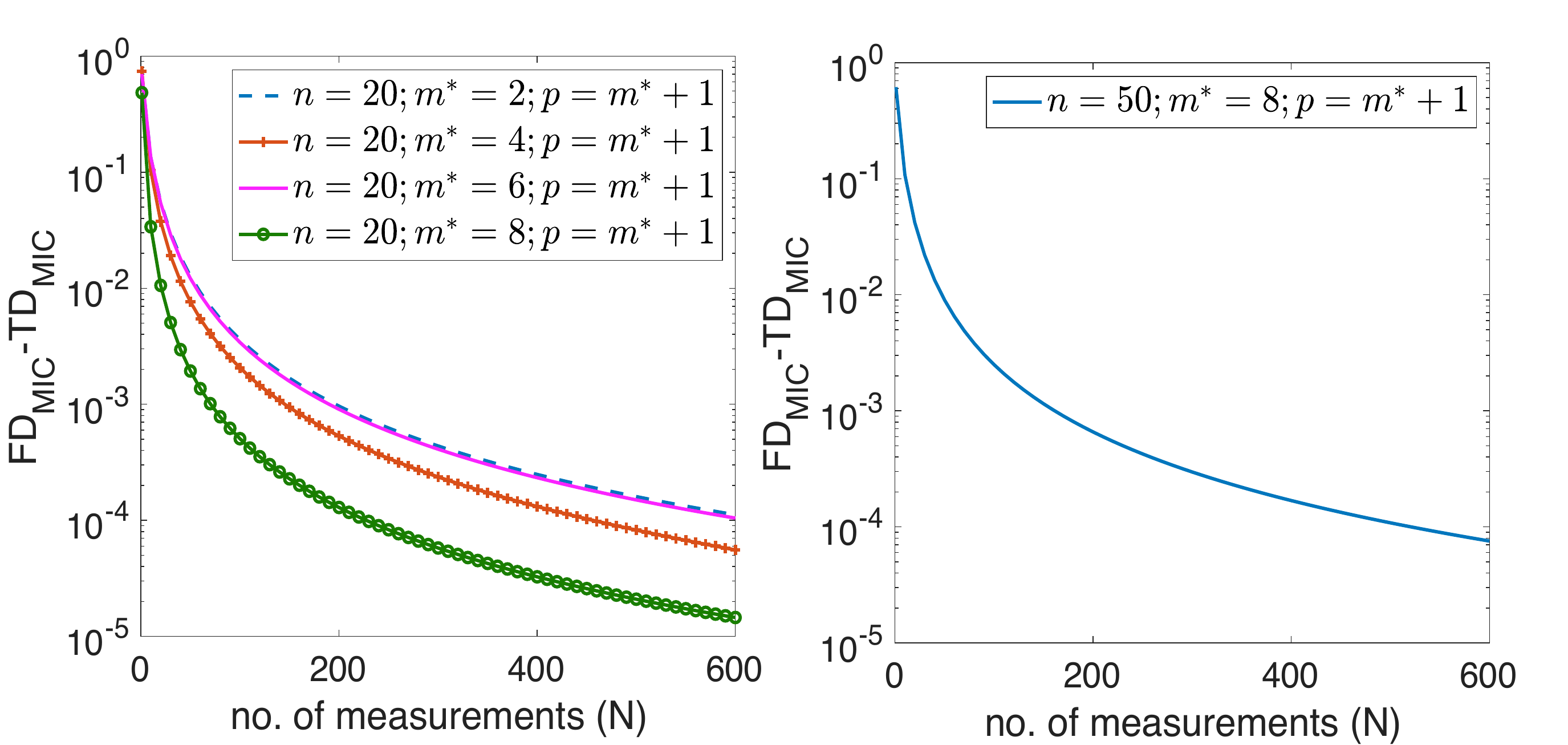}
	\caption{\footnotesize Illustration of Theorem \ref{thm: frequency MIC} for system matrices generated using \texttt{drss} command in MATLAB. The number of possible sources $m=10$. In both panels, the y-axis, $FD_\text{MIC}-TD_\text{MIC}$, is the error between frequency- and time-domain mutual incoherences. (Left panel) We fix $n=20$ and plot $FD_\text{MIC}-TD_\text{MIC}$ for several values of $m^*$. (Right panel) For a large dimensional matrix $\mbf{A}$, we fix $m^*$ and $p$, and plot $FD_\text{MIC}-TD_\text{MIC}$ for several values of system dimension $n$.  In both the panels, the error is positive and is monotone in $N$ implying that $FD_\text{MIC}\geq TD_\text{MIC}$, as predicted by Theorem \ref{thm: frequency MIC}.}
	\label{fig: MIC plot}
\end{figure}

We conclude this section by highlighting three special cases where \eqref{eq: frequency mutual incoherence} holds: (i) $\mathcal{R}(\mathcal{G}_{S^c}[z])\subseteq \mathcal{R}^{\perp}(\mathcal{G}^+_{S}[z])=\mathcal{R}^{\perp}(\mathcal{G}^T_{S}[z])$, for all $|z|=1$. In other words, the columns of $\mathcal{G}_{S^c}[z]$ lie in the left nullspace of $\mathcal{G}_{S}[z]$; (ii) $\mathcal{G}[z]=[\mathcal{G}_S[z]\,\,\mathcal{G}_{S^c}[z]]$ is all-pass\footnote{A real rational transfer function matrix $G[z]$ is all-pass if $\mathcal{G}[z]\mathcal{G}[1/z]=\mbf{I}$.}; and (iii) $\mcal{G}_{S^c}[z]=\alpha\mcal{G}_{S}[z]$. The first two cases are rather strong and does not allow columns of $\mcal{G}_{S^c}[z]$ to be in the range space of $\mcal{G}_{S}[z]$. Instead, (ii) models another extreme where the range spaces of $\mcal{G}_{S}[z]$ and $\mcal{G}_{S^c}[z]$ are aligned with each, modulo the factor $\alpha \in [0,1)$. The latter case in the compressed sensing literature is referred to as overcomplete dictionaries \cite{JJF:04}.

	\section{Simulations}
	{We illustrate the performance of the group LASSO estimator on a large-scale power network and a random system. The following proposition states that the unknown input and initial state can be estimated in two stages. Consequently, we use off-the-shelf ADMM \cite{SELP09} to estimate the input first and then use this estimate to compute the initial state.} 
	\begin{proposition}\label{prop: decoupled estimator}
	Suppose that system in \eqref{eq: DT-system-state}-\eqref{eq: DT-system-measurements} is observable. The optimization problem \eqref{eq: group LASSO opt} is equivalent to 
		\begin{align}
			\bshu&=\argmin_{\substack{\bsu \in \real^{mT}}}\frac{1}{2T}\norm{\bs{\Pi}(\mbf{y}-\mbf{J}\bsu)}_2^2+\lambda_T \sum_{j=1}^m\norm{\bsu_j}_2, \label{eq: u_hat sub problem} \\
			\bshx_0&=\mbf{O}^+(\mbf{y}-\mbf{J}\bshu),  \label{eq: x_hat sub problem}
		\end{align}
		where $\mbf{O}^+=(\mbf{O}^\transpose\mbf{O})^{-1}\mbf{O}^\transpose$ and $\bs{\Pi}=\mbf{I}-\mbf{O}\mbf{O}^+$. 
	\end{proposition}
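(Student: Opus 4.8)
The plan is to prove the equivalence by \emph{partial minimization}: profile out $\mbf{x}_0$ from \eqref{eq: group LASSO opt} and show that the residual optimization in $\mbf{u}$ is exactly \eqref{eq: u_hat sub problem}. First I would write the objective of \eqref{eq: group LASSO opt} as $f(\mbf{x}_0,\mbf{u}) = \frac{1}{2T}\norm{(\mbf{y}-\mbf{J}\mbf{u}) - \mbf{O}\mbf{x}_0}_2^2 + \lambda_T\sum_{j=1}^m \norm{\mbf{u}_j}_2$ and observe that, for any fixed $\mbf{u}$, only the quadratic term depends on $\mbf{x}_0$, so $\min_{\mbf{x}_0} f(\mbf{x}_0,\mbf{u})$ is an ordinary least-squares problem with design matrix $\mbf{O}$ and response $\mbf{y}-\mbf{J}\mbf{u}$. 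Since $f\geq 0$ is bounded below, the iterated minimization $\min_{\mbf{x}_0,\mbf{u}} f = \min_{\mbf{u}}\min_{\mbf{x}_0} f$ is legitimate, so I can work with the profiled objective.

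The key step is to invoke observability. Because the system in \eqref{eq: DT-system-state}--\eqref{eq: DT-system-measurements} is observable, $\mbf{O}$ in \eqref{eq: system matrices} has full column rank, hence $\mbf{O}^\transpose\mbf{O}$ is invertible and the inner least-squares problem has the \emph{unique} minimizer $\mbf{x}_0(\mbf{u}) = (\mbf{O}^\transpose\mbf{O})^{-1}\mbf{O}^\transpose(\mbf{y}-\mbf{J}\mbf{u}) = \mbf{O}^+(\mbf{y}-\mbf{J}\mbf{u})$. Substituting this back, the residual becomes $(\mbf{y}-\mbf{J}\mbf{u}) - \mbf{O}\mbf{O}^+(\mbf{y}-\mbf{J}\mbf{u}) = (\mbf{I}-\mbf{O}\mbf{O}^+)(\mbf{y}-\mbf{J}\mbf{u}) = \bs{\Pi}(\mbf{y}-\mbf{J}\mbf{u})$, so $\min_{\mbf{x}_0} f(\mbf{x}_0,\mbf{u}) = \frac{1}{2T}\norm{\bs{\Pi}(\mbf{y}-\mbf{J}\mbf{u})}_2^2 + \lambda_T\sum_{j}\norm{\mbf{u}_j}_2$, which is precisely the objective of \eqref{eq: u_hat sub problem}.

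Finally I would assemble the equivalence: a pair $(\bshx_0,\bshu)$ solves \eqref{eq: group LASSO opt} if and only if $\bshu$ minimizes the profiled objective, i.e. solves \eqref{eq: u_hat sub problem}, and $\bshx_0$ attains the inner minimum at $\bshu$, i.e. $\bshx_0 = \mbf{O}^+(\mbf{y}-\mbf{J}\bshu)$, which is \eqref{eq: x_hat sub problem}. This also handles non-uniqueness transparently: even though \eqref{eq: u_hat sub problem} may have many minimizers (the effective design $\bs{\Pi}\mbf{J}$ need not have full column rank when the delay $d>0$), every optimal $\bshu$ determines its companion $\bshx_0$ uniquely through \eqref{eq: x_hat sub problem}.

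There is no substantial obstacle here; the only point requiring care is the appeal to observability to guarantee invertibility of $\mbf{O}^\transpose\mbf{O}$ (equivalently, full column rank of $\mbf{O}$). Without it the inner minimizer would be determined only modulo $\Ker(\mbf{O})$, the closed-form expressions $\mbf{O}^+ = (\mbf{O}^\transpose\mbf{O})^{-1}\mbf{O}^\transpose$ and \eqref{eq: x_hat sub problem} would fail, and $\bs{\Pi}$ would have to be reinterpreted as the orthogonal projector onto $\mathcal{R}(\mbf{O})^{\perp}$; the rest of the argument is routine.
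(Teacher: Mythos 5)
Your argument is correct. The paper disposes of this proposition in one line, asserting that it "follows from the KKT conditions": concretely, the stationarity condition \eqref{eq: KKT 1a} with full column rank of $\mbf{O}$ forces $\bshx_0=\mbf{O}^+(\mbf{y}-\mbf{J}\bshu)$, and substituting this into \eqref{eq: KKT 2a} turns the remaining optimality conditions into $-\tfrac{1}{T}\mbf{J}_j^\transpose\bs{\Pi}(\mbf{y}-\mbf{J}\bshu)+\lambda_T\bshz_j=\mbf{0}$, which (using that $\bs{\Pi}$ is a symmetric idempotent, so $\mbf{J}_j^\transpose\bs{\Pi}=(\bs{\Pi}\mbf{J}_j)^\transpose\bs{\Pi}$) are exactly the KKT conditions of \eqref{eq: u_hat sub problem}. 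You instead profile out $\mbf{x}_0$ at the level of the objective (partial minimization / variable projection), which reaches the same conclusion without ever writing subgradients: the inner least-squares problem has the unique minimizer $\mbf{O}^+(\mbf{y}-\mbf{J}\mbf{u})$, and the profiled objective is visibly that of \eqref{eq: u_hat sub problem}. Your route is the more elementary of the two and, as you note, makes the handling of non-uniqueness in $\bshu$ transparent, since the inner minimizer is unique for every $\mbf{u}$; the KKT route has the mild advantage of directly exhibiting the optimality system that the paper's ADMM implementation works with. Two cosmetic remarks: the identity $\min_{\mbf{x}_0,\mbf{u}}f=\min_{\mbf{u}}\min_{\mbf{x}_0}f$ holds for iterated infima regardless of boundedness below (boundedness only guarantees finiteness), and the appeal to observability should implicitly carry the assumption $N+1\geq n$ so that the $(N+1)$-block observability matrix inherits rank $n$ from the system-theoretic observability; the paper makes the same implicit assumption.
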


	The proof follows from the KKT conditions \cite{NS:JF:TH:RT:13}. The inputs to the ADMM \cite{SELP09} are the system matrices $(\mbf{A},\mbf{B}, \mbf{C})$, the measurement $\mbf{y}$, and the tuning parameter $\lambda_T\geq 0$. Finally, we note that the two-stage estimation method is one way to implement the group LASSO numerically. One may also use other numerical algorithms to estimate $(\mbf{x}_0^*, \mbf{u}^*)$ in one shot. 
	
	We evaluate the group LASSO estimator's localization performance using the false-positive rate (FPR):= $|S^c\cap \what{S}|/|S^c|$, the false-negative rate (FNR):= $|S\cap \what{S}^c|/|S|$, and the exact recovery rate (ERR):= $(|S\cap \what{S}|+|S^c\cap \what{S}^c|)/m$. Recall that FPR and FNR, respectively, measure the proportion of inputs that are falsely identified and left out. Instead, we quantify the estimation performance using the error metrics: $\|\mbf{x}_0^*-\what{\mbf{x}}_0\|_2/\|\mbf{x}_0^*\|_2$ and $\|\mbf{u}^*-\what{\mbf{u}}\|_2/\|\mbf{u}\|_2$. 
	For the test cases below, the results are averaged over 50 runs. 

\smallskip 
		\textit{(Power system)} We apply our estimator in \eqref{eq: u_hat sub problem} to localize the sources of forced oscillatory (FO) inputs in the IEEE 68 bus system 16 machine system (see Fig.~\ref{fig: IEEE_68bus_system}). Each machine (or generator) consists of ten states, including rotor angle, speed, and the states of the AVR (automatic voltage regulator) and PSS. We model FOs as inputs injected by the AVRs and use bus voltage magnitudes as measurements.  For the sampling time $\delta t=0.1$, we obtained the system matrices $\mbf{A} \in \mathbb{R}^{160\times 160}$,  $\mbf{B}\in \mathbb{R}^{160\times 16}$, and $\mbf{C}\in \mathbb{R}^{p\times 160}$, where $p\leq 68$, using the Power System Toolbox \cite{JHC:KWC-92}. Among $m=16$ possible inputs, we assume $m^*=3$ with the following inputs: 
		$u^*_1[k]=0.5\sin[(2\pi f\delta t) \,k]+w[k]$, $u^*_6[k]=0.6\sin[(2\pi f\delta t) \,k]+w[k]$, and $u^*_{13}[k]=0.7\sin[(2\pi f\delta t) \,k]+w[k]$, where $f=1.5\,\mathcal{U}(0,1)$ and $w[k]\sim \mathcal{N}(0,0.05^2)$. We set $p=4$ and choose sensor locations arbitrarily with the only exception that these are non-collocated with inputs (shown in Fig.~\ref{fig: IEEE_68bus_system}). Let $\mbf{x}_0=\mbf{0}$ (the non-zero case is considered in the subsequent case). Finally, we let $N=100$ and the noise variance $\sigma^2=0.01$. 
		
		In Fig.~\ref{fig: FPR_68bus_system}, we plot the FPR, FNR, and ERR with respect to $\lambda_T$. As expected, the FNR increases with $\lambda_T$ whereas the FPR decreases with $\lambda_T$, although not monotonically. From the bottom left panel, we can infer that values of $\lambda_T\in (0.3,\, 0.4)$ yield maximum ERR. In the bottom right panel, note that for $\lambda_T=0.288$, the group LASSO estimator accurately localized inputs among 40 out of 50 runs. In Fig.~\ref{fig: waveforms_68bus_system}, for a measurement realization where the group LASSO estimator identified true locations, we plot the inputs estimated by the group LASSO and the reduced model based OLS estimators.

  	\begin{figure}[!t]
 	\centering\includegraphics[width=0.8\linewidth]{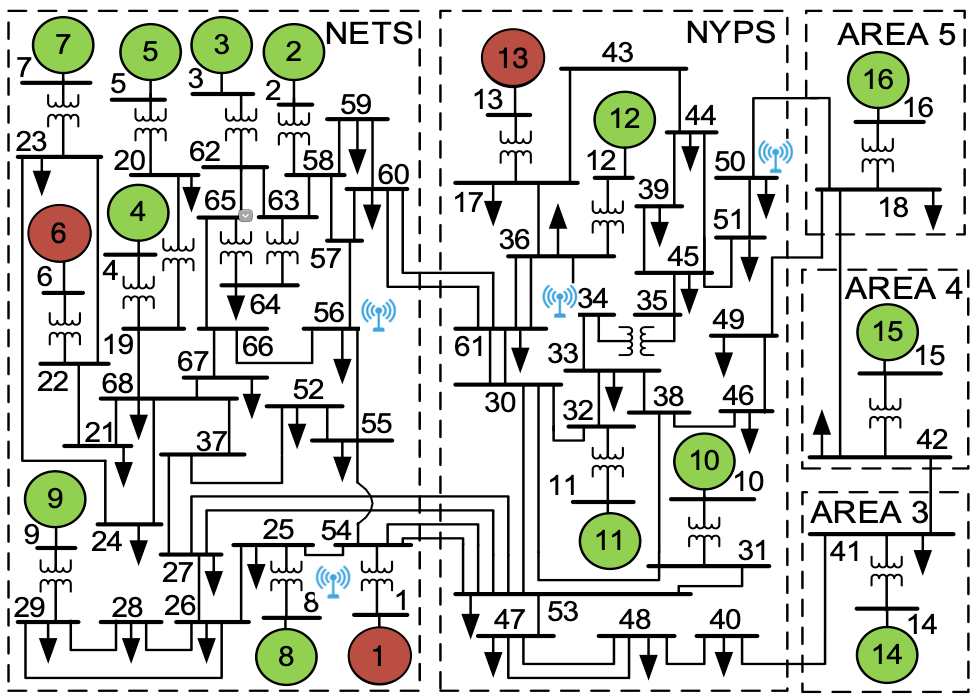}
  	\caption{\scriptsize IEEE 16 machine 68 bus system \cite{68B16M}. Circles, arrows, and curly windings, respectively, denote generator buses, load buses, and transformers. The FO input enters through set points of AVRs associated with the generators at buses $\{1, 6, 13\}$ (red circles). Sensors are located at buses $\{8, 34, 50, 56\}$.}
  	\label{fig: IEEE_68bus_system}
  \end{figure}
  
    \begin{figure}[!t]
 	\centering\includegraphics[width=0.97\linewidth]{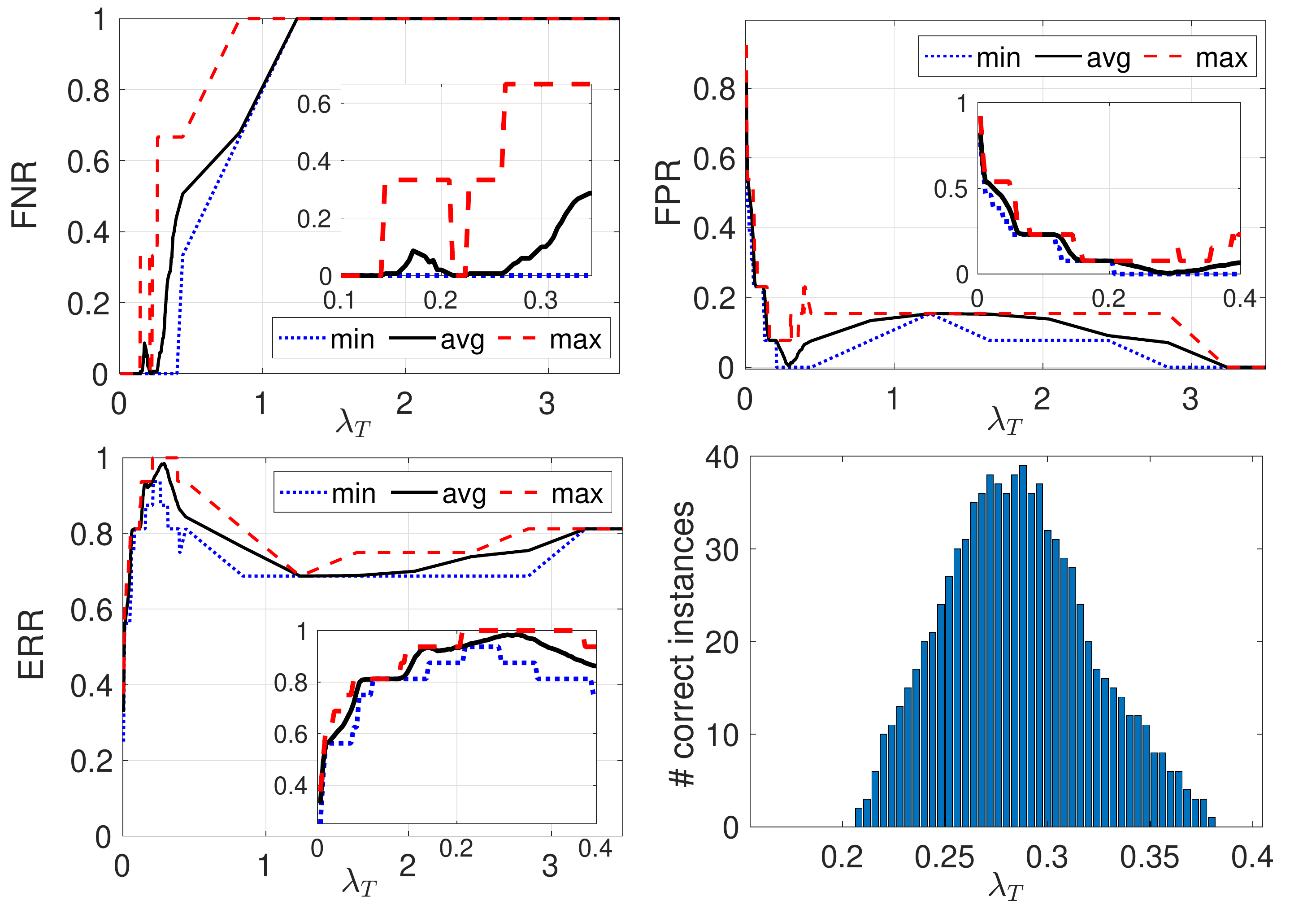}
  	\caption{\scriptsize False negative rate (FNR), false positive rate (FPR), exact recovery rate (ERR), and the number of exactly recovered instances among 50 runs of the IEEE 16 machine 68 bus system data using the group LASSO.}
  	\label{fig: FPR_68bus_system}
  \end{figure}
    \begin{figure}
 	\centering\includegraphics[width=0.97\linewidth]{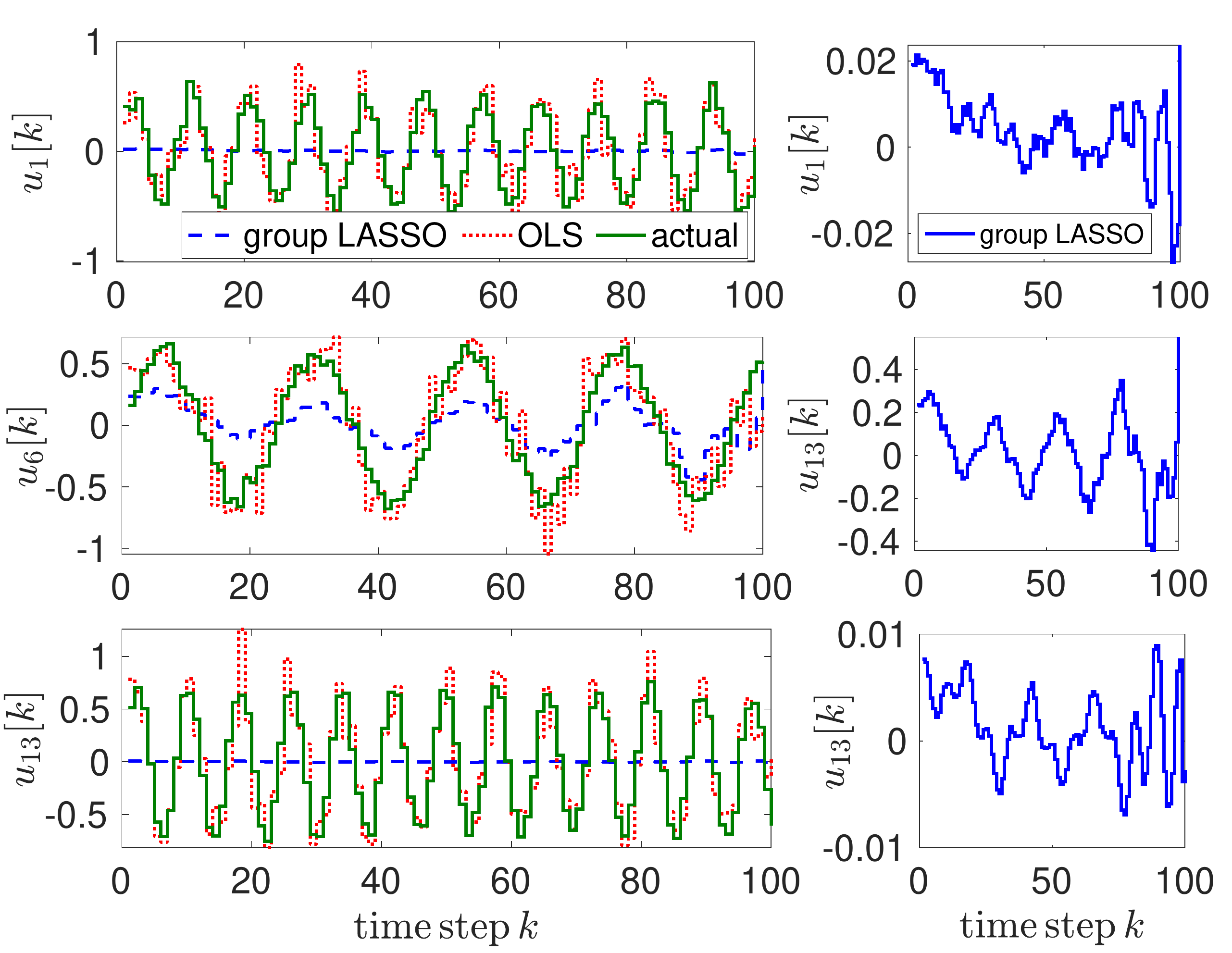}
  	\caption{\scriptsize FO inputs recovered by the group LASSO and OLS estimators. We used \eqref{eq: OLS estimate} to compute the OLS estimate using the locations recovered by the group LASSO. (Left panel) As predicted by Theorem \ref{thm: l2-consistency}, the OLS provides a better estimate than the LASSO estimator. (Right panel) zoomed plot of the group LASSO estimate.}
  	\label{fig: waveforms_68bus_system}
  \end{figure}


	\textit{(Large-scale random system)}
    Following \cite{SMF-FG-SG-AYK-DS:19}, we generate matrices as follows: $\mbf{A}_{ij}\overset{\text{iid}}{\sim} \mathcal{N}(0,1/{n})$; $\mathbf{C}_{ij}\overset{\text{iid}}{\sim} \mathcal{N}(0,1)$; and $\mathbf{B}^\transpose=\begin{bmatrix}
        \mbf{I}_m^\transpose&
        \mbf{0}^\transpose 
    \end{bmatrix}$. We let $\mbf{x}_0\sim \mathcal{N}(\mbf{0},\mbf{I}_n)$ and the measurement noise variance parameter $\sigma=0.01$. We set $n=50$, $m=30$, and $m^*=5$. The active set $S=\{1,2,3,4,5\}$ and ${u}_j[k]$ is sampled uniformly on $[-2,2]$, for all $j \in S$ and $k \in [N]$. The sensors measures the first $p (\le n)$ states. In Fig.~\ref{fig: rand_sys_error}, for $p=15$, we plot the average estimation error metrics as a function of the measurement horizon ($N$). In both the panels, estimation errors remain uniform across $N$ because the number of (to be estimated) inputs also increase with $N$. Given the relation in \eqref{eq: x_hat sub problem}, the estimation error of $\mbf{x}^*_0$ is slightly higher than that of the unknown input. Finally, for greater estimation accuracy, one can always use the reduced model-based OLS estimator.
	
	In Fig.~\ref{fig: MIC_rand_systems}, we show the average mutual incoherence (MIC) in \eqref{eq: mutual incoherence} as a function of $p$, for two cases: $\mbf{x}_0=\mbf{0}$ and $\mbf{x}_0\ne \mbf{0}$. We computed both $\ell_1$- and $\ell_2$-norm based MICs. As pointed out in Section \ref{sec: main results}, and confirmed by our plots in the left panel of Fig.~\ref{fig: MIC_rand_systems}, $\ell_2$-norm based MIC assumption is stronger than the $\ell_1$-norm. Further, when $\mbf{x}_0= \mbf{0}$, the MIC is satisfied (that is, less than one) for as few as $p=6$ sensors. Here, $p=m^*+1$. Instead, when $\mbf{x}_0\ne \mbf{0}$, we need at least $p=18$ sensors to ensure that MIC is below one. Given $m^*$, theoretical relationships between $p$ and MIC is left for future research. 
	    \begin{figure}[!t]
		\centering\includegraphics[width=1.0\linewidth]{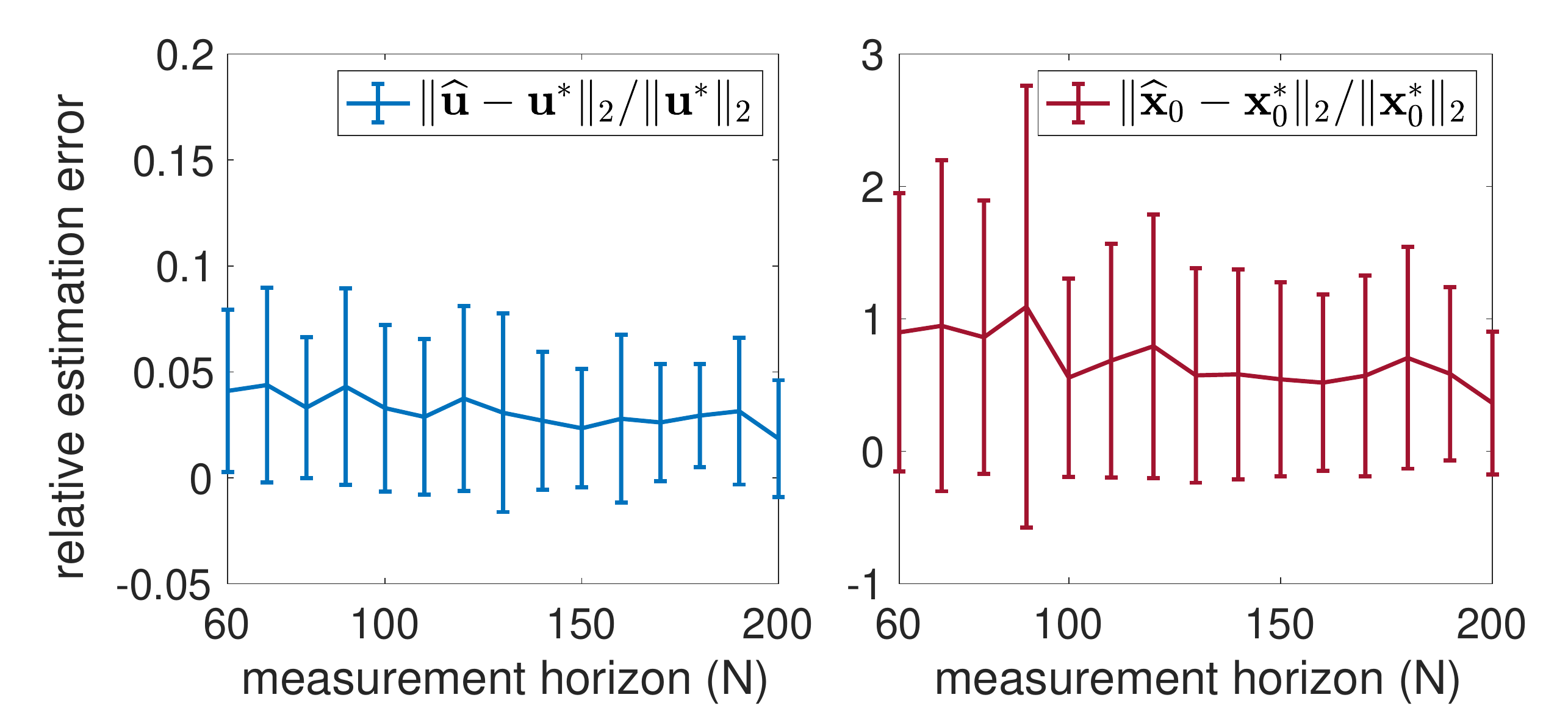}
		\caption{\scriptsize Relative estimation error. Left panel: unknown inputs. Right panel: initial state.}
		\label{fig: rand_sys_error}
	\end{figure}
		    \begin{figure}[!t]
	\centering\includegraphics[width=1.0\linewidth]{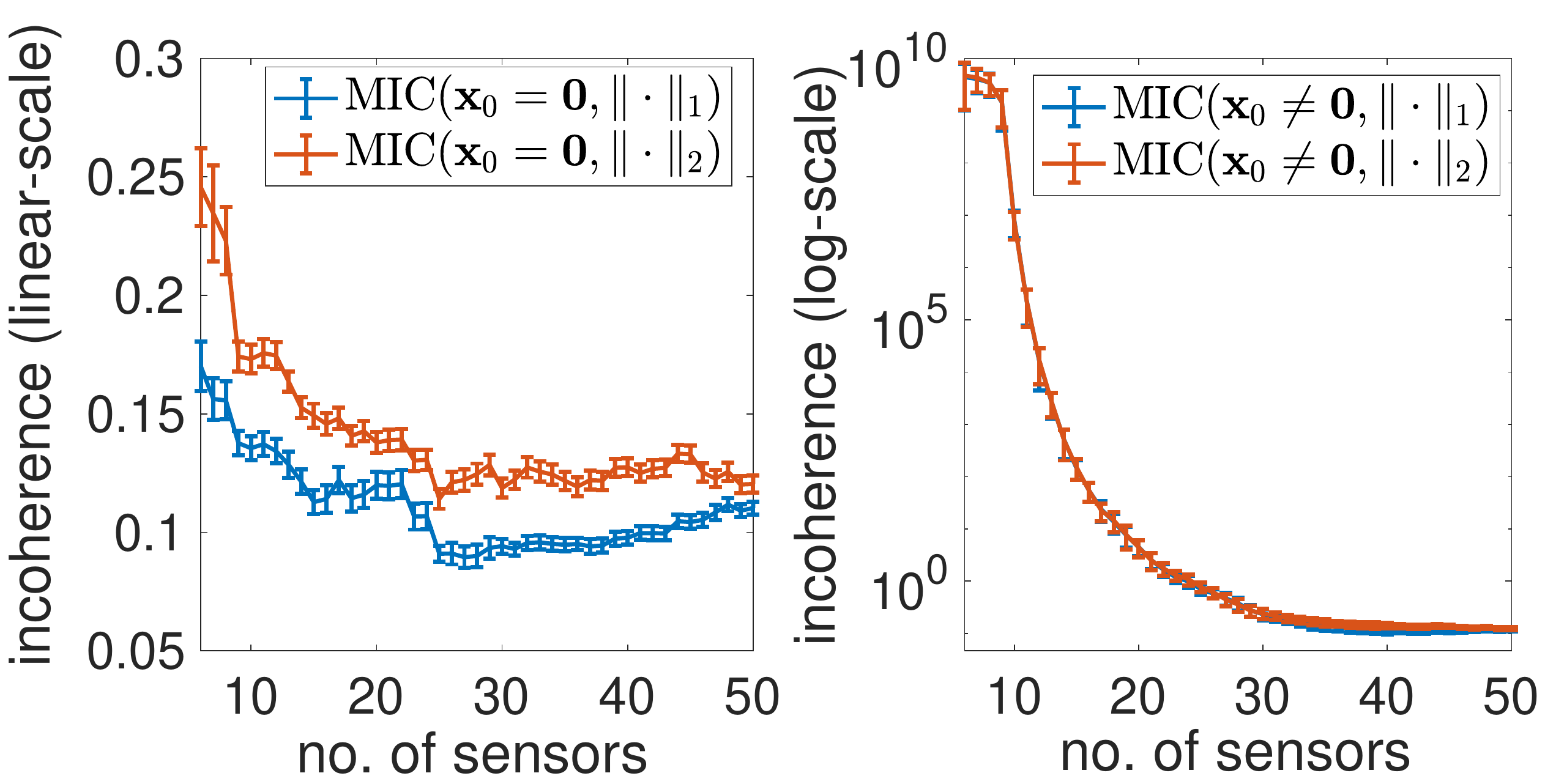}
	\caption{\scriptsize Mutual incoherence vs. number of sensors. Left: $\mbf{x}^*_0=\mbf{0}$. Right: $\mbf{x}^*_0\ne \mbf{0}$}
	\label{fig: MIC_rand_systems}
\end{figure}

\vspace{-2.0mm}
\section{Conclusions}
We study a group LASSO estimator for locating the sources of unknown forced inputs as well as estimating these inputs along with the initial state from noisy measurements. We derive sufficient conditions under which the group LASSO estimate is location- and estimation-recovery consistent. In doing so, we have extended the existing theory of the group LASSO estimator for static regression models to 
linear models generated by $d$-delay (left) invertible linear dynamical systems with unknown initial state. Our results establish a relationship between time- and frequency-domain mutual incoherence conditions. The latter condition provides insight into the structural aspects of transfer matrices associated with the zero and non-zero inputs. Finally, we have validated the performance of our proposed group LASSO estimator via simulations. 

Possible future work includes extending the group LASSO framework for linear and non-linear systems with additive state noise. As pointed out in Section \ref{sec: noisy dynamics}, in the presence of state noise, whitening the measurements can be detrimental to the performance of group LASSO. Two directions seem promising to tackle this issue: (i) to use the puffer-transformation method in \cite{JJ-KR:09} and (ii) to consider the group LASSO estimator for predictor or innovation form of the state-space model.

\ifCLASSOPTIONcaptionsoff
\newpageor
\fi

\bibliographystyle{unsrt}
\bibliography{BIB}


\section{Appendix}
\subsection{KKT conditions and PDW Construction}


\begin{proposition}{\bf \emph{(Karush-Kuhn-Tucker (KKT) conditions)}}\label{prop: KKT conditions}
	A necessary and sufficient condition for $(\what{{\bsx}}_0,\bshu)$, with $\bshu^\transpose=[\bshu_1^\transpose,\ldots,\bshu_m^\transpose]$, to be a solution of \eqref{eq: group LASSO opt} is 
		\begin{align}
			-\frac{1}{T}\mbf{O}^\transpose[\bsy - \bsO\bshx_0-\sum_{j=1}^m\bsJ_j\bshu_j]&=\bfzeros\label{eq: KKT 1a}\\ 
			-\frac{1}{T}\mbf{J}_i^\transpose[\bsy - \bsO\bshx_0-\sum_{j=1}^m\bsJ_j\bshu_j]+\lambda_T\bshz_j&=\bfzeros\label{eq: KKT 2a}
		\end{align}
	for all $j \in \{1,\ldots,m\}$. Here, $\bshz_j$ is the subgradient of $\norm{\bshu_j}_2$; that is, $\bshz_j={\bshu_j}/{\norm{\bshu_j}_2}$ if $\bshu_j\ne \bfzeros$, else $\bshz_j\in \{\mbf{q}: \norm{\mbf{q}}_2\leq 1\}$.
\end{proposition}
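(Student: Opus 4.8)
The plan is to treat \eqref{eq: group LASSO opt} as an unconstrained convex program and invoke the elementary zero-subgradient optimality condition. Write the objective as $F(\bsx_0,\bsu)=g(\bsx_0,\bsu)+\lambda_T h(\bsu)$, where $g(\bsx_0,\bsu)=\frac{1}{2T}\norm{\bsy-\bsO\bsx_0-\bsJ\bsu}_2^2$ is a (convex, though in general not strictly convex) quadratic that is differentiable everywhere, and $h(\bsu)=\sum_{j=1}^m\norm{\bsu_j}_2$ is convex, finite-valued, and independent of $\bsx_0$. Since $F$ is convex with full Euclidean domain, a pair $(\bshx_0,\bshu)$ minimizes $F$ globally if and only if $\bfzeros\in\partial F(\bshx_0,\bshu)$. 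Because $g$ is real-valued and differentiable, the Moreau--Rockafellar sum rule applies without any constraint qualification, giving $\partial F(\bsx_0,\bsu)=\nabla g(\bsx_0,\bsu)+\lambda_T\big(\{\bfzeros\}\times\partial h(\bsu)\big)$, the factor $\{\bfzeros\}$ recording that $h$ does not depend on $\bsx_0$.

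First I would compute the gradient of $g$ in the $\bsx_0$- and $\bsu_j$-directions: setting $\mbf{r}=\bsy-\bsO\bsx_0-\bsJ\bsu$, the chain rule gives $\nabla_{\bsx_0}g=-\frac{1}{T}\bsO^\transpose\mbf{r}$ and $\nabla_{\bsu_j}g=-\frac{1}{T}\bsJ_j^\transpose\mbf{r}$ for each $j$, using $\bsJ\bsu=\sum_{j=1}^m\bsJ_j\bsu_j$. The $\bsx_0$-component of $\partial F$ is then just $\nabla_{\bsx_0}g$, so stationarity in $\bsx_0$ is exactly \eqref{eq: KKT 1a}. For the $\bsu$-blocks I would use that $h$ is block-separable, so $\partial h(\bsu)=\partial\norm{\bsu_1}_2\times\cdots\times\partial\norm{\bsu_m}_2$, together with the standard subdifferential of the Euclidean norm: $\partial\norm{\bsu_j}_2=\{\bsu_j/\norm{\bsu_j}_2\}$ when $\bsu_j\ne\bfzeros$, and $\partial\norm{\bfzeros}_2=\{\mbf{q}:\norm{\mbf{q}}_2\le1\}$, the latter obtained from the Cauchy--Schwarz characterization of the dual ($\ell_2$) norm. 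Writing $\bshz_j$ for the selected element of $\partial\norm{\bshu_j}_2$, the block-$j$ stationarity conditions are precisely \eqref{eq: KKT 2a}.

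The two implications then drop out: if $(\bshx_0,\bshu)$ satisfies \eqref{eq: KKT 1a}--\eqref{eq: KKT 2a} for some admissible subgradients $\bshz_j$, then $\bfzeros\in\partial F(\bshx_0,\bshu)$, and the subgradient inequality $F(\bsx_0,\bsu)\ge F(\bshx_0,\bshu)$ for all $(\bsx_0,\bsu)$ shows $(\bshx_0,\bshu)$ is a global minimizer; conversely, any minimizer satisfies $\bfzeros\in\partial F$, which unpacks into \eqref{eq: KKT 1a}--\eqref{eq: KKT 2a}. There is no genuine obstacle in this argument; the only points deserving care are the (routine) verification that the subdifferential sum rule is applicable here and the explicit form of $\partial\norm{\cdot}_2$ at the origin, neither of which is affected by the possible rank deficiency of $[\bsO\ \ \bsJ]$ that matters elsewhere in the paper.
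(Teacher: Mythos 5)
Your proof is correct and follows the canonical route: the paper does not actually write out a proof of this proposition (it only cites a reference on the sparse-group lasso), and your argument via the zero-subgradient optimality condition, the Moreau--Rockafellar sum rule for a differentiable quadratic plus a finite-valued block-separable penalty, and the explicit subdifferential of $\norm{\cdot}_2$ at and away from the origin is exactly the standard derivation that reference supplies. Nothing is missing; in particular you correctly observe that the unpenalized $\bsx_0$-block contributes only the gradient term, which is why \eqref{eq: KKT 1a} carries no subgradient, and that rank deficiency of $[\bsO \ \ \bsJ]$ is irrelevant to the optimality characterization itself.
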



\newcommand{\bsteta}{\boldsymbol{\beta}}
\newcommand{\bshteta}{\widehat{\boldsymbol{\beta}}} 
\newcommand{\bshtetatil}{\widehat{\wtilde{\bsbeta}}} 
\newcommand{\bshutil}{\widehat{\wtilde{\mbf{u}}}} 
\newcommand{\bstetatil}{{\wtilde{\bsbeta}}} 
\newcommand{\Sc}{{S^c}}

The proof is given in \cite{NS:JF:TH:RT:13}. Without loss of generality let $S=\{1,\ldots,m^*\}$ and $S^c=\{m^*+1,\ldots,m\}$. Let $\what{\mbf{u}}_S^\transpose[k]=[\what{\mbf{u}}_{1}[k]\ldots,\what{\mbf{u}}_{m^*}[k]]$, for all $k \in \{0,\ldots,N\}$, where $\what{\mbf{u}}_{j}[k]$ is the $k$-th entry of $\what{\mbf{u}}_{j}$. Define $\what{\mbf{u}}_S^\transpose=[\what{\mbf{u}}_S^\transpose[0],\ldots,\what{\mbf{u}}_S^\transpose[N]]$.Thus, 
\begin{align}
    [\what{\mbf{u}}_1^\transpose,\ldots,\what{\mbf{u}}_{m^*}^\transpose]^\transpose=\mathbf{P}\what{\mbf{u}}_S,  
\end{align}
for some permutation matrix $\mbf{P}$. Further, we can verify that $[\mbf{J}_1\ldots, \mbf{J}_{m^*}]\mbf{P}=\mbf{J}_S$ (as in \eqref{eq: system matrices1}). Let $\what{\bsbeta}_S\triangleq [\what{\mbf{x}}_0^\transpose \,\, \what{\mbf{u}}_S^\transpose]^\transpose$ Then, 
 \begin{align}\label{eq: hatted identity J}
\mbf{O}\what{\mbf{x}}_0+\sum_{j \in S}\mbf{J}_j\what{\mbf{u}}_j
     &=\underbrace{\begin{bmatrix}
 	                \mbf{O} & \mbf{J}_S
 	            \end{bmatrix}}_{\bs{\Psi}_S}{\what{\bsbeta}_S}
\end{align}
Using these facts, expressions in \eqref{eq: KKT 1a}-\eqref{eq: KKT 2a} can be written as 
\begin{align}\label{eq: KKT partitioned}
\hspace{-3.0mm}	-\frac{1}{T}\begin{bmatrix}
		\bsPsi_S^\transpose\\
		\widetilde{\bfJ}_{S^c}^\transpose
	\end{bmatrix}[\bsy - \bsO\bshx_0-\sum_{j=1}^m\bsJ_j\bshu_j]\!+\!\lambda_T\begin{bmatrix}
		\bfzeros\\
				\mbf{P}^\transpose \bshz_S\\
		\hline 
\bshz_{S^c}
	\end{bmatrix}\!=\!\begin{bmatrix}
		\bfzeros\\
		\bfzeros\\
		\hline 
		\bfzeros
	\end{bmatrix},
\end{align}
where $\widetilde{\bfJ}_{S^c}=[\mbf{J}_{m*+1},\ldots,\mbf{J}_{m}]$, $\bshz_S^\transpose=[\bshz_{1}^\transpose,\ldots,\bshz_{m^*}^\transpose]$, and $\bshz_{S^c}^\transpose=[\bshz_{m*+1}^\transpose,\ldots,\bshz_{m}^\transpose]$.

\medskip 
\noindent{\bf Primal-dual witness (PDW) construction}: We prove Theorems \ref{thm: l2-consistency} and \ref{thm: location-consistency} using the PDW construction technique\footnote{The PDW construction is not an algorithm for solving the group LASSO problem in \eqref{eq: group LASSO opt}:  This is because to solve the sub-problem in step (b) of PDW, we need to know the active set $S$. However, PDW construction technique helps to prove consistency results for the LASSO type problems.} \cite{MJW:19}: 


\begin{enumerate}[label=(\alph*)]
	\item Set $\bshu_{j}=0$, for all $j \in S^c$. 
	\item Let $(\what{\bfx}_0,\what{\bsu}_{1}\ldots,\what{\bsu}_{m^*})$ be the solution of the sub-problem: 
	\begin{align}\label{eq: oracle sub-problem}
\hspace{-6.0mm}\min_{\substack{\mbf{x}_0;\\ \bfu_1,\ldots,\bfu_{m^*}}}\frac{1}{2T}{\norm{\mbf{y}\!-\!\mbf{O}\mbf{x}_0\!-\!\sum_{j=1}^{m^*}\mbf{J}_j\mbf{u}_j}_2^2}\!+\!\lambda_{T}\sum_{j=1}^{m^*}\|\mbf{u}_j\|_2.
	\end{align}
	Choose the sub-gradient $\bshz_S=[\bshz_1^\transpose,\ldots,\bshz_{m^*}^\transpose]^\transpose$
 such that 
	\begin{equation}
		-\frac{1}{T}\bsPsi_S^\transpose\left[\bsy- \bsO\bshx_0-\textstyle\sum_{j=1}^{m^*}\bsJ_j\bshu_j\right]+\lambda_T\begin{bmatrix}
			\bfzeros\\
			\mbf{P}^\transpose \bshz_S
		\end{bmatrix}=\bfzeros. 
	\end{equation}

	\item Solve $\bshz_\Sc=[\bshz_{m^*+1}^\transpose,\ldots,\bshz_{m}^\transpose]^\transpose$ using \eqref{eq: KKT partitioned}, and check if $\|\bshz_j\|_2\leq 1$, for all $j \in S^c=\{m^*+1,\ldots,m\}$. 
\end{enumerate}

By construction, $(\what{\mbf{x}}_0,\what{\mbf{u}}_1\ldots,\what{\mbf{u}}_{m^*})$, $\bshz_S$, and $\bshz_\Sc$ that we determined in steps (a), and (b) satisfy conditions in \eqref{eq: KKT partitioned}. The PDW construction is said to be successful if $\bshz_{S^c}$ satisfies the strict dual feasibility condition: $\|\bshz_j\|_2\leq 1$, for all $j \in S^c$. 

\vspace{-3.0mm}
\subsection{Proofs of Theorems in Section \ref{sec: main results}}

For the estimate in \eqref{eq: oracle sub-problem}, define
\begin{align}\label{eq: PDW estimate}
    \what{\bsbeta}_\text{PDW}=(\what{\mbf{x}}_{0},\what{\bsu}_1,\ldots,\what{\bsu}_{m^*},\underbrace{\mbf{0}_{(N+1)},\ldots,\mbf{0}_{(N+1)}}_{m-m^*}). 
\end{align}

\begin{lemma}\label{lma: PDW success}
	 Suppose that the PDW construction succeeds. If delay $d>0$, $\what{\bsbeta}=\what{\bsbeta}_\mathrm{PDW}$ is an optimal solution of \eqref{eq: group LASSO opt}. If $d=0$, $\what{\bsbeta}=\what{\bsbeta}_\mathrm{PDW}$ is the "unique" optimal solution. 
\end{lemma}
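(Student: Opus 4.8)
\textbf{Proof plan for Lemma~\ref{lma: PDW success}.}
The plan is to leverage the standard argument from Wainwright's primal-dual witness machinery, adapted to the fact that we do not penalize $\bsx_0$ and that $\bsPsi_S$ may be rank deficient for $d>0$. First I would establish optimality: by Proposition~\ref{prop: KKT conditions}, a point $(\what{\bsx}_0,\bshu)$ is optimal for \eqref{eq: group LASSO opt} if and only if it satisfies the KKT system \eqref{eq: KKT 1a}--\eqref{eq: KKT 2a}, equivalently the partitioned form \eqref{eq: KKT partitioned}. By construction, steps~(a) and~(b) of the PDW procedure produce $\what{\bsbeta}_\text{PDW}$ together with a subgradient $\bshz_S$ satisfying the first two block rows of \eqref{eq: KKT partitioned}, and step~(c) produces $\bshz_{S^c}$ satisfying the third block row. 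The hypothesis that the construction \emph{succeeds} means precisely that $\|\bshz_j\|_2 \le 1$ for all $j \in S^c$, so $\bshz_{S^c}$ is a valid subgradient of $\sum_{j\in S^c}\|\cdot\|_2$ at the zero vector. Hence the full KKT system holds at $\what{\bsbeta}_\text{PDW}$, and it is an optimal solution; this covers the $d>0$ claim.

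Next I would address uniqueness when $d=0$. The objective in \eqref{eq: group LASSO opt} is convex but not strictly convex in general, so I cannot invoke strict convexity directly. Instead I would argue as follows. When $d=0$, Section~III and Proposition~\ref{prop: sub-matrix full rank} (with $d=\eta_S=0$) give that $\bsPsi_S=\bsPsi_{S,[N:0]}$ has full column rank, so $\bsPsi_S^\transpose\bsPsi_S$ is invertible. Suppose $\what{\bsbeta}'$ is any other optimal solution. By convexity, every point on the segment between $\what{\bsbeta}_\text{PDW}$ and $\what{\bsbeta}'$ is optimal, and since the loss $\frac{1}{2T}\|\mbf{y}-\mbf{O}\bsx_0-\mbf{J}\bsu\|_2^2$ is convex and the regularizer is convex, the standard argument shows the residual $\mbf{y}-\mbf{O}\bsx_0-\mbf{J}\bsu$ must be constant along the segment, hence $\mbf{O}\what{\bsx}_0'+\mbf{J}\what{\bsu}' = \mbf{O}\what{\bsx}_0+\mbf{J}\what{\bsu}$. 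I would then use the strict dual feasibility $\|\bshz_j\|_2<1$ for $j\in S^c$ (which the PDW success actually delivers with strict inequality under the chosen $\lambda_T$, or can be arranged) to conclude via complementary slackness that any optimal $\what{\bsu}'$ also has $\what{\bsu}_j'=\bfzeros$ for $j\in S^c$: if some $\what{\bsu}_j'\neq\bfzeros$, the KKT stationarity condition would force $\|\bshz_j\|_2=1$, a contradiction. Thus every optimal solution is supported on $S$, and on $S$ the equality $\bsPsi_S\what{\bsbeta}_S'=\bsPsi_S\what{\bsbeta}_S$ together with the full column rank of $\bsPsi_S$ forces $\what{\bsbeta}_S'=\what{\bsbeta}_S$, establishing uniqueness.

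The main obstacle I anticipate is the uniqueness half, specifically justifying that strict dual feasibility propagates to \emph{every} optimal solution rather than just the witnessed one. The clean way to handle this is the observation that for a LASSO-type problem the set of subgradients (dual certificates) and the fitted values $\mbf{O}\bsx_0+\mbf{J}\bsu$ are invariant across optimal solutions; so the strict inequality $\|\bshz_j\|_2<1$ obtained from the witness construction applies as the unique dual certificate, forcing the support of every primal optimum into $S$. I would cite the relevant lemma from \cite{MJW:19} (or reprove this support-invariance in one line using convexity of the optimal face) and then finish with the rank argument. The $d>0$ non-uniqueness is not something to prove here—it is simply that the lemma makes no uniqueness claim in that case—so no additional work is needed beyond the optimality verification above.
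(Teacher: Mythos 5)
Your proposal is correct and follows the same overall skeleton as the paper's proof: verify the KKT system at $\what{\bsbeta}_\mathrm{PDW}$ to obtain optimality, show that every optimum has its input blocks supported on $S$, and then use full column rank of $\bsPsi_S$ when $d=0$ (via Proposition~\ref{prop: sub-matrix full rank}) to force uniqueness. The one step where you genuinely diverge is the support-containment argument. The paper reproves it directly in the style of Lemma~7.23 of \cite{MJW:19}: for any other optimum $\wtilde{\mbf{u}}$ it combines equality of optimal objective values, the zero-subgradient identity $\lambda_T\bshz=-\nabla F(\bshu)$, and convexity of the quadratic loss to get $\sum_{j}\|\wtilde{\bsu}_j\|_2\le\bshz^\transpose\wtilde{\bfu}$, then squeezes this against Cauchy--Schwarz to conclude $\wtilde{\bsu}_j=\bfzeros$ whenever $\|\bshz_j\|_2<1$. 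You instead invoke invariance of the fitted values (hence of the residual, hence of the dual certificate, since by \eqref{eq: KKT 2a} the block-$j$ subgradient is determined by the residual) across the optimal face, and read off the contradiction from block-$j$ stationarity. Both are standard and both work; your route makes the mechanism (a single shared certificate with $\|\bshz_j\|_2<1$ on $S^c$) more transparent, at the cost of needing the fitted-value-invariance lemma, which the paper's inequality chain sidesteps. Two details you handled correctly and should keep explicit: the paper's stated ``success'' condition is written as $\|\bshz_j\|_2\le 1$ but the argument requires strict inequality, which the proof of Theorem~\ref{thm: location-consistency} actually supplies ($\le 0.5(1+\alpha)<1$); and the paper's final uniqueness step is phrased as strict convexity of the restricted sub-problem \eqref{eq: oracle sub-problem} under (A2), which for $d=0$ is exactly the full-column-rank fact you use.
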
 
\begin{proof}
We follow the proof technique in \cite[Lemma 7.23]{MJW:19}. Let $d\geq 0$. Because the PDW construction succeeds, $\what{\bsbeta}_\mathrm{PDW}$ is an optimal solution of \eqref{eq: group LASSO opt} with subgradient vector $\bshz^\transpose=[\bshz_0^\transpose, \bshz_1^\transpose,\ldots,\bshz_{m}^\transpose]$ satisfying $\bshz_0=\bfzeros$, $\|\bshz_j\|_2=1$ for $j \in S$, and $\|\bshz_j\|_2<1$ for $j \in S^c$. We now show that any optimal solution of \eqref{eq: group LASSO opt} is supported on the set $S$. 

With a slight abuse of notation, let $\mbf{u}^\transpose=[\mbf{x}_0^\transpose, \mbf{u}^\transpose_1,\ldots,\mbf{u}^\transpose_m]$ and denote $F(\mbf{u})=\frac{1}{2T}\|\mbf{y}-\mbf{O}\mbf{x}_0+\sum_{j=1}^m\mbf{J}_j\mbf{u}_j\|_2^2$. Let $\nabla F(\mbf{u})$ be the gradient of $F(\mbf{u})$ with respect to $\mbf{u}$. Then, for any other optimal solution $\wtilde{\mbf{u}}$ of \eqref{eq: group LASSO opt}, we have $F(\what{\mbf{u}})+\lambda_T\bshz^\transpose\bshu=F(\wtilde{\mbf{u}})+\lambda_T\sum_{j = 1}^m\|\wtilde{\bsu}_j\|_2$
The last equality follows because $\sum_{j = 1}^m\bshz_j^\transpose\bshu_j=\sum_{j=1}^m\|\bshu_j\|_2$. 
Hence, $F(\what{\mbf{u}})-\lambda_T\bshz^\transpose(\wtilde{\bfu}-\bshu)=F(\wtilde{\mbf{u}})+\lambda_T\sum_{j = 1}^m\|\wtilde{\bsu}_j\|_2-\lambda_T\bshz^\transpose\wtilde{\bfu}$. Instead, from the zero-subgradient conditions in \eqref{eq: KKT 1a}-\eqref{eq: KKT 2a}, we have $\lambda_T\bshz=-\nabla F(\bshu)$. Putting the pieces together, we have
\begin{align*}
    F(\what{\mbf{u}})+\nabla F(\bshu)^\transpose(\wtilde{\bfu}-\bshu)-F(\wtilde{\mbf{u}})=\lambda_T(\sum_{j = 1}^m\|\wtilde{\bsu}_j\|_2-\bshz^\transpose\wtilde{\bfu}).
\end{align*}
By convexity of $F$, the left-hand side is negative. As a result, $\sum_{j = 1}^m\|\wtilde{\bsu}_j\|_2\leq \bshz^\transpose\wtilde{\bfu}=\sum_{j=1}^m\bshz^\transpose_j\wtilde{\mbf{u}}_j$, where $\bshz_0=\bfzeros$. Since we also have $\sum_{j=1}^m\bshz^\transpose_j\wtilde{\mbf{u}}_j\leq \sum_{j=1}^m\|\bshz_j\|_2\|\wtilde{\mbf{u}}_j\|_2\leq\sum_{j=1}^m\|\wtilde{\mbf{u}}_j\|_2$, we must have $\sum_{j = 1}^m\|\wtilde{\bsu}_j\|_2\!=\!\sum_{j=1}^m\bshz^\transpose_j\wtilde{\mbf{u}}_j$. Because $\|\bshz_j\|_2<1$ for $j \in S^c$, the above equality can only occur if $\wtilde{\bsu}_j=\mbf{0}$, for all $j \in S^c$. To see this notice that $\sum_{j=1}^m\bshz^\transpose_j\wtilde{\mbf{u}}_j\!=\!\sum_{j \in S}\bshz^\transpose_j\wtilde{\mbf{u}}_j+\sum_{j \in  S^c}\|\bshz_j\|_2\|\wtilde{\bsu}_j\|_2\cos(\theta_j)$, 
where $\theta_j$ is the angle between $\bshz_j$ and $\widetilde{\bfu}_j$, and $\|\bshz_j\|_2\cos(\theta_j)\in (-1,1)$. Thus, all optimal solutions $\what{\bsbeta}$ are such that $\what{\bsbeta}_j=\mbf{0}$ for all $j \in S^c$. These solutions can be obtained by solving \eqref{eq: oracle sub-problem}. Finally, for $d=0$, the assumption in (A2) ensures that \eqref{eq: oracle sub-problem} is strictly convex, and hence, $\what{\bsbeta}$
is a unique minimizer. \end{proof}

\noindent{\textit{Proof of Theorem \ref{thm: location-consistency}}}: Suppose the PDW construction succeeds. The proof of part (a) is given in Lemma \ref{lma: PDW success}. Further, in view of Lemma \ref{lma: PDW success}, $\what{\bsbeta}=\what{\bsbeta}_\mathrm{PDW}$ is an optimal solution of \eqref{eq: group LASSO opt}. Thus, all the optimal input vectors are supported on the set $S$, i.e., $\what{S}\subset S$, where $\what{S}=\{j:\what{\mbf{u}}_j\ne 0\}$; thus, part (b) holds. 

We show that the PDW construction succeeds with probability at least $1-2\exp(-T\delta^2/2)$ by showing that $\|\bshz_j\|_2\leq 1$, for all $j \in S^c$. Here, $\bshz_{j}$ is determined in the step (c) of PDW construction. Let $\what{\bsbeta}_S$ be as in \eqref{eq: hatted identity J}. By substituting $\mbf{y}$ (given in \eqref{eq: measurement model1*}) and $\what{\mbf{u}}_\Sc=\mbf{0}$ in \eqref{eq: KKT partitioned}, we obtain 
\begin{align}\label{eq: block sub-gradient1}
	\frac{1}{T}\begin{bmatrix}
		\bsPsi_S^\transpose\bsPsi_S & \bsPsi_S^\transpose\bsJtil_{S^c}\\
		\bsJtil_{S^c}^\transpose\bsPsi_S & \bsJtil_{S^c}^\transpose\bsJ_{S^c}
	\end{bmatrix}
	\begin{bmatrix}
		\bsbeta^*_S-\what{\bsbeta}_S\\
		\bfzeros 
	\end{bmatrix}\!+\!\frac{1}{T}
	\begin{bmatrix}
		\bsPsi_S^\transpose\\
		\bsJtil_{S^c}^\transpose
	\end{bmatrix}\mbf{v}\!=\!\lambda_T\begin{bmatrix}
		\bfzeros\\
		\mbf{P}^\transpose\bshz_{S}\\
		\hline 
		\bshz_\Sc
	\end{bmatrix}. 
\end{align}
Using the second block equation of \eqref{eq: block sub-gradient1}, solve for $\bshz_\Sc$ as 
\begin{align}\label{eq: zsc1}
	\bshz_\Sc&=\bsJtil_{S^c}^\transpose\bsPsi_S\left[\frac{\bsPsi_S^\dagger\bsPsi_S}{\lambda_T T}(\bsteta^*_S-\bshteta_S)\right]+\bsJtil_{S^c}^\transpose\left(\frac{\mbf{v}}{\lambda_TT}\right), 
\end{align}
where we used the fact $\bsPsi_S=\bsPsi_S\bsPsi_S^\dagger \bsPsi_S$. On the other hand, from the top block equation in \eqref{eq: block sub-gradient1}, we have 
\begin{align}\label{eq: difference theta1}
	\frac{1}{T}\bsPsi_S^\transpose\bsPsi_S(\bsteta^*_S-\bshteta_S)+\frac{1}{T}\bsPsi_S^\transpose\mbf{v}=\lambda_T\begin{bmatrix}
		\bfzeros\\
		\mbf{P}^\transpose\bshz_S
	\end{bmatrix}. 
\end{align}
Pre-multiply both sides of the equality in \eqref{eq: difference theta1} with $(\bsPsi_S^\transpose\bsPsi_S)^\dagger$. Then, use  the identities  $(\bsPsi_S^\transpose\bsPsi_S)^\dagger(\bsPsi_S^\transpose\bsPsi_S)^\dagger=\bsPsi_S^\dagger\bsPsi_S$ and 
$\bsPsi_S^\dagger=(\bsPsi_S^\transpose\bsPsi_S)^\dagger\bsPsi_S^\transpose$ (see \cite{AB-BS:03}) to get the following: 
\begin{align}\label{eq: difference theta2}
\hspace{-2.5mm}	{\bsPsi_S^\dagger\bsPsi_S}(\bsteta^*_S-\bshteta_S)=-\bsPsi_S^\dagger\mbf{v}+T\lambda_T(\bsPsi_S^\transpose\bsPsi_S)^\dagger\begin{bmatrix}
		\bfzeros\\
		\mbf{P}^\transpose\bshz_S
	\end{bmatrix}. 
\end{align}
Let $\boldsymbol{\Gamma}_S=[\mbf{I}-(\bsPsi_S\bsPsi_S^\dagger)]$. By substituting \eqref{eq: difference theta2} in the first term of the second equality in \eqref{eq: zsc1}, we can simplify $\what{\mbf{z}}_{S^c}$ as
\begin{align}\label{eq: zsc12}
	\bshz_\Sc&=\bsJtil_{S^c}^\transpose(\bsPsi_S^\dagger)^\transpose\begin{bmatrix}
		\bfzeros\\
		\mbf{P}^\transpose\bshz_S
	\end{bmatrix}+\bsJtil_{S^c}^\transpose\boldsymbol{\Gamma}_S\left(\frac{\mbf{v}}{\lambda_TT}\right), 
\end{align}
where we used the fact $(\bsPsi_S^\dagger)^\transpose=\bsPsi_S(\bsPsi_S^\transpose\bsPsi_S)^\dagger$. Thus,
\begin{align}\label{eq: zsc2}
	\bshz_j&=\bsJ_{j}^\transpose(\bsPsi_S^\dagger)^\transpose\begin{bmatrix}
		\bfzeros\\
		\mbf{P}^\transpose\bshz_S
	\end{bmatrix}+\bsJ_{j}^\transpose\boldsymbol{\Gamma}_S\left(\frac{\mbf{v}}{\lambda_TT}\right), \quad \forall j \in S^c
\end{align}
By the sub-multiplicative property of norms, for any $j \in S^c$,
\begin{align*}
    \norm{\bsJ_{j}^\transpose(\bsPsi_S^+)^\transpose\begin{bmatrix}
		\bfzeros\\
		\mbf{P}^\transpose\bshz_S
	\end{bmatrix}}_2&\leq \max_{j \in S^c}\|\bsJ_{j}^\transpose(\bsPsi_S^+)^\transpose\|_2\norm{\begin{bmatrix}
				\bfzeros\\
				\mbf{P}^\transpose\bshz_S
		\end{bmatrix}}_2\nonumber \\
		& \leq \frac{\alpha}{m^*} \|\mbf{P}^\transpose\bshz_S\|_2\leq \frac{\alpha}{m^*} \sum_{j \in S}\|\what{\mbf{z}}_j\|_2\leq \alpha. 
\end{align*}
where $\alpha\leq 1$ is given in \eqref{eq: mutual incoherence} and we used the fact that $\|\what{\mbf{z}}_j\|_2\leq 1$ (see Proposition \ref{prop: KKT conditions}), for $j \in S$, and $\|\mbf{P}^\transpose\|_2\leq 1$. As a result, from \eqref{eq: zsc2} and the preceding inequality, we have
\begin{align}
	\max_{j \in S^c}\|\bshz_j\|_2&\leq \alpha  + \max_{j \in S^c}\norm{\bsJ_{j}^\transpose\boldsymbol{\Gamma}_S\left(\frac{\mbf{v}}{\lambda_TT}\right)}_2=\alpha+\kappa_2. 
\end{align}
On the other hand, in light of Lemma \ref{lma: proof Theorem 4 kappa bound}, $\kappa_2 \leq 0.5(1-\alpha)$ with probability at least $1-2\exp(-\delta^2T/2)$, for $\delta >0$. 
Putting together the pieces, we have $\max_{j \in S^c}\|\bshz_j\|_2\leq 0.5(1+\alpha)<1$, thereby establishing the strict dual feasibility condition. 

{\em Part (c)}: From Assumption (A2) and Proposition \ref{prop: sub-matrix full rank}, we have
\begin{align}\label{eq: dagger identity}
    \bsPsi^\dagger_S\bsPsi_S=\mathrm{Blkdiag}(\mbf{I}_{n},\mbf{I}_{t_S},\mbf{\bsPsi}_{S,[d-1:0]}^\dagger\mbf{\bsPsi}_{S,[d-1:0]}),
\end{align}
where $t_S=(N-d+1)m^*$. Thus, we have 
\begin{align}\label{eq: delayed extraction}
\hspace{-2.0mm}{\mbf{u}}_{S,[0:N-d]}^*\!-\!\what{\mbf{u}}_{S,[0:N-d]}&=\boldsymbol{\Pi}_{S,[0:N-d]}{\bsPsi_S^\dagger\bsPsi_S}(\bsteta^*_S-\bshteta_S),
\end{align}
where $\boldsymbol{\Pi}_{S,[0:N-d]}=[\mbf{0}_{t_S\times n} \,\mbf{I}_{t_S\times t_S}\,\mbf{0}_{t_S\times dm^*}]$ and 
\begin{align}
\bsteta^*_S\!-\!\bshteta_S\!=\!\begin{bmatrix}
    \mbf{x}^*_0-\what{\mbf{x}}_0\\
    {\mbf{u}}_{S,[0:N-d]}^*-\what{\mbf{u}}_{S,[0:N-d]}\\
    {\mbf{u}}_{S,[N-d+1:0]}^*-\what{\mbf{u}}_{S,[N-d+1:0]}
\end{bmatrix}. 
\end{align}
For brevity, let $\boldsymbol{\Pi}=\boldsymbol{\Pi}_{S,[0:N-d]}$. 
From \eqref{eq: delayed extraction} and \eqref{eq: difference theta2}, and the sub-multiplicative property of norms, we have  
\begin{align}\label{eq: min_signal bound proof}
\|{\mbf{u}}_{S,[0:N-d]}^*\!-\!\what{\mbf{u}}_{S,[0:N-d]}\|_{\infty}
	&\leq \|\boldsymbol{\Pi}\bsPsi_S^\dagger \mbf{v}\|_\infty \nonumber \\
	&+\lambda_T\norm{\boldsymbol{\Pi}(\bsPsi_S^\transpose\bsPsi_S/T)^\dagger}_\infty, 
\end{align}
 where we used the fact $\|\widehat{\mbf{z}}_S\|_\infty\leq 1$. The second term is deterministic. Instead, the first term is random, and, from Lemma 3, it is upper bounded by $ \sigma/\sqrt{c_\mathrm{min}}(\sqrt{2\log(t_S)/T}+\delta)$ with probability at least $1-2\exp(-T\delta^2/2)$. Finally, the left-hand side of \eqref{eq: min_signal bound proof} can be written as $\max_{j \in S}\|{\mbf{u}}_{j,[0:N-d]}^*\!-\!\what{\mbf{u}}_{j,[0:N-d]}\|_\infty$. Putting the pieces together, we have the inequality in \eqref{eq: beta min}.

\smallskip 
{\em Part (d)}: By the triangle inequality, for all $j \in S$, we have 
\begin{align*}
    \|\mbf{u}_{j:[0:N-d]}^*\|_\infty&=\|\mbf{u}_{j:[0:N-d]}^*-\what{\mbf{u}}_{j:[0:N-d]}+\what{\mbf{u}}_{j:[0:N-d]}\|_\infty\\
    &\leq\|\mbf{u}_{j:[0:N-d]}^*-\what{\mbf{u}}_{j:[0:N-d]}\|_\infty+\|\what{\mbf{u}}_{j:[0:N-d]}\|_\infty\\
    &\overset{(i)}{\leq} \bsbeta_\mathrm{min}+\|\what{\mbf{u}}_{j:[0:N-d]}\|_\infty,
\end{align*}
where (i) follows from part (c). Thus, $\|\what{\mbf{u}}_{j:[0:N-d]}\|_\infty> 0$ if $\|\mbf{u}_{j:[0:N-d]}^*\|_\infty>\bsbeta_\mathrm{min}$. This observation together with $\what{S}\subseteq S$ in part (a) implies that $\what{S}=S$. 

Finally, the probability stated in the theorem is obtained by taking the union bound of the event where the dual feasibility holds and the event where $\ell_\infty$ bounds hold. \QEDB

\medskip 
\textit{Proof of Theorem \eqref{thm: frequency MIC}}: Consider the auxiliary system $\mbf{x}[k+1]=\mbf{A}\mbf{x}[k]+\mbf{b}_j{u}^*_j[k]$, where $j \in S^c$ and ${u}^*_j[k]=0$, $k\geq N$. Let $\mbf{x}[0]=0$. Thus, $\mbf{y}=\mbf{J}_j\mbf{u}^*_j$, where $\mbf{J}_j$ is given by \eqref{eq: system matrices} and $\mbf{y}$ and $\mbf{u}^*_j$ as in \eqref{eq: time collected vectors}. Let $\bsPsi_S$ be as in \eqref{eq: measurement model1*}, and consider
\begin{align}\label{eq: deadbeat filter0}
   \hspace{-3.0mm} \begin{bmatrix}
        \wtilde{\mbf{y}}[0]\\
        \vdots \\
        \wtilde{\mbf{y}}[N]
    \end{bmatrix}\triangleq \bsPsi_S^\dagger\begin{bmatrix}
        \mbf{y}[0]\\
        \vdots \\
        \mbf{y}[N]
    \end{bmatrix}=\bsPsi_S^\dagger\mbf{J}_j\mbf{u}^*_j=\bsPsi_S^\dagger\mbf{J}_j\begin{bmatrix}
        {u}^*_j[0]\\
        \vdots \\
        {u}^*_j[N]
    \end{bmatrix}, 
\end{align}

By assumption we have $\mathrm{nRank} \mathcal{Z}_S=n+m^*$. Thus, for all
$z\not \in \text{spec}(A)$, $\mathcal{G}_S[z]$ has full column rank and $\mathcal{G}_S^{+}[z]=[\mathcal{G}_S[z]^\transpose \mathcal{G}_S[z]]^{-1}\mathcal{G}_S[z]^\transpose$ and $\mathcal{G}_S^{+}[z]\mathcal{G}_S[z]=z^{-d}\mbf{I}$; see \cite[Theorem 1]{SK-HP-EZ-DSB:11}. Let $\wtilde{\mbf{y}}[z]$ be the $\mathcal{Z}$-transform of $\{\wtilde{\mbf{y}}[k]\}_{k=0}^\infty$.
 Then by using the construction given in \cite[pp. 49-50]{AnsariPhd2018} and the uniqueness of pseudo inverse \cite{AA-DSB:19}, we have $\wtilde{\mbf{y}}[z]=z^{-d}\mathcal{H}[z]{u}_j^*[z]$, where $\mathcal{H}_j[z]=\mathcal{G}_S^{+}[z]\mathcal{G}_j[z]$, for all $z \not\in \text{spec}(A)$.
 
 From Parsevel's theorem, we have the following bound: 
\begin{align}\label{eq: parsvel bound}
    \sqrt{\sum_{k=0}^{\infty}\|\wtilde{\mbf{y}}[k]\|^2_2}&=\sqrt{\frac{1}{2\pi}\int_{-\pi}^\pi \|\wtilde{\mbf{y}}[e^{j\omega}]\|_2^2d\omega}\nonumber\\
    &=\sqrt{\frac{1}{2\pi}\int_{-\pi}^\pi \|e^{-dj\omega}\mathcal{H}_j[e^{j\omega}]{u}_j^*[e^{j\omega}]\|_2^2d\omega}\nonumber\\
    &\leq \sup_{\{\omega\in [-\pi, \pi ]\}}\|\mathcal{H}_j[e^{j\omega}]\|_2 \sqrt{\frac{1}{2\pi}\int_{-\pi}^\pi|u^*_j[e^{j\omega}]|^2_2d\omega}\nonumber\\
    &= \sup_{\{z\in \mathbb{C}:|z|=1\}}\|\mathcal{H}_j[z]\|_2\|\mbf{u}_j^*\|_2.
\end{align}
For the final inequality, we once again used Parsevel's theorem and the fact that $u^*[k]=0$, for all $k > N$. 

On the other hand, from \eqref{eq: deadbeat filter0} and \eqref{eq: parsvel bound}, we have 
\begin{align}
\|\bsPsi_S^\dagger\mbf{J}_j\|_2&=\sup_{\|\mbf{u}^*_j\|_2=1} \|\bsPsi_S^\dagger\mbf{J}_j\mbf{u}^*_j\|_2=\sup_{\|\mbf{u}^*_j\|_2=1}\sqrt{\sum_{k=0}^{N}\|\wtilde{\mbf{y}}[k]\|^2_2}\nonumber \\
&\leq \sup_{\|\mbf{u}^*_j\|_2=1}\sqrt{\sum_{k=0}^{\infty}\|\wtilde{\mbf{y}}[k]\|^2_2}\leq \sup_{\{z\in \mathbb{C}:|z|=1\}}\|\mathcal{H}_j[z]\|_2.\nonumber 
\end{align}
Thus $\max_{\{j \in S^c\}}\sup_{\{z\in \mathbb{C}:|z|=1\}}\|\mathcal{H}[z]\|_2\leq \alpha/m^*$ implies that  $\|\bsPsi_S^\dagger\mbf{J}_j\|_2\leq \alpha/m^*$. The proof is now complete. \QEDB  

\medskip 
\textit{Proof of Theorem \eqref{thm: l2-consistency}}:    From Theorem \ref{thm: location-consistency}, $S=\what{S}$ holds with probability at least $1-4\exp(-T\delta^2/2)$. Thus, from \eqref{eq: OLS estimate}, with the same probability, we have
\begin{align}\label{eq: least and lasso set equal}
    \what{\bsbeta}^{(OLS)}_{\what{S},[0:N-d]}=\what{\bsbeta}^{(OLS)}_{{S},[0:N-d]}=   \widetilde{\boldsymbol{\Pi}}_{{S},[0:N-d]}\bsPsi^\dagger \mbf{y}, 
\end{align}
   where $\widetilde{\boldsymbol{\Pi}}_S\triangleq \widetilde{\boldsymbol{\Pi}}_{{S},[0:N-d]}=[\mbf{I}_{n+t_S} \, \mbf{0}_{(n+t_S)\times dm^*}]$.
   
   Since $\mbf{y}\sim \mathcal{N}(\bsbeta^*_{S,[0:N-d]},\sigma^2\mbf{I})$, it follows that 
   \begin{align}
       \what{\bsbeta}^{(OLS)}_{S,[0:N-d]}-\bsbeta^*_{S,[0:N-d]}\sim \mathcal{N}(\mbf{0},\underbrace{\sigma^2\widetilde{\boldsymbol{\Pi}}_S(\bsPsi_{{S}}^\transpose\bsPsi_{{S}})^\dagger \widetilde{\boldsymbol{\Pi}}^\transpose_S}_{\triangleq \boldsymbol{\Sigma}\,\in\, \mathbb{R}^{n+t_S\times n+t_S}}). 
   \end{align}
We now upper bound $\|\boldsymbol{\Sigma}\|_2$. Recall from \eqref{eq: Psi} and Proposition \ref{prop: sub-matrix full rank} (ii) that $\bsPsi_{{S}}=[\bsPsi_{{S},[N:d]}\,\bsPsi_{{S},[d-1:0]}]$ and $\mathcal{R}(\bsPsi_{{S},[N:d]})\cap \mathcal{R}(\bsPsi_{{S},[d-1:0]})=\{0\}$. Let $\mbf{M}=[\mbf{I}-\bsPsi_{{S},[d-1:0]}\bsPsi_{{S},[d-1:0]}^\dagger]$. Then, by invoking \cite[Lemma D]{AA-DSB:19}, we have
\begin{align}
    \widetilde{\boldsymbol{\Pi}}_S(\bsPsi_{{S}}^\transpose\bsPsi_{{S}})^\dagger \widetilde{\boldsymbol{\Pi}}^\transpose_S=[(\mbf{M}\bsPsi_{{S},[N:d]})^\dagger(\mbf{M}\bsPsi_{{S},[N:d]})]^\transpose. 
\end{align}
Since $\mbf{M}^*=\mbf{M}=\mbf{M}^2$, from Assumption (A2), it follows that $\norm{\boldsymbol{\Sigma}}_2\leq \sigma^2/(Tc_\text{min})$. 

From the second concentration result in Lemma \ref{lma: sub-gaussian norm}, we have 
  \begin{align}\label{eq: norm bound least squares}
\norm{\bsbeta^*_{S,[0:N-d]}-\what{\bsbeta}^{(OLS)}_{{S},[0:N-d]}}_2&\!\leq\!\frac{4\sigma}{\sqrt{c_{min}}}\left\lbrace\sqrt{\frac{(n+t_S)}{T}}\right\rbrace\nonumber \\
			& \hspace{-8.0mm} +\frac{2\sigma}{\sqrt{c_{min}}}\left\lbrace\sqrt{\frac{1}{T}\log\left(\frac{1}{{\delta_1}}\right)}\right\rbrace,
  \end{align}
  with probability at least $1-\delta_1$ for $\delta_1 (0,1)$. The statement of the theorem follows by taking an union bound over the events where \eqref{eq: norm bound least squares} and \eqref{eq: least and lasso set equal} hold. \QEDB
  
  \begin{lemma}\label{lma: proof Theorem 4 kappa bound}  With the notation and assumptions stated in Theorem \ref{thm: location-consistency}, we have $\mathbb{P}[\max_{j \in S^c}\norm{\bsJ_{j}^\transpose\boldsymbol{\Gamma}_S\left({\mbf{v}}/{\lambda_TT}\right)}_2\!\geq\! 0.5(1-\alpha)]\leq 2\exp(-T\delta^2/2)$, where $\boldsymbol{\Gamma}_S\!=\![\mbf{I}\!-\!\bsPsi_S\bsPsi_S^\dagger]$ and $\alpha \in [0,1)$.
  \end{lemma}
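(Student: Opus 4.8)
The plan is to treat $\norm{\bsJ_j^\transpose\boldsymbol{\Gamma}_S\mbf{v}}_2$, for each fixed $j\in S^c$, as a supremum of centered Gaussians over the Euclidean unit sphere of $\real^{N+1}$, to discretize that sphere by a net, control each resulting scalar Gaussian by a sub-Gaussian tail bound, and then union-bound over the net and over $j\in S^c$; the prescribed $\lambda_T$ in \eqref{eq: optimal lambda} is exactly what is needed to absorb the combinatorial factors that come out. Two structural facts drive the argument: (i) $\boldsymbol{\Gamma}_S=\mbf{I}-\bsPsi_S\bsPsi_S^\dagger$ is the orthogonal projector onto $\mathcal{R}(\bsPsi_S)^\perp$, so $\boldsymbol{\Gamma}_S=\boldsymbol{\Gamma}_S^\transpose=\boldsymbol{\Gamma}_S^2$ and $\norm{\boldsymbol{\Gamma}_S}_2\le 1$; and (ii) the group-normalization assumption (A1) gives $\norm{\bsJ_j}_2\le C\sqrt{T}$ for every $j$.

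First I would fix $j\in S^c$ and a unit vector $\mbf{w}\in\real^{N+1}$ and observe that $\langle\mbf{w},\bsJ_j^\transpose\boldsymbol{\Gamma}_S\mbf{v}\rangle=\langle\boldsymbol{\Gamma}_S\bsJ_j\mbf{w},\mbf{v}\rangle$ is, since $\mbf{v}\sim\mathcal{N}(\mbf{0},\sigma^2\mbf{I}_T)$, a centered Gaussian of variance $\sigma^2\norm{\boldsymbol{\Gamma}_S\bsJ_j\mbf{w}}_2^2\le\sigma^2\norm{\bsJ_j}_2^2\le\sigma^2C^2T$, using (i) and (A1); hence $\mathbb{P}[|\langle\mbf{w},\bsJ_j^\transpose\boldsymbol{\Gamma}_S\mbf{v}\rangle|\ge s]\le 2\exp(-s^2/(2\sigma^2C^2T))$. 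Taking a $\tfrac12$-net $\mathcal{A}$ of the unit sphere of $\real^{N+1}$ with $|\mathcal{A}|\le 5^{N+1}$, the standard covering estimate yields $\norm{\bsJ_j^\transpose\boldsymbol{\Gamma}_S\mbf{v}}_2\le 2\max_{\mbf{w}\in\mathcal{A}}|\langle\mbf{w},\bsJ_j^\transpose\boldsymbol{\Gamma}_S\mbf{v}\rangle|$. Combining these and union-bounding over the $5^{N+1}$ net points and the $m-m^*$ indices of $S^c$ gives, for every $s\ge0$,
\begin{equation*}
\mathbb{P}\Big[\max_{j\in S^c}\norm{\bsJ_j^\transpose\boldsymbol{\Gamma}_S\mbf{v}}_2\ge 2s\Big]\le 2\exp\Big(\log(m-m^*)+(N+1)\log 5-\tfrac{s^2}{2\sigma^2C^2T}\Big).
\end{equation*}

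Finally I would set $2s=0.5(1-\alpha)\lambda_TT$, so that the left-hand event is precisely $\{\max_{j\in S^c}\norm{\bsJ_j^\transpose\boldsymbol{\Gamma}_S(\mbf{v}/(\lambda_TT))}_2\ge 0.5(1-\alpha)\}$. Substituting $\lambda_T$ from \eqref{eq: optimal lambda} (with $c_1=\log 5$) gives $s=\sqrt{2}\,C\sigma(\sqrt{T(c_1(N+1)+\log(m-m^*))}+\delta T/2)$, whence $s^2/(2\sigma^2C^2T)\ge c_1(N+1)+\log(m-m^*)+\delta^2T/2$ by $(a+b)^2\ge a^2+b^2$ (the cross term supplying the remaining slack). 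This cancels the $\log(m-m^*)$ and $(N+1)\log 5$ contributions and leaves $\mathbb{P}[\cdots]\le 2\exp(-T\delta^2/2)$, the assertion of the lemma. (Alternatively one can skip the net and apply Gaussian Lipschitz concentration directly to $\mbf{v}\mapsto\norm{\bsJ_j^\transpose\boldsymbol{\Gamma}_S\mbf{v}}_2$, which has Lipschitz constant $\norm{\bsJ_j^\transpose\boldsymbol{\Gamma}_S}_2\le C\sqrt T$ and mean at most $\sigma\norm{\bsJ_j}_F\le\sigma C\sqrt{(N+1)T}$; this gives the same scaling.)

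I expect the control of the group norm $\norm{\bsJ_j^\transpose\boldsymbol{\Gamma}_S\mbf{v}}_2$ to be the only real obstacle: being a supremum of Gaussians over a sphere in $\real^{N+1}$ rather than a single sub-Gaussian scalar, it forces a discretization (or chaining/Lipschitz) step, and it is precisely this step that injects the $(N+1)\log 5$ term into the prescribed $\lambda_T$---the price of the $N+1$ free parameters per group. The remaining ingredients---the projector identities for $\boldsymbol{\Gamma}_S$, the variance bound via (A1), and matching constants against \eqref{eq: optimal lambda}---are routine.
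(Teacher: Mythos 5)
Your proof is correct and follows essentially the same route as the paper's: the paper simply packages your $\tfrac12$-net step into a standalone concentration bound (its Lemma~\ref{lma: sub-gaussian norm}, which supplies the $5^{N+1}$ factor and the $t^2/(8\|\boldsymbol{\Sigma}\|_2)$ exponent) and then applies it with the same covariance bound $\|\boldsymbol{\Sigma}_j\|_2\le C^2/(\lambda_T^2T)$ obtained from the projector property of $\boldsymbol{\Gamma}_S$ and (A1), the same union bound over $S^c$, and the same substitution of $\lambda_T$. The only quibble---shared with the paper's own "simplifying gives the required bound" step---is that $(a+b)^2\ge a^2+b^2$ alone yields $\exp(-T\delta^2/4)$, so reaching the stated $\exp(-T\delta^2/2)$ requires the cross term, which dominates the missing $\delta^2T/4$ only when $\delta$ is not too large relative to $\sqrt{(c_1(N+1)+\log(m-m^*))/T}$.
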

  \begin{proof}
  Let $\widetilde{\alpha}=0.5(1-\alpha)$ and take the union bound to get 
\begin{align}\label{eq: union bound one}
    \mathbb{P}\left[\max_{j \in S^c}\norm{\bsJ_{j}^\transpose\boldsymbol{\Gamma}_S\left(\frac{\mbf{v}}{\lambda_TT}\right)}_2\!\geq\!\widetilde{\alpha}\right]\nonumber\\
& \hspace{-21.0mm}\leq \sum_{j \in S^c}\mathbb{P}\left[\norm{\bsJ_{j}^\transpose\boldsymbol{\Gamma}_S\left(\frac{\mbf{v}}{\lambda_TT}\right)}_2\!\geq\!\widetilde{\alpha}\right]. 
\end{align}
  Since $\mbf{v}\sim \mathcal{N}(\mbf{0},\sigma^2\mbf{I})$, we have $\bsJ_{j}^\transpose\boldsymbol{\Gamma}_S\left({\mbf{v}}/{\lambda_TT}\right)\sim \mathcal{N}(\mbf{0},\boldsymbol{\Sigma}_j)$ with $\boldsymbol{\Sigma}_j=\bsJ_{j}^\transpose\boldsymbol{\Gamma}_S\boldsymbol{\Gamma}_S^\transpose\bsJ_{j}/(\lambda^2_TT^2)$. Furthermore, from the identity that $\|\mbf{X}\mbf{X}^\transpose\|_2=\|\mbf{X}\|_2^2$, the following is trivial. 
  \begin{align}
      \|\boldsymbol{\Sigma}_j\|_2=\frac{1}{\lambda_T^2T^2}\|\bsJ_{j}^\transpose\boldsymbol{\Gamma}_S\|_2^2\leq \frac{1}{\lambda_T^2T^2}\|\bsJ_{j}^\transpose\|_2^2\leq  \frac{C^2T}{\lambda_T^2T^2}. 
  \end{align}
  The first inequality follows because $\boldsymbol{\Gamma}_S$ is a projection matrix and for the last inequality from the normalization assumption (A1). Invoking Lemma \ref{lma: sub-gaussian norm}, we bound the inequality in \eqref{eq: union bound one} as 
  \begin{align*}
      \sum_{j \in S^c}\mathbb{P}\left[\norm{\bsJ_{j}^\transpose\boldsymbol{\Gamma}_S\left(\frac{\mbf{v}}{\lambda_TT}\right)}_2\!\geq\!\widetilde{\alpha}\right]\leq \sum_{j \in S^c}c_N\exp\left(-\frac{\wtilde{\alpha}^2\lambda_T^2T}{8\sigma^2C^2}\right),
  \end{align*}
  where $c_N=5^{N+1}$. Since each term in the summand is the same, the right-hand side can be expressed as
  \begin{align}\label{eq: exp_bound}
      \exp\left((N+1)\log(5)+\log(m-m_0)-\frac{\wtilde{\alpha}^2\lambda_T^2T}{8\sigma^2C^2}\right)
  \end{align}
  Substituting $\lambda_T$ (see \eqref{eq: optimal lambda}) and $\wtilde{\alpha}=0.5(1-\alpha)$ in \eqref{eq: exp_bound}, and simplifying it gives us the required bound.   
  \end{proof}
  
    \begin{lemma}\label{lma: min beta probabilitic bound} With the notation and assumptions stated in Theorem \ref{thm: location-consistency}, for $\delta \in [0,1)$, we have $\mathbb{P}[\|\boldsymbol{\Pi}_{S,[0:N-d]}\bsPsi_S^\dagger \mbf{v}\|_\infty\geq \sigma/\sqrt{c_\mathrm{min}}(\sqrt{2\log(t_S)/T}+\delta)]\leq 2\exp(-T\delta^2/2)$. 
  \end{lemma}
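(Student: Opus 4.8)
The plan is to combine the Gaussianity of $\mbf{v}$ with Assumption (A2) and a coordinate-wise union bound. First I would record that $\mbf{g}\triangleq\boldsymbol{\Pi}_{S,[0:N-d]}\bsPsi_S^\dagger\mbf{v}$ is a centered Gaussian vector in $\real^{t_S}$ with covariance $\boldsymbol{\Sigma}=\sigma^2\,\boldsymbol{\Pi}_{S,[0:N-d]}(\bsPsi_S^\transpose\bsPsi_S)^\dagger\boldsymbol{\Pi}_{S,[0:N-d]}^\transpose$, where I used the identity $\bsPsi_S^\dagger(\bsPsi_S^\dagger)^\transpose=(\bsPsi_S^\transpose\bsPsi_S)^\dagger$ \cite{AB-BS:03}. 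Each coordinate then obeys $g_i\sim\mathcal{N}(0,\boldsymbol{\Sigma}_{ii})$ with $\boldsymbol{\Sigma}_{ii}\leq\|\boldsymbol{\Sigma}\|_2=\sigma^2\|\boldsymbol{\Pi}_{S,[0:N-d]}\bsPsi_S^\dagger\|_2^2$, so the crux is to bound this spectral norm.

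For that bound I would use the structural fact that $\boldsymbol{\Pi}_{S,[0:N-d]}$ is obtained from $\wtilde{\boldsymbol{\Pi}}_{S,[0:N-d]}=[\mbf{I}_{n+t_S}\ \mbf{0}_{(n+t_S)\times dm^*}]$ of \eqref{eq: Pi selection above} by deleting its first $n$ rows; since deleting rows cannot increase the spectral norm, $\|\boldsymbol{\Pi}_{S,[0:N-d]}\bsPsi_S^\dagger\|_2\leq\|\wtilde{\boldsymbol{\Pi}}_{S,[0:N-d]}\bsPsi_S^\dagger\|_2$. The right-hand side was already computed in the proof of Theorem \ref{thm: l2-consistency}: with $\mbf{M}=\mbf{I}-\bsPsi_{S,[d-1:0]}\bsPsi_{S,[d-1:0]}^\dagger$ and using \cite[Lemma D]{AA-DSB:19}, Proposition \ref{prop: sub-matrix full rank}, and $\mbf{M}=\mbf{M}^2=\mbf{M}^\transpose$, one has $\|\wtilde{\boldsymbol{\Pi}}_{S,[0:N-d]}\bsPsi_S^\dagger\|_2^2=\|(\bsPsi_{S,[N:d]}^\transpose\mbf{M}\bsPsi_{S,[N:d]})^\dagger\|_2\leq 1/(Tc_\mathrm{min})$ by Assumption (A2). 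Hence $\boldsymbol{\Sigma}_{ii}\leq\sigma^2/(Tc_\mathrm{min})$ for every $i$. Then I would apply the scalar Gaussian tail $\mathbb{P}[|g_i|\geq s]\leq 2\exp(-s^2 Tc_\mathrm{min}/(2\sigma^2))$ together with a union bound over $i\in\{1,\dots,t_S\}$ to get $\mathbb{P}[\|\mbf{g}\|_\infty\geq s]\leq 2t_S\exp(-s^2 Tc_\mathrm{min}/(2\sigma^2))$, and finally set $s=\frac{\sigma}{\sqrt{c_\mathrm{min}}}\bigl(\sqrt{2\log(t_S)/T}+\delta\bigr)$; using $(\sqrt{a}+\delta)^2\geq a+\delta^2$ the exponent becomes at least $\log t_S+T\delta^2/2$, which cancels the factor $t_S$ and leaves exactly $2\exp(-T\delta^2/2)$.

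I expect the only genuine obstacle to be this spectral-norm step: because $\bsPsi_S$ may be rank deficient one must work with $\bsPsi_S^\dagger$ throughout and carefully track the pseudoinverse identities so as to see that the relevant principal block of $(\bsPsi_S^\transpose\bsPsi_S)^\dagger$ is precisely the object controlled by $c_\mathrm{min}$ in Assumption (A2). Since this is the same bookkeeping already carried out for Theorem \ref{thm: l2-consistency}, the lemma then follows by importing that estimate and applying a textbook Gaussian union bound; the rest is routine.
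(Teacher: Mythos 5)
Your proposal is correct and follows essentially the same route as the paper: coordinate-wise Gaussian tail bounds plus a union bound over the $t_S$ entries, with the variance of each coordinate controlled by $\sigma^2/(Tc_\mathrm{min})$ via the submatrix relation between $\boldsymbol{\Pi}_{S,[0:N-d]}$ and $\wtilde{\boldsymbol{\Pi}}_{S,[0:N-d]}$ (the paper phrases this as singular-value interlacing for $\boldsymbol{\Pi}_{S}(\bsPsi_S^\transpose\bsPsi_S)^\dagger\boldsymbol{\Pi}_{S}^\transpose$ versus your row-deletion argument, which is the same fact) and the spectral bound already established in the proof of Theorem \ref{thm: l2-consistency}. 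The final choice of threshold and the inequality $(\sqrt{a}+\delta)^2\geq a+\delta^2$ to absorb the $t_S$ factor also mirror the paper's computation.
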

  \begin{proof}
Recall that $\boldsymbol{\Pi}_{S,[0:N-d]}=[\mbf{0}_{t_S\times n} \,\mbf{I}_{t_S\times t_S}\,\mbf{0}_{t_{S}\times dm^*}]$ and $t_{S}=(N-d+1)m^*$. We drop the sub-script notation $[0:N-d]$ in $\boldsymbol{\Pi}_{S,[0:N-d]}$. Let $z_l=\mbf{e}_l^\transpose\boldsymbol{\Pi}_{S}\bsPsi_S^\dagger \mbf{v}$ be the $l^{th}$ entry of the vector $\boldsymbol{\Pi}_{S}\bsPsi_S^\dagger \mbf{v}$, where $\mbf{e}_l$ is the $l^{th}$ canonical basis vector in $ \mathbb{R}^{t_S}$. Thus $\|\boldsymbol{\Pi}_{S,[0:N-d]}\bsPsi_S^\dagger \mbf{v}\|_\infty=\max_{l \in 1\ldots t_S} |z_l|$ and for $\kappa \geq 0$ by invoking union bound we have 
\begin{align}\label{eq: union bound}
    \mathbb{P}[\max_{l \in 1\ldots t_S} |z_l|\geq \kappa]\leq \sum_{l=1}^{t_S} \mathbb{P}[|z_l|\geq \kappa]. 
\end{align}

We bound terms on the right-hand side by invoking standard concentration results. We first characterize the distribution of $z_l$. Since $\mbf{v}\sim \mathcal{N}(\mbf{0},\sigma^2\mbf{I})$, we have $z_l\sim \mathcal{N}(\mbf{0},{\sigma}^2_l)$, where
\begin{align}\label{eq: sigma_l}
    \sigma^2_l=\sigma^2\mbf{e}_l^\transpose\boldsymbol{\Pi}_{S}\bsPsi_S^\dagger(\boldsymbol{\Pi}_{S}\bsPsi_S^\dagger)^\transpose\mbf{e}_l&\leq \sigma^2\lambda_\text{max}(\boldsymbol{\Pi}_{S}\bsPsi_S^\dagger(\boldsymbol{\Pi}_{S}\bsPsi_S^\dagger)^\transpose)\nonumber\\
    &=\sigma^2\|\boldsymbol{\Pi}_{S}(\bsPsi_{{S}}^\transpose\bsPsi_{{S}})^\dagger\boldsymbol{\Pi}_{S}^\transpose\|_2\nonumber \\
    &\leq \sigma^2\|\widetilde{\boldsymbol{\Pi}}_S(\bsPsi_{{S}}^\transpose\bsPsi_{{S}})^\dagger\widetilde{\boldsymbol{\Pi}}_S^\transpose\|_2\nonumber \\
    & \leq \sigma^2/(Tc_\text{min}). 
\end{align}
where $\widetilde{\boldsymbol{\Pi}}_S=[\mbf{I}_{n+t_S} \, \mbf{0}_{(n+t_S)\times dm^*}]$. The second inequality follows from interlacing property of singular values. The final inequality is showed in the proof of Theorem \ref{thm: l2-consistency}. 

Since $z_l$ is Gaussian, from \cite[page 22]{MJW:19} and \eqref{eq: sigma_l}, we have $\mathbb{P}[|z_l|\geq \kappa]\leq \exp(-\kappa^2/(2\sigma^2_l))\leq \exp(-\kappa^2Tc_\text{min}/(2\sigma^2))$. Substituting this inequality in \eqref{eq: union bound}, we find that 
\begin{align}\label{eq: union bound}
    \mathbb{P}[\max_{l \in 1\ldots t_S} |z_l|\geq \kappa]\leq \exp\left(\log(t_S)-\frac{\kappa^2Tc_\mathrm{min}}{2\sigma^2}\right). 
\end{align}
The result follows by letting $\kappa=\sigma/\sqrt{c_\mathrm{min}}(\sqrt{2\log(t_S)/T}+\delta)$ and simplifying terms in the exponential term. 
  \end{proof}

\begin{lemma}\label{lma: sub-gaussian norm}
    Let $\mbf{p}\sim \mathcal{N}(\mbf{0},\boldsymbol{\Sigma})$, where $\boldsymbol{\Sigma}\in \mathbb{R}^{l\times l}$ is a positive definite matrix. Then, $\mathbb{P}[\|\mbf{p}\|_2\geq t]\leq 5^l\exp(-t^2/(8\|\boldsymbol{\Sigma}\|_2))$. Furthermore, $\|\mbf{p}\|_2\leq 4\sqrt{\|\boldsymbol{\Sigma}\|_2l}+2\sqrt{\|\boldsymbol{\Sigma}\|_2\log(1/\delta)}$ with 
     probability at least $1-\delta$ for $\delta (0,1)$.  
\end{lemma}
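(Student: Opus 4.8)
The plan is to handle the two claims with different tools: a covering-number / union-bound argument for the dimension-dependent tail bound, and the concentration inequality for Lipschitz functions of a Gaussian vector for the clean high-probability bound.

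For the tail bound, I would begin from the variational identity $\norm{\mbf{p}}_2 = \sup_{\mbf{u}\in\mathbb{S}^{l-1}}\langle\mbf{u},\mbf{p}\rangle$, where $\mathbb{S}^{l-1}$ is the Euclidean unit sphere in $\real^l$. Fix a $1/2$-net $\mcal{N}$ of $\mathbb{S}^{l-1}$; the standard volumetric estimate (the $\epsilon$-covering number of $\mathbb{S}^{l-1}$ is at most $(1+2/\epsilon)^l$, evaluated at $\epsilon=1/2$) gives $|\mcal{N}|\le 5^l$, and the net-comparison inequality gives $\norm{\mbf{p}}_2\le 2\max_{\mbf{u}\in\mcal{N}}\langle\mbf{u},\mbf{p}\rangle$. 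For each fixed $\mbf{u}$, $\langle\mbf{u},\mbf{p}\rangle$ is zero-mean Gaussian with variance $\mbf{u}^\transpose\boldsymbol{\Sigma}\mbf{u}\le\norm{\boldsymbol{\Sigma}}_2$, so $\mathbb{P}[\langle\mbf{u},\mbf{p}\rangle\ge s]\le\exp(-s^2/(2\norm{\boldsymbol{\Sigma}}_2))$ for $s\ge0$. Taking $s=t/2$ and a union bound over $\mcal{N}$ yields $\mathbb{P}[\norm{\mbf{p}}_2\ge t]\le|\mcal{N}|\exp(-(t/2)^2/(2\norm{\boldsymbol{\Sigma}}_2))\le 5^l\exp(-t^2/(8\norm{\boldsymbol{\Sigma}}_2))$, which is the first claim. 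The one subtlety is constant bookkeeping: the factor $2$ produced by the $1/2$-net is exactly what converts the Gaussian exponent $t^2/2$ into $t^2/8$.

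For the high-probability bound, note that inverting the tail bound of the first claim would leave an extra factor $\sqrt{8}>2$ on the $\sqrt{\log(1/\delta)}$ term, so I would instead invoke Gaussian concentration directly. Write $\mbf{p}=\boldsymbol{\Sigma}^{1/2}\mbf{g}$ with $\mbf{g}\sim\mcal{N}(\mbf{0},\mbf{I}_l)$; then $\mbf{g}\mapsto\norm{\boldsymbol{\Sigma}^{1/2}\mbf{g}}_2$ is Lipschitz with constant $\norm{\boldsymbol{\Sigma}^{1/2}}_2=\sqrt{\norm{\boldsymbol{\Sigma}}_2}$, and its mean obeys $\mathbb{E}\norm{\mbf{p}}_2\le(\mathbb{E}\norm{\mbf{p}}_2^2)^{1/2}=(\trace(\boldsymbol{\Sigma}))^{1/2}\le\sqrt{l\norm{\boldsymbol{\Sigma}}_2}$. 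The concentration inequality for Lipschitz functions of a standard Gaussian then gives $\mathbb{P}[\norm{\mbf{p}}_2\ge\mathbb{E}\norm{\mbf{p}}_2+r]\le\exp(-r^2/(2\norm{\boldsymbol{\Sigma}}_2))$; choosing $r=\sqrt{2\norm{\boldsymbol{\Sigma}}_2\log(1/\delta)}$ and combining with the mean bound yields, with probability at least $1-\delta$, $\norm{\mbf{p}}_2\le\sqrt{l\norm{\boldsymbol{\Sigma}}_2}+\sqrt{2\norm{\boldsymbol{\Sigma}}_2\log(1/\delta)}$, which implies the stated bound since $1\le4$ and $\sqrt{2}\le2$.

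Both steps use $\boldsymbol{\Sigma}$ only through $\boldsymbol{\Sigma}^{1/2}$ and through the scalar variances $\mbf{u}^\transpose\boldsymbol{\Sigma}\mbf{u}$, so the argument goes through verbatim when $\boldsymbol{\Sigma}$ is merely positive semidefinite — which is the form actually needed when the lemma is applied to matrices such as $\boldsymbol{\Sigma}_j=\bsJ_j^\transpose\boldsymbol{\Gamma}_S\boldsymbol{\Gamma}_S^\transpose\bsJ_j/(\lambda_T^2T^2)$ elsewhere in the appendix. There is no real technical obstacle; the only thing to get right is to recognize that the two parts call for two different estimates — the crude net/union bound cannot by itself deliver the constant $2$ in the second claim — and to track the numerical constants carefully.
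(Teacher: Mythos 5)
Your proposal is correct and matches the paper's approach: the paper simply cites Lemma 8.2 and Theorem 8.3 of the referenced lecture notes, and those results are established exactly as you do — a $1/2$-net union bound (whose cardinality $5^l$ and exponent $t^2/8$ are the fingerprints of that argument) for the tail bound, and Gaussian Lipschitz concentration plus $\mathbb{E}\|\mbf{p}\|_2\le\sqrt{\operatorname{tr}(\boldsymbol{\Sigma})}$ for the high-probability bound. Your observation that inverting the first bound would only give a constant $\sqrt{8}>2$ on the $\sqrt{\log(1/\delta)}$ term, so the second claim genuinely requires the separate concentration argument, is accurate.
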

\begin{proof}
    Follows from Lemma 8.2 and Theorem 8.3 in \cite{ARlecnotes2019}. 
\end{proof}
\end{document}